\theoremstyle{plain}
\newtheorem{theorem}{Theorem}[section]
\newtheorem{proposition}[theorem]{Proposition}
\newtheorem{lemma}[theorem]{Lemma}
\newtheorem{corollary}[theorem]{Corollary}
\theoremstyle{definition}
\theoremstyle{remark}
\renewcommand{\thefootnote}{\arabic{footnote}}
\newcommand{\C}[1]{\ensuremath{{\mathcal C}^{#1}}} 
\def\R{\mathbb R}
\def\C{\mathbb C}
\def\al{\alpha}
\def\Om{\Omega}
\def\de{\delta}
\def\De{\Delta} 
\def\si{\sigma}
\def\lam{\lambda}
\def\ep{\epsilon}
\def\na{\nabla}
\def\pa{\partial}
\def\lt{\left}
\def\rt{\right}
\def\o{\overline}
\def\mG{\mathcal{G}}
\def\i0i{\int_0^\infty}
\def\G{\mathbb G}
\def\mR{\mathcal R}
\numberwithin{equation}{section}
\title{The sharp higher order Hardy--Rellich type inequalities on the homogeneous groups}
\author{Van Hoang Nguyen
\footnote{
Institut de Math\'ematiques de Toulouse, Universit\'e Paul Sabatier, 118 Route de Narbonne, 31062 Toulouse c\'edex 09, France.}
}
\begin{document}
\maketitle


\renewcommand{\thefootnote}{}

\footnote{Email: \href{mailto: Van Hoang Nguyen <van-hoang.nguyen@math.univ-toulouse.fr>}{van-hoang.nguyen@math.univ-toulouse.fr}, \href{mailto: Van Hoang Nguyen <vanhoang0610@yahoo.com>}{vanhoang0610@yahoo.com}}

\footnote{2010 \emph{Mathematics Subject Classification\text}: 26D10, 43A85, 22E30, 43A80.}

\footnote{\emph{Key words and phrases\text}: weighted $L^2-$Hardy--Rellich type inequalities, weighted $L^p-$Hardy--Rellich type inequalities, critical Hardy--Rellich type inequalities, uncertainty principles, homogeneous groups}

\renewcommand{\thefootnote}{\arabic{footnote}}
\setcounter{footnote}{0}

\begin{abstract}
We prove several interesting equalities for the integrals of higher order derivatives on the homogeneous groups. As consequences, we obtain the sharp Hardy--Rellich type inequalities for higher order derivatives including both the subcritical and critical inequalities on the homogeneous groups. We also prove several uncertainty principles on the homogeneous groups. Our results seem to be new even in the case of Euclidean space $\R^n$ and give a simple proof of several classical Hardy--Rellich type inequalities in $\R^n$.
\end{abstract}

\section{Introduction}
The motivation of this paper is to prove several inequalities of Hardy--Rellich type in the setting of the homogeneous groups (the most general class of all nilpotent Lie groups) in the framework of equalities. Our obtained inequalities generalize several well-known Hardy--Rellich type inequalities in the Euclidean space $\R^n$. The Hardy--Rellich type inequalities in $\R^n$ involve the integrals of a function and its derivatives. They appear frequently in various branches of mathematics and provide a useful tool, e.g., in the theory and practice of differential equations, in the theory of approximation etc. Let us start by recalling the classical Hardy inequalities in $\R^n$, $n\geq 3$
\begin{equation}\label{eq:L2HardyE}
\int_{\R^n} |\na f|^2 dx \geq \lt(\frac{n-2}2\rt)^2 \int_{\R^n} \frac{|f|^2}{|x|^2} dx
\end{equation} 
for any function $f\in C_0^\infty(\R^n)$. The constant $(n-2)^2/4$ in \eqref{eq:L2HardyE} is sharp. A similar inequality with the same best constant holds if $\R^n$ is replaced by any domain $\Om$ containing the origin. This inequality plays an important role in many areas such as the spectral theory, the theory of partial differential equations associated to the Laplacian, see e.g., \cite{BEL15,BM97,Davies1998} for reviews of this subject. For interest readers, we refer to \cite{BV97,FT2002} for the improvements of \eqref{eq:L2HardyE} when $\R^n$ is replaced by the bounded domains $\Om$ containing the origin. The $L^p-$version of \eqref{eq:L2HardyE} takes the form
\begin{equation}\label{eq:LpHardyE}
\int_{\R^n} |\na f|^p dx \geq \lt(\frac{n-p}p\rt)^p \int_{\R^n} \frac{|f|^p}{|x|^p} dx,
\end{equation}
for any function $f\in C_0^\infty(\R _n)$ with $n\geq 2$ and $1 < p < n$. Again, the constant $(n-p)^p/p^p$ is sharp in \eqref{eq:LpHardyE}.

In the case $p =n$ the inequality \eqref{eq:LpHardyE} fails for any constant. However, in boundede domains, the following inequality holds
\begin{equation}\label{eq:Hardylog}
\int_{B} |\na f|^n dx \geq \lt(\frac{n-1}n\rt)^n\int_{B} \frac{|f|^n}{|x|^n (1+ \ln \frac1{|x|})^n} dx, \quad f \in C_0^\infty(B)
\end{equation}
for any $n\geq 2$, where $B$ denotes the unit centered ball in $\R^n$ (see, e.g., \cite{ET99}). The constant $(n-1)^n/n^n$ in \eqref{eq:Hardylog} is sharp. It was also shown in \cite{ET99} that \eqref{eq:Hardylog} is equivalent to the critical case of the Sobolev--Lorentz inequality. However, \eqref{eq:Hardylog} is not invariant under the scalings as \eqref{eq:L2HardyE} and \eqref{eq:LpHardyE}. In \cite{II2015}, Ioku and Ishiwata established a scaling version of \eqref{eq:Hardylog} as follows
\begin{equation}\label{eq:IokuIshi}
\lt(\frac{n-1}n\rt)^n\int_{B} \frac{|f|^n}{|x|^n |\ln \frac 1{|x|}|^n} dx \leq \int_{B} \lt|\frac x{|x|}\cdot\na f\rt|^n dx, \quad f\in C_0^\infty(B).
\end{equation}
The constant $(n-1)^n/n^n$ in \eqref{eq:IokuIshi} is sharp. The inequality \eqref{eq:IokuIshi} in bounded domains is also discussed in \cite{II2015}. It is surprise that the critical Hardy inequality \eqref{eq:IokuIshi} is equivalent to the subcritical Hardy inequality \eqref{eq:LpHardyE} in larger dimension spaces (see \cite{ST2017}). A global scaling invariant version of \eqref{eq:Hardylog} was proved in \cite{IIO2016} by Ioku, Ishiwata and Ozawa as follows
\begin{equation}\label{eq:criticalHE}
\lt(\frac{n-1}n\rt)^n\int_{\R^n} \frac{|f -f_R|^n}{|x|^n |\ln \frac R{|x|}|^n} dx \leq \int_{\R^n} \lt|\frac x{|x|}\cdot\na f\rt|^n dx,\quad f\in C_0^\infty(\R^n\setminus\{0\})
\end{equation}
for any $R >0$ with $f_R(x) = f(Rx/|x|)$. Indeed, \eqref{eq:criticalHE} was proved in \cite{IIO2016} with a sharp remainder. Again, the constant $(n-1)^n/ n^n$ in \eqref{eq:criticalHE} is sharp. Note that \eqref{eq:criticalHE} implies \eqref{eq:IokuIshi} by taking $R=1$ for any function supported in $B$.

The Rellich inequality states that for any function $f \in H^2(\R^n)$, $n\geq 5$ 
\begin{equation}\label{eq:RellichE}
\int_{\R^n} |\De f|^2 dx \geq \lt(\frac{n(n-4)}4\rt)^2 \int_{\R^n} \frac{|f|^2}{|x|^4} dx.
\end{equation}
The constant $n^2(n-4)^2/16$ is sharp in \eqref{eq:RellichE}. A similar inequality also holds true in $H^2_0(\Om)$ for any smooth domain $\Om$ of $\R^n$ containing the origin. This inequality was first proved by Rellich \cite{Rellich} for functions $f\in H^2_0(\Om)$ and then was extended for functions $f \in H^2(\Om)\cap H^1_0(\Om)$ by Dold et al. in \cite{DGLV1998}. Davies and Hinz \cite{DH1998} generalized \eqref{eq:RellichE} and shows that for any $p \in (1,n/2)$, it holds 
\begin{equation}\label{eq:LpRellichE}
\int_{\R^n} |\De f|^p dx \geq \lt(\frac{n(p-1)(n-2p)}{p^2}\rt)^p \int_{\R^n} \frac{|f|^p}{|x|^{2p}} dx,\quad f\in C_0^\infty(\R^n\setminus\{0\}).
\end{equation}
The constant $(n(p-1)(n-2p))^p/p^{2p}$ is the best constant in \eqref{eq:LpRellichE}. A weighted version with the sharp constant of \eqref{eq:LpRellichE} is also given in \cite{DH1998}. In \cite{AS2009,GGM2004,TZ2007}, the improvements of \eqref{eq:RellichE} in smooth bounded domains $\Om \subset \R^n$ containing the origin was proved by adding the nonnegative remainder terms. 

The Hardy and Rellich inequalities also was extended to the higher order derivatives with weights in \cite{DH1998}. Let $0 < k < n/2$ be an integer and a function $f \in H^k(\R^n)$. Then if $k=2m$
\begin{equation}\label{eq:higherRellichEeven}
\int_{\R^n} (\De^m f)^2 dx \geq \lt(\prod_{i=0}^{m-1}\frac{(n+4i)(n-4-4i)}4\rt)^2 \int_{\R^n} \frac{|f|^2}{|x|^{4m}}dx.
\end{equation}
If $k = 2m+1$, then 
\begin{equation}\label{eq:higherRellichEodd}
\int_{\R^n} (\nabla \De^m f)^2 dx \geq \lt(\frac{n-2}2\rt)^2\prod_{i=1}^{m-1}\lt(\frac{(n +2+4i)(n-6-4i)}4\rt)^2 \int_{\R^n} \frac{|f|^2}{|x|^{4m+2}}dx.
\end{equation}
Again, the inequalities \eqref{eq:higherRellichEeven} and \eqref{eq:higherRellichEodd} are sharp with the best constants. The improved versions of \eqref{eq:higherRellichEeven} and \eqref{eq:higherRellichEodd} in bounded smooth domains of $\R^n$ was proved by Tertikas and Zographopoulos \cite{TZ2007}. We next state the $L^p-$version with weights of \eqref{eq:higherRellichEeven} and \eqref{eq:higherRellichEodd} which was established in \cite{DH1998}. If $k=2m < n$, $m\geq 1$, $p\in (1, n/(2m))$ and $\al \in (-2(p-1)/p, (n-2mp)/p)$ then
\begin{equation}\label{eq:Lphighereven}
\int_{\R^n} \frac{|\De^m f|^p}{|x|^{\al p}} dx \geq \prod_{i=0}^{m-1} \lt(\frac{(n-2p-(2i+\al)p)(n+ p'(2i+\al))}{pp'}\rt)^p \int_{\R^n} \frac{|f|^p}{|x|^{\al p+ 2mp}} dx
\end{equation}
for any function $f\in C_0^\infty(\R^n\setminus\{0\})$, where $p' =p/(p-1)$ with $p > 1$. It was also prove that if $k = 2m + 1 < n$, $m \geq 1$, $p \in (1, n/(2m+1))$ and $\al \in (-2/p', (n-(2m+1)p)/p)$, then
\begin{align}\label{eq:Lphigherodd}
\int_{\R^n} \frac{|\nabla \De^m f|^p}{|x|^{\al p}} dx &\geq \lt(\frac{n -p -\al}p\rt)^p\prod_{i=0}^{m-1} \lt(\frac{(n-2p-(2i+1+\al)p)(n+ p'(2i+1+\al))}{pp'}\rt)^p \times \notag\\
&\qquad \qquad\qquad\qquad\qquad \times \int_{\R^n} \frac{|f|^p}{|x|^{\al p+ (2m+1)p}} dx
\end{align}
for any function $f\in C_0^\infty(\R^n\setminus\{0\})$. Note that the constants in \eqref{eq:Lphighereven} and \eqref{eq:Lphigherodd} are sharp. Davies and Hinz proved the inequality \eqref{eq:Lphighereven} by iterating the sharp weighted Rellich inequality (i.e., a weighted version of \eqref{eq:LpRellichE}). The inequality \eqref{eq:Lphigherodd} follows from the sharp weighted Hardy inequality (i.e., a weighted version of \eqref{eq:LpHardyE}). We refer the reader to \cite{DH1998} for more detail on the proof of \eqref{eq:Lphighereven} and \eqref{eq:Lphigherodd}. In \cite{Mi}, Mitidieri proposed the simple approaches to prove these Hardy--Rellich type inequalities (including \eqref{eq:L2HardyE}, \eqref{eq:LpHardyE}, \eqref{eq:RellichE}, \eqref{eq:LpRellichE}, \eqref{eq:higherRellichEeven}, \eqref{eq:higherRellichEodd}, \eqref{eq:Lphighereven} and \eqref{eq:Lphigherodd}). The first approach is based on the divergence theorem by choosing a special vector field. The second approach is based on the Rellich-Pohozaev type identities \cite{Mi1}. 

In the critical case $p = n/k$, the critical Rellich type inequalities on bounded domains was proved by Adimurthi and Santra \cite{AS2009}. Let us recall their results here. Let $\Om$ be a bounded domain of $\R^n$ containing the origin. If $n=2p$ then we can find a $R >0$ such that
\begin{equation}\label{eq:ASLp}
\int_{\Om} |\De f|^p dx \geq \frac{(n-2)^{2p}}{n^p} \int_{\Om} \frac{|f|^p}{|x|^n \lt| \ln \frac{R}{|x|}\rt|^p} dx,\qquad f\in C_0^\infty(\Om\setminus\{0\}).
\end{equation} 
It is noted that the constant $(n-2)^{2p}/n^p$ is the best constant in \eqref{eq:ASLp}. In the case $n=4$, it was proved that there exists $R_0>0$ such that for any $R \geq R_0$, it holds
\begin{equation}\label{eq:ASL2}
\int_{\Om} |\De f|^2 dx \geq  \int_{\Om} \frac{|f|^2}{|x|^4 \lt| \ln \frac{R}{|x|}\rt|^2} dx,\qquad f\in C_0^\infty(\Om\setminus\{0\}).
\end{equation}
The inequality \eqref{eq:ASL2} is sharp (see also \cite{AGS2006}). Adimurthi and Santra \cite{AS2009} also proved the critical Rellich type inequalities in $H^k(\R^n)$ with $n=2k$. Let $B$ denote the unit ball centered at the origin in $\R^n$. We can find a $R >0$ such that 
\begin{equation}\label{eq:ASchan}
\int_{B} |\De^m f|^2 dx \geq \frac{n^2}{16}\lt(\frac1{2^{2m-2}}\prod_{i=0}^{m-2}(4i+2)(8m-4i-6)\rt)^2 \int_{B} \frac{|f|^2}{|x|^n \lt|\ln \frac R{|x|}\rt|^2} dx
\end{equation}
if $n =4m$, $m \geq 2$ and
\begin{equation}\label{eq:ASle}
\int_{B} |\na \De^m f|^2 dx \geq \frac{n^2}{16}\lt(\frac1{2^{2m}}\prod_{i=0}^{m-1}(4m-4i-2)(4m+4i+2)\rt)^2 \int_{B} \frac{|f|^2}{|x|^n \lt|\ln \frac R{|x|}\rt|^2} dx
\end{equation}
if $n =4m+ 2$, $m \geq 1$ for any radial function $f \in H^k_0(B)$. Again, the inequalities \eqref{eq:ASchan} and \eqref{eq:ASle} are sharp. Moreover, the stronger versions of \eqref{eq:ASLp}, \eqref{eq:ASchan} and \eqref{eq:ASle} with nonnegative remainder terms was proved in \cite{AS2009}.

Besides these Hardy--Rellich type inequalities, there is a similar Rellich inequality that connects first to second order derivatives. That is, for any $n \geq 5$ and for any function $f\in C_0^\infty(\R^n\setminus\{0\})$ we have
\begin{equation}\label{eq:RonetwoE}
\int_{\R^n} |\De f|^2 dx \geq \frac{n^2}{4} \int_{\R^n} \frac{|\na f|^2}{|x|^2} dx.
\end{equation}
The constant $N^2/4$ is the best constant in \eqref{eq:RonetwoE}. This inequality was proved in \cite{AGS2006} by Adimurthi, Grossi and Santra for radial functions. And its improvement was obtained by Tertilas and Zographopoulos in \cite{TZ2007} together with its weighted version. We also mention here that the $L^p-$version of \eqref{eq:RonetwoE} was proved by Adimurthi and Santra in \cite{AS2009} for only radial functions again.

These Hardy--Rellich type inequalities arise very naturally in the study of singular differential operators. They have been intensively analysed in many different settings. The sharp Hardy and Rellich inequalities was extended to the Riemannian manifolds in \cite{AS2006a,KO2009,KO2013,Grillo,DAD2014,Xia}. They were proved in the setting of the fractional Laplacians in \cite{FMT2013,FLS2008,FS2008,Nguyen2016,H1977,Yafaev} and references therein. The Hardy inequality was proved in the Heisenberg group for $p=2$ by Garofalo and Lanconelli \cite{GL} (see also D'Ambrosio \cite{D'ambrosio}) and for any $p\not=2$ by Niu, Zhang and Wang \cite{NZW}. Further extensions were made by Danielli, Garofalo and Phuc \cite{DGP} on groups of Heisenberg type, by Goldstein and Kombe \cite{GK} on polarisable groups, by Jin and Shen \cite{JS} and Lian \cite{Lian} on Carnot groups together with certain weighted versions. An unified approach to the weighted Hardy inequalities on Carnot groups was recently introduced in \cite{GKY2017}. The higher order Hardy inequalities on the stratified Lie groups was obtained by Ciatti, Cowling and Ricci \cite{CCR} for arbitrary homogeneous quasi-norm but without the sharp constant. For the horizontal Hardy, Rellich, Caffarelli--Kohn--Nirenberg inequalities on stratified groups, we refer the reader to the paper of Ruzhansky and Suragan \cite{RS2017carnot}.


Recently, there is an enourmous work on studying the sharp Hardy and sharp Rellich inequalities on the homogeneous groups (see, e.g., \cite{RS2017,RS2016,RS2017L2,RSY2016,ORS2016,RSY2017,RSY2017a,RSY2017b}). Note that the abelian groups $(\R^n,+)$, the Heisenberg group, homogeneous Carnot groups, straified Lie groups and graded Lie groups are all special cases of the homogeneous groups. Before introducing the recent inequalities obtained on the homogeneous groups, let us remark that the Hardy inequalities \eqref{eq:L2HardyE} and \eqref{eq:LpHardyE} can be sharpned to the inequality
\begin{equation}\label{eq:IIOHardy}
\lt(\frac{n-p}p\rt)^p\int_{\R^n} \frac{|f|^p}{|x|^p}dx \leq \int_{\R^n} \lt|\frac x{|x|}\cdot \na f\rt|^p dx,\quad 1 < p < n.
\end{equation}
The remainder terms for \eqref{eq:IIOHardy} have been analysed by Ioku, Ishiwata and Ozawa \cite{IIO2017}, see also Machihara, Ozawa and Wadade \cite{MOW2017}. An extension of \eqref{eq:IIOHardy} have been extended to the homogeneous groups by Ruzhansky and Suragan \cite{RS2017}. Let $\mathbb G$ be a homogeneous groups of homogeneous dimension $Q$, and let $|\cdot|$ be any homogeneous quasi-norm on $\mathbb G$ (we refer the reader to Section $2$ for further details related to the homogeneous group). Let us define the radial operator $\mathcal R = \mathcal R_{|\cdot|}$ with respect to $|\cdot|$ by
\begin{equation}\label{eq:mathR}
\mathcal R:= \mathcal R_{|\cdot|} = \frac{d}{d|x|}.
\end{equation}
Clearly, when $\G$ is abelian group $(\R^n, +)$ and $|\cdot|$ is Euclidean norm on $\R^n$ then $\mathcal R$ is exactly the radian derivative $\pa_r:=\frac x{|x|} \cdot \na $. In \cite{RS2017}, the following generalized $L^p-$ Hardy inequality was proved
\begin{equation}\label{eq:RSHardy}
\lt(\frac{Q-p}p\rt)^p \int_{\G} \frac{|f|^p}{|x|^p} dx \leq \int_{\G} |\mR f|^p dx, \quad 1 < p < Q,
\end{equation}
for any complex-valued function $f \in C^\infty_0(\G\setminus\{0\})$, where $dx$ denotes the Haar measure on $\G$. The constant $(Q-p)^p/p^p$ in \eqref{eq:RSHardy} is sharp for any quasi-norm $|\cdot|$. Moreover, Ruzhansky and Suragan also obtained the remainder terms in \eqref{eq:RSHardy} and proved an uncertainty principle on $\mG$ from \eqref{eq:RSHardy}. The inequality in the critical case $p=Q$ of \eqref{eq:RSHardy} was discussed in \cite{RS2016}
\begin{equation}\label{eq:criticalGRS}
\lt(\frac{p-1}p\rt)^p \sup_{R>0} \int_{\G} \frac{|f-f_R|^p}{|x|^Q \lt|\ln \frac R{|x|}\rt|^p} dx \leq \int_{\G} \frac{|\mR f|^p}{|x|^{Q-p}} dx,
\end{equation}
for any $1 < p < \infty$, where $f_R = f(Rx/|x|)$. The inequality \eqref{eq:criticalGRS} generalizes the inequality \eqref{eq:criticalHE} to the homogeneous groups for any homogeneous quasi-norm. 

In \cite{RS2017}, Ruzhansky and Suragan also proved a Rellich inequality on $\G$. Denote 
\begin{equation}\label{eq:R2a}
\mR_2 = \mR^2 + \frac{Q-1}{|x|} \mR.
\end{equation}
Then the following inequality holds
\begin{equation}\label{eq:RellichGRS}
\frac{Q^2(Q-4)^2}{16} \int_{\G} \frac{|f|^2}{|x|^4} dx \leq \int_{\G} |\mR_2 f|^2 dx,
\end{equation}
for any complex-valued function $f \in C^\infty_0(\G\setminus\{0\})$. The constant $Q^2(Q-4)^2/16$ is the best constant in \eqref{eq:RellichGRS} for any homogeneous quasi-norm $|\cdot|$. The appearance of $\mathcal R_2$ in \eqref{eq:RellichGRS} is nature since there is no analogue of homogeneous Laplacian or sub-Laplacian on the general homogeneous groups to formulate a version similar to \eqref{eq:RellichE}. In fact, there may be no homogeneous hypoelliptic left-invariant differential operators at all: the existence of such an operator would imply that the group must be graded as was shown by Miller \cite{Miller} with further corrections by ter Elst and Robinson \cite{ER} (see also \cite[Proposition $4.1.3$]{FR} for a simple proof). Obviously, in the abelian case with the Euclidean norm, $\mR_2$ is nothing the radial Laplacian $\De_r = \pa_r^2 + \frac{n-1}{|x|}\pa_r$. Hence, \eqref{eq:RellichGRS} implies the following inequality in $\R^n$
\begin{equation}\label{eq:RellichEstrong}
\frac{n^2(n-4)^2}{16} \int_{\R^n} \frac{|f|^2}{|x|^4} dx \leq \int_{\R^n} |\De_r f|^2 dx,\quad n\geq 5.
\end{equation}
The inequality \eqref{eq:RellichEstrong} together with a recent result of Machihara, Ozawa and Wadade \cite{MOW2017} implies \eqref{eq:RellichE}. 

The higher order versions of \eqref{eq:RSHardy} in the case $p =2$ was also obtained in \cite{RS2017}. By iterating the weighted versions of \eqref{eq:RSHardy} with $p=2$, Ruzhansky and Suragan proved the following sharp weighted Hardy--Rellich type inequalities 
\begin{equation}\label{eq:RShigher}
\lt(\prod_{j=0}^{k-1} \lt|\frac{Q-2}2 -(\al + j)\rt|^2 \rt)\int_{\G} \frac{|f|^2}{|x|^{2k+2\al}} dx \leq \int_{\G} \frac{|\mR^k f|^2}{|x|^{2\al}} dx
\end{equation}
for any $Q\geq 3$, $k\geq 1$ and $\al \in \R$. The inequalities \eqref{eq:RellichGRS} and \eqref{eq:RShigher} are new even in the abelian case.

As mentioned above, the main aim of this paper is devoted to prove several inequalities of Hardy--Rellich type on the homogeneous groups. Our main results extended the inequality \eqref{eq:RellichGRS} in several ways. They give the generalizations of the Hardy--Rellich type inequalities on $\R^n$ mentioned above to the homogeneous groups with any homogeneous quasi-norm. First, we generalize it to any $p \in (1,Q/2)$ together with its weighted versions which is the content of Theorem \ref{identityRellich} and Theorem \ref{LpRellich} below. These obtained inequalities can be seen as the generalizations of the inequalities \eqref{eq:RellichE}, \eqref{eq:LpRellichE} and their weighted versions to the homogeneous groups with any homogeneous quasi-norm. Second, we extend it to the derivatives of higher order, i.e., we prove the Hardy--Rellich type inequality for $\mR_2^k$ and $\mR\mR_2^k$ for $k\geq 1$. This is the content of Theorem \ref{higherorderRellich} and Theorem \ref{eq:HRhigher}. These obtained inequalities also extend the inequalities \eqref{eq:higherRellichEeven}, \eqref{eq:higherRellichEodd}, \eqref{eq:Lphighereven} and \eqref{eq:Lphigherodd} to the homogeneous groups with any homogeneous quasi-norm. Thirst, we will prove the inequality \eqref{eq:RellichGRS} in the critical case, i.e., $p =Q/2$ and extend this critical inequality to the derivatives of higher order in Theorem \ref{criticalRellich} and Theorem \ref{criticalhigher} below respectively. Again, these are the extensions of the critical Rellich type inequalities \eqref{eq:ASLp}, \eqref{eq:ASL2}, \eqref{eq:ASchan} and \eqref{eq:ASle} to the homogeneous groups. Finally, we will generalize the inequality \eqref{eq:RonetwoE} together with its weighted versions and its $L^p-$versions to the homogeneous groups. These are the Rellich type inequalities obtained in Theorem \ref{onetwo} below for $p =2$ and Theorem \ref{onetwoLp} for any $1< p < Q/2$. By iterating these obtained inequalities, we will prove an extension of \eqref{eq:RonetwoE} to any order of derivatives in Theorem \ref{L2HRnew} and Theorem \ref{LpHRnew}. In application, we obtain, in Section $6$ below, several uncertainty principles on the homogeneous groups by using our obtained Hardy--Rellich type inequalities. It is worth to emphasize here that all the inequalities obtained in this paper are sharp and are derived from the corresponding equalities by dropping the nonnegative remainder terms. They are new even in the Euclidean space $\R^n$, and  give some new inequalities of Hardy--Rellich type both in the subcritical and critical cases in the setting of Euclidean space $\R^n$ (see Section $7$ below).


The rest of this paper is organized as follows. In Section $2$ we review briefly some basic properties of the homogeneous groups, fix the notation and recall the weighted $L^2-$Hardy inequalities on the homogeneous groups established by Ruzhansky and Suragan. In Section $3$ we prove some weighted $L^2-$Hardy--Rellich inequalities on the homogeneous groups. These inequalities include the weighted $L^2-$Rellich inequality (a weighted version \eqref{eq:RellichGRS}). We then use the weighted $L^2-$Hardy and weighted $L^2-$Rellich inequality to establish the weighted $L^2-$Hardy--Rellich type inequalities for higher order derivatives on the homogeneous groups. We also prove the Rellich type inequality that connects first to seconde order of derivatives and use it to derive the similar inequalities for any order of derivatives in this section. The $L^p-$versions of the Hardy--Rellich type iequalities obtained in Section $3$ (i.e., the weighted $L^p-$Hardy--Rellich type inequalities) will be established in Section $4$. The critical Hardy--Rellich type inequalities on the homogeneous groups are proved in Section $5$. In Section $6$, we obtain several new inequalities in the Euclidean space $\R^n$ by applying our results to this setting.

\section{Preliminaries}
In this section, we review briefly some basics of the analysis on homogeneous groups, some properties of the operator $\mR$ from \eqref{eq:mathR} and the weighted $L^2-$Hardy inequality on the homogeneous groups due to Ruzhansky and Suragan \cite{RS2017}. For the general background details on homogeneous groups, we refer the reader to the book of Folland and Stein \cite{FS1982} and the book of Fisher and Ruzhansky \cite{FR}.

We recall that a family of dilations of a Lie algebra $\mathfrak g$ is a family of linear mappings given by
\[
D_\lam = \exp(A \ln \lam) = \sum_{k=0}^\infty \frac1{k!} (A \ln \lam)^k,
\]
where $A$ is a diagonalisable linear operator on $\mathfrak g$ with positive egienvalues, and each $D_\lam$ is a morphism of the Lie algebra $\mathfrak g$, that is, a linear mapping from $\mathfrak g$ to itself which respects the Lie bracket:
\[
[D_\lam X, D_\lam Y] = D_\lam[X,Y],\quad \forall\, X, Y \in \mathfrak g,\, \lambda >0.
\]
A homogeneous groups is a simply connected Lie group whose Lie algebra is equipped with dilations. Homogeneous groups are necessarily nilpotent and hence the exponential mapping $\exp_{\mathbb G}: \mathfrak g \to \mathbb G$ is a global diffeomorphism. It induces a dilation structure on $\mathbb G$ which is still denoted by $D_\lam x$ or simply by $\lam x$ with $x \in \mathbb G$, i.e., 
\[
D_\lam x = \lam x: = \exp_{\mathbb G}(D_\lam (\exp_{\mathbb G}^{-1} x)).
\]
The homogeneous dimension of $\mathbb G$ is $\text{\rm Tr} A$ and is denoted by $Q$. 

Let $dx$ denote the Haar measure on $\mathbb G$ and let $|S|$ denote the corresponding volume of a measurable subset $S\subset \mathbb G$. Then we have
\[
|\lambda S| = \lam^Q |S|,\qquad \int_{\mathbb G}f(\lam x) dx = \lam^{-Q} \int_{\mathbb G} f(x) dx.
\]
Fix a basis $(X_1,\ldots,X_k)$ of $\mathfrak g$ such that $AX_k = \nu_k X_k$ for each $k$, so that $A$ has the form $A =\text{\rm diag}(\nu_1,\ldots,\nu_n)$ in this basis. Then each $X_k$ is homogeneous of degree $\nu_k$ and 
\[
Q =\nu_1 + \nu_2 + \cdots + \nu_n.
\]
The decomposition of $\exp_{\mathbb G}^{-1}(x)$ in the Lie algebra $\mathfrak g$ defines the vector
\[
e(x)= (e_1(x),\ldots,e_n(x))
\]
by the formula
\[
\exp_{\mathbb G}^{-1}(x) = e(x) \cdot \nabla = \sum_{i=1}^n e_i(x) X_i,
\]
where $\nabla =(X_1, \ldots,X_n)$. In the other word, we have
\[
x= \exp_{\mathbb G}\lt(\sum_{j=1}^n e_j(x) X_j\rt).
\]
By the homogeneity, we have
\[
r x = \exp_{\mathbb G} \lt(\sum_{j=1}^n r^{\nu_j} e_j(x) X_j\rt),
\]
that is, 
\[
e(rx) = (r^{\nu_1}e_1(x), \ldots, r^{\nu_n} e_n(x)).
\]
Consequently, we can calculate
\begin{align*}
\frac d{dr} (f(rx)) &= \frac d {dr} f\lt(\exp_{\mathbb G} \lt(\sum_{j=1}^n r^{\nu_j} e_j(x) X_j\rt)\rt)\\
&=\lt(\sum_{j=1}^n \nu_j r^{\nu_j-1} e_j(x) X_j\rt)f (rx).
\end{align*}
This implies the equality
\[
\frac d{dr}( f(rx)) = (\mathcal R f)(rx).
\]
In other words, the operator $\mR$ plays the role of the radial derivative on $\G$. Note that $\mR$ is positively homogeneous of order $-1$.

A homogeneous quasi-norm on a homogeneous group $\G$ is a continuous nonnegative function $\G\ni x \to |x|\in [0,\infty)$ satisfying the following conditions:
\begin{itemize}
\item $|x^{-1}| = |x|$ for any $x \in \G$,
\item $|\lam x| = \lam |x|$ for any $\lam >0$ and $x\in \G$,
\item $|x| =0$ if and only if $x =0$.
\end{itemize}

Let $|\cdot|$ be a quasi-norm on $\mathbb G$ and let $\mathfrak S$ denote the quasi-unit sphere with respect to $|\cdot|$, i.e.,
\[
\mathfrak S = \{x\in \mathbb G\, :\, |x| =1\}.
\]
It is well-known that there is a unique positive Borel measure $\sigma$ on $\mathfrak S$ such that for any function $f\in L^1(\G)$ we have
\begin{equation}\label{eq:polar}
\int_{\mathbb G} f(x) dx = \int_0^\infty \int_{\mathfrak S} f(ry) r^{Q-1} d\sigma(y) dr.
\end{equation}
We refer the reader the book of Folland and Stein \cite{FS1982} for the proof (see also \cite[Section $3.1.7$]{FR}).

In our analysis below, the following result whose proof can be found in \cite[Lemma $2.1$]{RS2017} plays an important role.
\begin{lemma}\label{homo}
Define the Euler's operator $\mathbb E$ on $\G$ by $\mathbb E = |x| \mR$. If $f: \mG\setminus\{0\} \to \R$ is continuously differentiable, then
\[
\mathbb Ef = \nu f\, \quad\text{\rm if and onlu if}\quad\, f(\lam x) = \lam^\nu f(x),\quad \forall\, \lam >0, x\not=0,
\]
i.e., $f$ is positively homogeneous of order $\nu$.
\end{lemma}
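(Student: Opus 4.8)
The plan is to reduce the statement to a one-variable ordinary differential equation along each dilation orbit $r\mapsto rx$. Fix $x\in\G\setminus\{0\}$ and set $g(r)=f(rx)$ for $r>0$; since $f$ is $C^1$ and $r\mapsto rx$ is smooth, $g$ is $C^1$ on $(0,\infty)$. The first step, and the one I expect to require the most care, is to establish the bridge identity
\[
(\mathbb E f)(rx) = r\, g'(r), \qquad r>0 .
\]
This follows from the chain-rule relation $\frac{d}{dr}f(rx)=(\mR f)(rx)$ derived above together with the homogeneity $|rx|=r|x|$ of the quasi-norm: by definition $\mathbb E=|x|\mR$, so evaluating the operator at the point $rx$ gives $(\mathbb E f)(rx)=|rx|\,(\mR f)(rx)=r|x|\,(\mR f)(rx)$, and combining this with $g'(r)=\frac{d}{dr}f(rx)$ yields the claim. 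Keeping track of the factor $|x|$ arising from $\frac{d}{dr}|rx|$ against $|rx|=r|x|$ is the only genuinely delicate point; everything afterwards is elementary.

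For the direction $(\Leftarrow)$, suppose $f(\lam x)=\lam^\nu f(x)$ for all $\lam>0$ and $x\neq 0$. Then $g(r)=r^\nu f(x)$, so $r\,g'(r)=\nu r^\nu f(x)=\nu g(r)$. By the bridge identity this reads $(\mathbb E f)(rx)=\nu f(rx)$; taking $r=1$ and letting $x$ range over $\G\setminus\{0\}$ gives $\mathbb E f=\nu f$ on all of $\G\setminus\{0\}$.

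For the direction $(\Rightarrow)$, suppose $\mathbb E f=\nu f$. Applying this at the point $rx$ and using the bridge identity gives the linear ODE $r\,g'(r)=\nu g(r)$ for every $r>0$. To integrate it cleanly and avoid dividing by $g$, I would introduce $h(r)=r^{-\nu}g(r)$ and compute $h'(r)=r^{-\nu-1}\big(r\,g'(r)-\nu g(r)\big)=0$, so that $h$ is constant on $(0,\infty)$. Evaluating at $r=1$ gives $h(r)\equiv g(1)=f(x)$, whence $g(r)=r^\nu f(x)$, i.e. $f(rx)=r^\nu f(x)$ for all $r>0$. Since $x\in\G\setminus\{0\}$ was arbitrary, $f$ is positively homogeneous of degree $\nu$, which completes the equivalence.
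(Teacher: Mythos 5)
Your overall strategy---reducing the statement to the Euler ODE $r\,g'(r)=\nu g(r)$ along each dilation ray and integrating it via $h(r)=r^{-\nu}g(r)$ to avoid dividing by $g$---is exactly the standard argument; note that the paper does not prove this lemma itself but cites Lemma $2.1$ of Ruzhansky--Suragan \cite{RS2017}, whose proof is the same computation written in polar coordinates. Both implications and the integration step are correct once the bridge identity is in place, so the only point to scrutinize is the bridge identity itself, which you rightly identified as the delicate step.

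There, however, your justification is internally inconsistent as written. You invoke the paper's displayed chain rule $\frac{d}{dr}f(rx)=(\mR f)(rx)$, i.e. $g'(r)=(\mR f)(rx)$, together with $(\mathbb E f)(rx)=|rx|\,(\mR f)(rx)=r|x|\,(\mR f)(rx)$; combining these two facts yields $(\mathbb E f)(rx)=r\,|x|\,g'(r)$, not the claimed $r\,g'(r)$---the factor $|x|$ you flagged is left unresolved, and if one ran the $(\Rightarrow)$ direction with it the ODE would become $r|x|g'(r)=\nu g(r)$, giving the wrong homogeneity $r^{\nu/|x|}$. The source of the trouble is that the paper's displayed identity is itself only valid for $|x|=1$: substituting $s=r|x|$ in $f(rx)=f\big(s\,\frac{x}{|x|}\big)$ shows the correct general statement is $\frac{d}{dr}f(rx)=|x|\,(\mR f)(rx)$ (test $\G=(\R^n,+)$ with $f(x)=|x|$: the left-hand side is $|x|$ while the paper's version would give $1$). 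With this corrected chain rule your bridge identity $(\mathbb E f)(rx)=r\,g'(r)$ is true and your proof closes as written. Alternatively, and most cleanly, fix $x=y\in\mathfrak S$ so that $|x|=1$, run your argument verbatim to obtain $f(ry)=r^\nu f(y)$ for all $r>0$ and $y\in\mathfrak S$, and recover full homogeneity by polar decomposition: for $x\neq 0$ and $y=x/|x|$ one has $f(\lam x)=f\big((\lam|x|)y\big)=(\lam|x|)^\nu f(y)=\lam^\nu f(x)$. Either one-line patch makes the proposal complete.
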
 

We conclude this section by recalling a weighted $L^2-$Hardy inequality on the homogeneous group due to Ruzhansky and Suragan (see \cite[Theorem $4.1$ and Corollary $4.2$]{RS2017}).
\begin{theorem}\label{identityHardy}
Let $\mathbb G$ be a homogeneous group of homogeneous dimension $Q\geq 3$ and let $|\cdot|$ be any homogeneous quasi-norm on $\mathbb G$. Then for any complex-valued function $f \in C^\infty_0(\mathbb G\setminus\{0\})$ we have
\begin{equation}\label{eq:identityHardy}
\int_{\mathbb G} \frac{|\mathcal R f|^2}{|x|^{2\al}} dx = \lt(\frac{Q-2-2\al}2\rt)^2 \int_{\mathbb G} \frac{|f|^2}{|x|^{2\al +2}} dx + \int_{\mathbb G} \lt|\frac{\mathcal R f}{|x|^{\al}} + \frac{Q-2-2\al}{2|x|^{\al +1}} f\rt|^2 dx
\end{equation}
for any $\al \in \R$. As a corollary, we have the following weighted $L^2-$Hardy inequality on $\G$,
\begin{equation}\label{eq:weightedHardy}
\lt(\frac{Q-2-2\al}2\rt)^2 \int_{\mathbb G} \frac{|f|^2}{|x|^{2\al +2}} dx\leq \int_{\mathbb G} \frac{|\mathcal R f|^2}{|x|^{2\al}} dx.
\end{equation}
The constant in \eqref{eq:weightedHardy} is sharp and it is attained if and only if $f =0$.
\end{theorem}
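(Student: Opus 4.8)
The plan is to establish the equality \eqref{eq:identityHardy} by a direct computation and then read off \eqref{eq:weightedHardy} by discarding the nonnegative square term. Write $c = (Q-2-2\al)/2$ for brevity. First I would expand the squared remainder appearing on the right-hand side of \eqref{eq:identityHardy}: since $\mR = d/d|x|$ is a real first-order operator, for complex-valued $f$ one has
\[
\lt|\frac{\mR f}{|x|^{\al}} + \frac{c}{|x|^{\al+1}} f\rt|^2 = \frac{|\mR f|^2}{|x|^{2\al}} + \frac{c^2}{|x|^{2\al+2}}|f|^2 + \frac{c}{|x|^{2\al+1}}\,\mR(|f|^2),
\]
where the cross term has been rewritten using $\mR(|f|^2) = (\mR f)\o f + f\,\o{\mR f} = 2\,\mathrm{Re}\big((\mR f)\o f\big)$. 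Substituting this into the right-hand side of \eqref{eq:identityHardy} and cancelling the common term $\int_{\G}|\mR f|^2|x|^{-2\al}\,dx$, the whole identity collapses to the single integration-by-parts statement
\[
\int_{\G} \frac{\mR(|f|^2)}{|x|^{2\al+1}}\,dx = -(Q-2-2\al)\int_{\G} \frac{|f|^2}{|x|^{2\al+2}}\,dx,
\]
which, once proved, settles \eqref{eq:identityHardy} for every $\al\in\R$ (including the degenerate value $\al=(Q-2)/2$, where both sides vanish).

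To prove this last identity I would pass to polar coordinates via \eqref{eq:polar}. Setting $g = |f|^2$ and using the defining property $(\mR g)(ry) = \frac{d}{dr} g(ry)$ established in Section $2$, the left-hand side becomes
\[
\int_{\mathfrak S}\int_0^\infty \frac{d}{dr}\big[g(ry)\big]\, r^{Q-2\al-2}\,dr\,d\sigma(y).
\]
Integrating by parts in the radial variable $r$ produces the factor $-(Q-2\al-2)$ multiplying $\int_{\mathfrak S}\int_0^\infty g(ry)\, r^{Q-2\al-3}\,dr\,d\sigma(y)$, and converting this back through \eqref{eq:polar} gives exactly the claimed right-hand side. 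The boundary term $g(ry)\,r^{Q-2\al-2}\big|_{0}^{\infty}$ vanishes because $f\in C_0^\infty(\G\setminus\{0\})$ forces $g(ry)$ to be zero for $r$ near $0$ and for all large $r$; this is the one place where the hypothesis on the support of $f$ is genuinely used, and it also legitimizes the Fubini/polar interchange, since all integrands are smooth and compactly supported in $r\in(0,\infty)$.

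The inequality \eqref{eq:weightedHardy} is then immediate: the last term in \eqref{eq:identityHardy} is a nonnegative integral, so dropping it yields the bound with constant $c^2=((Q-2-2\al)/2)^2$. For the equality statement, equality in \eqref{eq:weightedHardy} forces the remainder integral to vanish, hence $\mR f = -\tfrac{c}{|x|} f$ almost everywhere, i.e. $\mathbb E f = -c f$; by Lemma \ref{homo} this means $f$ is positively homogeneous of degree $-c$, and a nonzero homogeneous function is determined along each ray by a single value and so cannot be compactly supported away from the origin, forcing $f\equiv 0$. Sharpness of $c^2$ follows in the standard way by testing \eqref{eq:weightedHardy} against radial cutoffs of the profile $|x|^{-c}$ and letting the cutoff parameters degenerate, whereupon the ratio of the two sides tends to $c^2$. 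I expect the only delicate point to be precisely the vanishing of the radial boundary term together with the polar-coordinate interchange, both of which are controlled by the compact support of $f$ in $\G\setminus\{0\}$; everything else is a bookkeeping expansion of the square.
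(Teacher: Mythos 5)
Your proof is correct and is essentially the paper's approach: although Theorem \ref{identityHardy} itself is only recalled from \cite{RS2017}, your reduction of \eqref{eq:identityHardy} to the polar-coordinate integration by parts $\int_{\G}\mR(|f|^2)\,|x|^{-2\al-1}\,dx=-(Q-2-2\al)\int_{\G}|f|^2\,|x|^{-2\al-2}\,dx$ is exactly the $p=2$ case of the computation in the paper's proof of Theorem \ref{LpHardy}, merely run in reverse (expanding the square rather than completing it after integrating by parts). Your sharpness and non-attainment arguments likewise match the paper's: cutoff approximations of the profile $r^{-(Q-2-2\al)/2}$ and the homogeneity argument via Lemma \ref{homo}.
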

\section{Weighted $L^2-$Hardy--Rellich type inequalities}
Throughout this section, we use the notation 
\[
c_\al = \frac{(Q+2\al)(Q -4-2\al)}4
\]
for any $\al \in \R$, here $Q$ is the homogeneous dimension of a homogeneous group $\mG$. We start this section by proving an interesting equality which implies the weighted $L^2-$Rellich inequality \eqref{eq:RellichGRS} on $\G$. This can be seen as a weighted version of Theorem $5.1$ in \cite{RS2017}. Recall that 
\[
\mathcal R_2 f = \mR^2f + \frac{Q-1}{|x|} \mR f.
\]
We then have following results.

\begin{theorem}\label{identityRellich}
Let $\mathbb G$ be a homogeneous group of homogeneous dimension $Q\geq 5$ and let $|\cdot|$ be any homogeneous quasi-norm on $\mathbb G$. Then for any complex-valued function $f \in C^\infty_0(\mathbb G\setminus\{0\})$ we have
\begin{align}\label{eq:identityRellich}
\int_{\mathbb G} \frac{|\mathcal R_2 f|^2}{|x|^{2\al}} dx&= c_\al^2 \int_{\mathbb G} \frac{|f|^2}{|x|^{2\al +4}}dx + \int_{\mathbb G} \lt|\frac{\mathcal R_2 f}{|x|^{\al}} + c_\al \frac f{|x|^{\al +2}}\rt|^2 dx \notag\\
&\qquad\qquad\qquad  + 2 c_\al \int_{\mathbb G} \lt|\frac{\mathcal R f}{|x|^{1+\al}} + \frac{Q-4-2\al}{2 |x|^{2+ \al}} f\rt|^2 dx,
\end{align}
for any $\al \in \R$. The constant $c_\al^2$ before $\int_{\mathbb G} |x|^{-4 -2\al} |f|^2 dx$ in the right hand side of \eqref{eq:identityRellich} is sharp. As a consequence, we obtain the following weighted Rellich inequality for any complex-valued function $f \in C^\infty_0(\mathbb G\setminus\{0\})$
\begin{equation}\label{eq:weightedRellich}
\lt(\frac{(Q+2\al)(Q-4-2\al)}4\rt)^2\int_{\mathbb G} \frac{|f|^2}{|x|^{2\al +4}} dx \leq \int_{\mathbb G} \frac{\lt|\mathcal R_2 f \rt|^2}{|x|^{2\al}}  dx,
\end{equation}
for any $\al \in (-Q/2,(Q-4)/2)$. The constant in \eqref{eq:weightedRellich} is sharp and it is attained if and only if $f =0$.
\end{theorem}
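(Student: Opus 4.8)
The plan is to derive the equality \eqref{eq:identityRellich} by expanding the two squared remainder terms and reducing the whole statement to one scalar identity satisfied by $c_\al$, feeding in the Hardy equality \eqref{eq:identityHardy} and one integration by parts in the radial variable. When $c_\al=0$ the identity \eqref{eq:identityRellich} is trivial, since the two remainder integrals drop out and the first collapses onto the left-hand side; so I may assume $c_\al\neq0$. First I would expand $\int_{\mathbb G}\bigl|\,|x|^{-\al}\mR_2 f+c_\al|x|^{-\al-2}f\,\bigr|^2dx$ into $\int_{\mathbb G}|x|^{-2\al}|\mR_2 f|^2dx+c_\al^2\int_{\mathbb G}|x|^{-2\al-4}|f|^2dx+2c_\al\,\mathrm{Re}\int_{\mathbb G}|x|^{-2\al-2}(\mR_2 f)\bar f\,dx$. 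Substituting into \eqref{eq:identityRellich}, cancelling $\int_{\mathbb G}|x|^{-2\al}|\mR_2 f|^2dx$ and dividing by $2c_\al$, the identity becomes equivalent to
\[
c_\al\int_{\mathbb G}\frac{|f|^2}{|x|^{2\al+4}}dx+\mathrm{Re}\int_{\mathbb G}\frac{(\mR_2 f)\bar f}{|x|^{2\al+2}}dx+\int_{\mathbb G}\Bigl|\frac{\mR f}{|x|^{1+\al}}+\frac{Q-4-2\al}{2|x|^{2+\al}}f\Bigr|^2dx=0.
\]

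The key observation in the second step is that the last integral above is exactly the remainder appearing in the Hardy equality \eqref{eq:identityHardy} with $\al$ replaced by $\al+1$; Theorem \ref{identityHardy} therefore rewrites it as $\int_{\mathbb G}|x|^{-2\al-2}|\mR f|^2dx-\bigl(\tfrac{Q-4-2\al}{2}\bigr)^2\int_{\mathbb G}|x|^{-2\al-4}|f|^2dx$. For the middle term I would pass to polar coordinates via \eqref{eq:polar}, use that $\mR$ acts as $d/dr$ along each ray $r\mapsto ry$, and integrate by parts in $r$; all boundary terms vanish because $f\in C_0^\infty(\mathbb G\setminus\{0\})$. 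Splitting $\mR_2 f=\mR^2 f+(Q-1)|x|^{-1}\mR f$, the identity $\mathrm{Re}\bigl((\mR f)\bar f\bigr)=\tfrac12\mR(|f|^2)$ together with one integration by parts on the first-order piece and two on $\mathrm{Re}\int_{\mathbb G}|x|^{-2\al-2}(\mR^2 f)\bar f\,dx$ yields $\mathrm{Re}\int_{\mathbb G}|x|^{-2\al-2}(\mR_2 f)\bar f\,dx=-\int_{\mathbb G}|x|^{-2\al-2}|\mR f|^2dx-(1+\al)(Q-4-2\al)\int_{\mathbb G}|x|^{-2\al-4}|f|^2dx$.

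Combining the two computations, the $\int_{\mathbb G}|x|^{-2\al-2}|\mR f|^2dx$ contributions cancel and the displayed identity collapses to the purely numerical statement $c_\al=(1+\al)(Q-4-2\al)+\tfrac14(Q-4-2\al)^2$, which follows immediately by factoring $Q-4-2\al$ out of the right-hand side and comparing with $c_\al=\tfrac14(Q+2\al)(Q-4-2\al)$. This proves \eqref{eq:identityRellich}. The inequality \eqref{eq:weightedRellich} is then immediate: on $\al\in(-Q/2,(Q-4)/2)$ both $Q+2\al$ and $Q-4-2\al$ are positive, so $c_\al>0$ and the two remainder integrals in \eqref{eq:identityRellich} are nonnegative; dropping them gives \eqref{eq:weightedRellich}.

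For sharpness the guiding profile is $f_0=|x|^{-(Q-4-2\al)/2}$, which annihilates both remainders at once, since a direct computation gives $\mR f_0+\tfrac{Q-4-2\al}{2|x|}f_0=0$ and hence $\mR_2 f_0+c_\al|x|^{-2}f_0=0$; as $f_0$ is inadmissible I would test \eqref{eq:weightedRellich} on smooth truncations of $f_0$ with a logarithmic cutoff and check that the ratio of the two sides tends to $c_\al^2$. The rigidity statement follows because equality in \eqref{eq:weightedRellich} forces the first-order remainder to vanish, i.e. $\mathbb E f=-\tfrac{Q-4-2\al}{2}f$, so $f$ is positively homogeneous of degree $-(Q-4-2\al)/2$ by Lemma \ref{homo}, which is incompatible with compact support away from the origin unless $f\equiv0$. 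The step I expect to be most delicate is the second-order integration by parts for the complex-valued $f$: one must keep the real parts straight and verify that both boundary contributions at $0$ and $\infty$ genuinely vanish. The remaining subtlety is the sharpness sequence, where the logarithmic cutoff must be tuned so that both remainder integrals are of strictly lower order than the main terms as the truncation parameters degenerate.
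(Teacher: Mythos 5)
Your proposal is correct, and it is built from exactly the same two ingredients as the paper's proof---the weighted Hardy equality \eqref{eq:identityHardy} applied with $\al$ replaced by $\al+1$, and a radial integration by parts evaluating $\Re\int_{\G}(\mR_2 f)\overline f\,|x|^{-2\al-2}\,dx$---so it is a reorganization of the same computation rather than a different method, and likewise your sharpness family (truncations of $|x|^{-(Q-4-2\al)/2}$, which kills both remainders) and your rigidity argument via Lemma \ref{homo} coincide with the paper's. The difference is the direction of the argument: the paper derives \eqref{eq:identityRellich} forward, starting from $\int_{\G}|f|^2|x|^{-4-2\al}\,dx$ and integrating by parts twice in polar coordinates (writing $r^{Q-5-2\al}$ and then $r^{Q-4-2\al}$ as exact derivatives) before inserting the Hardy identity and completing the square, whereas you expand the two squared remainders and reduce the whole identity to the scalar relation $c_\al=(1+\al)(Q-4-2\al)+\tfrac14(Q-4-2\al)^2$, which indeed factors correctly since $(1+\al)+\tfrac14(Q-4-2\al)=\tfrac14(Q+2\al)$; I also checked your integration-by-parts constant, as the coefficient $(Q-3-2\al)-(Q-1)=-2(1+\al)$ gives precisely $-(1+\al)(Q-4-2\al)$. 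Your organization buys two small but genuine advantages: the paper's polar-coordinate steps divide by $Q-4-2\al$ and $Q-3-2\al$ (and later by $c_\al$), so strictly speaking it omits the parameter values where these vanish and would need a continuity-in-$\al$ patch, while your computation involves no such division and is valid for every $\al$; and you dispose of the degenerate case $c_\al=0$ explicitly, which the paper glosses over. Two minor points of difference in the endgame: for rigidity the paper excludes nonzero homogeneous extremizers via $L^2$-integrability of $f/|x|^{2+\al}$, while you invoke incompatibility of homogeneity with compact support in $\G\setminus\{0\}$---both are valid for $f\in C_0^\infty(\G\setminus\{0\})$---and your sharpness step is the one place where your write-up is thinner than the paper's, which uses the explicit two-sided cutoff $(1-\phi(r/\ep))\,r^{-(Q-4-2\al)/2}\,\phi(\ep r)$ so that the main terms diverge like $(-\ln\ep)\,\sigma(\mathfrak S)$ while all remainder integrals stay $O(1)$; your ``logarithmic cutoff'' plan is the same in spirit but should be made this concrete to actually conclude that the ratio tends to $c_\al^2$.
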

If $\al =0$, Theorem \ref{identityRellich} recovers Theorem $5.1$ and Corollary $5.2$ in \cite{RS2017}. The proof of Theorem \ref{identityRellich} follows the lines in the proof of Theorem $5.1$ in \cite{RS2017} (see also the proof of Theorem $1.1$ in \cite{MOW2017} for the Euclidean space $\R^n$).


\begin{proof}
Using the polar coordinate \eqref{eq:polar}, we have
\begin{align}\label{eq:R1}
\int_{\mathbb G} &\frac{|f|^2}{|x|^{4+2\al}} dx\notag\\
&= \int_0^\infty r^{Q-5-2\al} \int_{\mathfrak S} |f(r y)|^2 d\sigma(y) dr\notag\\
&=\frac{1}{Q-4-2\al} \int_0^\infty (r^{Q-4-2\al})'\int_{\mathfrak S} |f(r y)|^2 d\sigma(y) dr\notag\\
&= -\frac2{Q-4-2\al} \Re\int_0^\infty r^{Q-4-2\al} \int_{\mathfrak S} f(r y) \overline{\mathcal R f(ry)} d\si(y) dr\notag\\
&= -\frac2{(Q-4-2\al)(Q-3-2\al)} \Re\int_0^\infty (r^{Q-3-2\al})' \int_{\mathfrak S} f(r y) \overline{\mathcal R f(ry)} d\si(y) dr\notag\\
&=\frac2{(Q-4-2\al)(Q-3-2\al)} \Re\int_0^\infty r^{Q-3-2\al} \int_{\mathfrak S} \lt(|\mathcal R f(ry)|^2 + f(r y) \overline{\mathcal R^2 f(ry)} \rt)d\si(y)dr\notag\\
&=\frac2{(Q-4-2\al)(Q-3-2\al)} \Re \int_{\mathbb G} \lt(\frac{|\mathcal R f|^2}{|x|^{2+2\al}} + \frac{f \overline{\mathcal R^2 f}}{|x|^{2+ 2\al}}\rt) dx,
\end{align}
here $\Re z$ denotes the real part of a complex number $z\in \C$. From Theorem \ref{identityHardy}, we have
\begin{equation}\label{eq:R2}
\int_{\mathbb G} \frac{|\mathcal R f|^2}{|x|^{2+2\al}} dx = \frac{(Q-4-2\al)^2}4 \int_{\mathbb G} \frac{|f|^2}{|x|^{4+2\al}} dx + \int_{\mathbb G} \lt|\frac{\mathcal R f}{|x|^{1+\al}} + \frac{Q-4-2\al}{2|x|^{2+\al}} f\rt|^2 dx
\end{equation}
Using integration by parts, we have
\begin{align}\label{eq:R3}
\Re\int_{\mathbb G} \frac{f \overline{\mathcal R^2 f}}{|x|^{2+ 2\al}} dx &=\Re \int_{\mathbb G} \frac{f \, \overline{\mathcal R_2 f}}{|x|^{2+2\al}}  dx -(Q-1) \Re \int_{\mathbb G}\frac{f \overline{\mathcal R f}}{|x|^{3+ 2\al}} dx\notag\\
&= \Re \int_{\mathbb G} \frac{f \, \overline{\mathcal R_2 f}}{|x|^{2+2\al}}  dx + \frac{(Q-1)(Q-4-2\al)}2\int_{\mathbb G} \frac{|f|^2}{|x|^{4+ 2\al}} dx.
\end{align}
Plugging \eqref{eq:R2} and \eqref{eq:R3} into \eqref{eq:R1} implies
\begin{align*}
\int_{\mathbb G} \frac{|f|^2}{|x|^{4+ 2\al}} dx& = -\frac1{c_\al} \Re \int_{\mathbb G} \frac{f \, \overline{\mathcal R_2 f}}{|x|^{2+2\al}}  dx -\frac1{c_\al} \int_{\mathbb G} \lt|\frac{\mathcal R f}{|x|^{1+\al}} + \frac{Q-4-2\al}{2|x|^{2+\al}} f\rt|^2 dx\\
&=-\frac1{2 c_\al^2}\int_{\mathbb G} \int_{\mathbb G} \lt|\frac{\mathcal R_2 f}{|x|^{\al}} + c_\al \frac f{|x|^{\al +2}}\rt|^2 dx + \frac12 \int_{\mathbb G} \frac{|f|^2}{|x|^{4+ 2\al}} dx\\
&\quad + \frac1{2 c_\al^2}\int_{\mathbb G} \frac{|\mathcal R_2 f|^2}{|x|^{2\al}}  dx -\frac1{c_a} \int_{\mathbb G} \lt|\frac{\mathcal R f}{|x|^{1+\al}} + \frac{Q-4-2\al}{2|x|^{2+\al}} f\rt|^2 dx
\end{align*}
which implies our desired result \eqref{eq:identityRellich}.

It remains to verify the sharpness of $c_\al^2$ in \eqref{eq:identityRellich}. To do this, we will use the approximations of the function $r^{-(Q-4-2\al)/2}$ as follows. Let $\phi \in C_0^\infty(\R)$ such that $\phi =1$ on $(-1,1)$ and $\phi =0$ on $\R \setminus (-2,2)$. For any $\ep >0$, define $f_\ep(r) = (1-\phi(r/\ep)) r^{-\frac{Q-4-2\al}2} \phi(\ep r)$. Differentiating $f_\ep$ we get
\begin{align*}
\mathcal R f_\ep(r) &= -\frac1\ep \phi'(\ep^{-1} r) r^{-\frac{Q-4-2\al}2} \phi(\ep r) -\frac{Q-4-2\al}2 (1-\phi(\ep^{-1}r)) r^{-\frac{Q-2-2\al}2} \phi(\ep r)\\
&\qquad  + \ep (1-\phi(r/\ep)) r^{-\frac{Q-4-2\al}2} \phi'(\ep r),
\end{align*}
and
\begin{align*}
\mathcal R^2 f_\ep(r)& = -\frac1{\ep^2} \phi''(\ep^{-1} r) r^{-\frac{Q-4-2\al}2} \phi(\ep r)+ \ep^2 (1-\phi(\ep^{-1}r)) r^{-\frac{Q-4-2\al}2} \phi''(\ep r)\\
&\quad +\frac{Q-4-2\al}\ep \phi'(\ep^{-1}r)) r^{-\frac{Q-2-2\al}2} \phi(\ep r)- \ep (Q-4-2\al)(1-\phi(\ep^{-1}r)) r^{-\frac{Q-2-2\al}2} \phi'(\ep r)\\
&\quad + \frac{(Q-4-2\al)(Q-2-2\al)}4 (1-\phi(\ep^{-1}r)) r^{-\frac{Q-2\al}2} \phi(\ep r),
\end{align*}
here we use the supports of $\phi'(r/\ep)$ and $\phi'(\ep r)$ are disjoint for $\ep >0$ small enough. Thus, we obtain
\begin{align*}
\mathcal R_2 f_\ep(r)&= -\frac1{\ep^2} \phi''(\ep^{-1} r) r^{-\frac{Q-4-2\al}2} \phi(\ep r)+ \ep^2 (1-\phi(\ep^{-1}r)) r^{-\frac{Q-4-2\al}2} \phi''(\ep r)\\
&\quad -\frac{3+2\al}\ep \phi'(\ep^{-1}r)) r^{-\frac{Q-2-2\al}2} \phi(\ep r)+ \ep (3+2\al)(1-\phi(\ep^{-1}r)) r^{-\frac{Q-2-2\al}2} \phi'(\ep r)\\
&\quad -c_\al(1-\phi(\ep^{-1}r)) r^{-\frac{Q-2\al}2} \phi(\ep r).
\end{align*}
This implies
\[
\int_{\mathbb G} \frac{\lt|\mathcal R_2 f_\ep\rt|^2}{|x|^{2\al}}  dx = c_\al^2 (-\ln \ep) \sigma(\mathfrak S) + O(1).
\]
We can easily check that
\[
\int_{\mathbb G} \frac{|f_\ep|^2}{|x|^{4+ 2\al}} dx = (-\ln \ep) \sigma(\mathfrak S) + O(1),
\]
\[
\int_{\mathbb G} \lt|\frac{\mathcal R^2 f_\ep}{|x|^{\al}} + \frac{Q-1}{|x|^{1+\al}}\mathcal Rf_\ep + c_\al \frac {f_\ep} {|x|^{\al +2}}\rt|^2 dx = O(1),
\]
and
\[
\int_{\mathbb G} \lt|\frac{1}{|x|^{1+\al}}\mathcal R f_\ep + \frac{Q-4-2\al}{2 |x|^{2+ \al}} f_\ep\rt|^2 dx = O(1).
\]
These computations show that the constant $c_\al^2$ is sharp in \eqref{eq:identityRellich}.

The inequality \eqref{eq:weightedRellich} is trivial since $c_\al >0$ for $\al \in (-Q/2, (Q-4)/2)$. We next verify the sharpness of constant. For $\ep >0$, consider the function $f_\ep$ as above. We then have 
\[
\lim_{\ep\to 0} \frac{\int_{\mathbb G} \frac1{|x|^{2\al}} \lt|\mathcal R^2 f_\ep + \frac{Q-1}{|x|}\mathcal Rf_\ep \rt|^2 dx}{\int_{\mathbb G} \frac{|f_\ep|^2}{|x|^{4+ 2\al}} dx} = c_\al^2,
\]
which implies the sharpness of $c_\al^2$.

Suppose that there exists equality in \eqref{eq:weightedRellich} for some function $f$. From Theorem \ref{identityRellich}, we must have
\[
\mathcal R f + \frac{Q-4-2\al}{2|x|} f =0\quad \Leftrightarrow \quad\mathbb Ef = -\frac{Q-4-2\al}{2} f.
\]
Lemm \ref{homo} implies that $f$ is positively homogeneous of order $-(Q-4-2\al)/2$, i.e., there exists function $h: \mathfrak S \to \mathbb C$ such that $f(x) = |x|^{-(Q-4-2\al)/2} h(x/|x|)$. Since $f(x)/|x|^{2+\al}$ is in $L^2(\mathbb G)$, we then have $h =0$ on $\mathfrak S$ or equivalently $f =0$ on $\mG$.
\end{proof}

We continue by extending Theorem \ref{identityHardy} and Theorem \ref{identityRellich} to derivatives of higher order. This will be done in the following theorem.
 
\begin{theorem}\label{higherorderRellich}
Let $\mathbb G$ be a homogeneous group of homogeneous dimension $Q$, and let $|\cdot|$ be a quasi-norm on $\mathbb G$ and $k$ be a positive integer and $\al \in \R$. For any complex-valued function $f\in C_0^\infty(\mathbb G\setminus\{0\})$ we have
\begin{align}\label{eq:even}
&\lt(\prod_{i=0}^{k-1} c_{2i+ \al}\rt)^2\int_{\mathbb G} \frac{|f|^2}{|x|^{4k+2\al}} dx \notag\\
&=\int_{\mathbb G} \frac{|\mathcal R_2^k f|^2}{|x|^{2\al}} dx -\int_{\mathbb G} \frac1{|x|^{2\al}} \lt|\mathcal R_2^k f + c_\al \frac{\mathcal R_2^{k-1}f}{|x|^2}\rt|^2 dx \notag\\
&\quad -\sum_{j=1}^{k-1}\lt(\prod_{i=0}^{j-1} c_{2i+ \al}\rt)^2 \int_{\mathbb G} \frac1{|x|^{4j+ 2\al}} \lt|\mathcal R_2^{k-j}f + c_{2j+\al} \frac{\mathcal R_2^{k-j-1}f}{|x|^2}\rt|^2 dx\notag\\
&\quad -2c_\al \int_{\mathbb G} \frac1{|x|^{2+2\al}} \lt|\mathcal R (\mathcal R_2^{k-1}f) + \frac{Q-4-2\al}{2|x|} \mathcal R_2^{k-1} f\rt|^2 dx\notag\\
&\quad -2 \sum_{j=1}^{k-1} \lt(\prod_{i=0}^{j-1} c_{2i+ \al}\rt)^2 c_{2j+\al} \int_{\mathbb G} \frac1{|x|^{2+2\al + 4j}} \lt|\mathcal R (\mathcal R_2^{k-j-1}f)+ \frac{Q-4-2\al-4j}{2|x|} \mathcal R_2^{k-j-1} f\rt|^2 dx,
\end{align}
if $Q \geq 4k+1$, and
\begin{align}\label{eq:odd}
&\lt(\frac{Q-2-2\al}2 \prod_{i=0}^{k-1} c_{2i+1+ \al}\rt)^2\int_{\mathbb G}\frac{|f|^2}{|x|^{4k+2+ 2\al}} dx\notag\\
&=\int_{\mathbb G} \frac{|\mathcal R(\mathcal R_2^k f)|^2}{|x|^{2\al}} dx -\int_{\mathbb G} \frac1{|x|^{2\al}} \lt|\mathcal R(\mathcal R_2^k f) + \frac{Q-2-2\al}{2|x|} \mathcal R_2^{k}f \rt|^2 dx \notag\\
&\quad -\frac{(Q-2-2\al)^2}4 \int_{\mathbb G} \frac1{|x|^{2+2\al}} \lt|\mathcal R_2^k f + c_{1+\al} \frac{\mathcal R_2^{k-1}f}{|x|^2}\rt|^2 dx \notag\\
&\quad -\frac{(Q-2-2\al)^2}4\sum_{j=1}^{k-1} \lt(\prod_{i=0}^{j-1} c_{2i+ \al}\rt)^2\int_{\mathbb G} \frac1{|x|^{2+2\al+ 4j}} \lt|\mathcal R_2^{k-j} f + c_{2j+ 1+\al} \frac{\mathcal R_2^{k-j-1}f}{|x|^2}\rt|^2 dx \notag\\
&\quad -\frac{(Q-2-2\al)^2}2 c_{1+\al}\int_{\mathbb G} \frac1{|x|^{4+2\al}}\lt|\mathcal R(\mathcal R_2^{k-1} f) + \frac{Q-6-2\al}{2|x|} \mathcal R_2^{k-1} f\rt|^2 dx\notag\\
&\quad -\frac{(Q-2-2\al)^2}2\sum_{j=1}^{k-1}\lt(\prod_{i=0}^{j-1} c_{2i+ 1+\al}\rt)^2 c_{2j+1+\al} \times\notag\\
&\qquad\qquad\qquad\qquad \times \int_{\mathbb G} \frac1{|x|^{4+2\al+4j}}\lt|\mathcal R(\mathcal R_2^{k-j-1} f) + \frac{Q-6-2\al-4j}{2|x|} \mathcal R_2^{k-j-1} f\rt|^2 dx
\end{align}
if $Q \geq 4k + 3$.

As consequence, we following weighted Hardy--Rellich type inequalities for any complex-valued function $f\in C_0^\infty(\mathbb G\setminus\{0\})$
\begin{equation}\label{eq:HReven}
\lt(\prod_{i=0}^{k-1} c_{2i+ \al}\rt)^2\int_{\mathbb G} \frac{|f|^2}{|x|^{4k+2\al}} dx \leq \int_{\mathbb G} \frac{|\mathcal R_2^k f|^2}{|x|^{2\al}} dx
\end{equation}
if $Q \geq 4k+1$ and $\al \in (-Q/2,(Q-4k)/2)$, and
\begin{equation}\label{eq:HRodd}
\lt(\frac{Q-2-2\al}2 \prod_{i=0}^{k-1} c_{2i+1+ \al}\rt)^2\int_{\mathbb G}\frac{|f|^2}{|x|^{4k+2+2\al}} dx \leq \int_{\mathbb G} \frac{|\mathcal R(\mathcal R_2^k f)|^2}{|x|^{2\al}} dx
\end{equation}
if $Q \geq 4k+3$, $k\geq 1$ and $\al \in (-(Q+2)/2, (Q-4k-2)/2)$. Moreover, these inequalities are sharp and the equality holds if and only if $f =0$.
\end{theorem}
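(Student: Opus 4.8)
The plan is to derive both equalities by telescoping the two base identities already at hand — the weighted Hardy identity \eqref{eq:identityHardy} and the weighted Rellich identity \eqref{eq:identityRellich} — and then to read off the inequalities by discarding the manifestly nonnegative remainder integrals. The argument is an induction on $k$ realised as a chain of weight-shifted applications; the base case $k=1$ of \eqref{eq:even} is exactly \eqref{eq:identityRellich} after rearrangement.

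For the even identity \eqref{eq:even} I would iterate \eqref{eq:identityRellich}. At the $j$-th stage, for $j=0,1,\dots,k-1$, I apply \eqref{eq:identityRellich} to the function $\mathcal R_2^{k-j-1}f$ with the weight parameter $\al$ replaced by $\al+2j$; since $c_{\al+2j}=c_{2j+\al}$, this converts $\int_{\mathbb G}|x|^{-2(\al+2j)}|\mathcal R_2^{k-j}f|^2\,dx$ into $c_{2j+\al}^2\int_{\mathbb G}|x|^{-2(\al+2j)-4}|\mathcal R_2^{k-j-1}f|^2\,dx$ plus the two characteristic nonnegative Rellich remainders. Chaining these $k$ relations, the leading constants multiply to $\bigl(\prod_{i=0}^{k-1}c_{2i+\al}\bigr)^2$, while the remainder produced at stage $j$ is carried to the end weighted by the accumulated factor $\bigl(\prod_{i=0}^{j-1}c_{2i+\al}\bigr)^2$. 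Collecting these weighted remainders reproduces the two families of correction integrals displayed in \eqref{eq:even}.

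For the odd identity \eqref{eq:odd} I would first peel off the outermost $\mathcal R$ by applying the Hardy identity \eqref{eq:identityHardy} to $\mathcal R_2^k f$ with weight $\al$; this yields the factor $\bigl((Q-2-2\al)/2\bigr)^2$ in front of $\int_{\mathbb G}|x|^{-2\al-2}|\mathcal R_2^k f|^2\,dx$ together with the single Hardy remainder. To that remaining integral I then apply the already-proved even identity \eqref{eq:even} with $\al$ replaced by $\al+1$, which contributes the product $\bigl(\prod_{i=0}^{k-1}c_{2i+1+\al}\bigr)^2$ and, after multiplication by $\bigl((Q-2-2\al)/2\bigr)^2$, the whole list of remainders recorded in \eqref{eq:odd}. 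The inequalities \eqref{eq:HReven} and \eqref{eq:HRodd} then follow at once by dropping all remainder integrals, which are nonnegative precisely because each coefficient is positive on the stated range: since $c_\beta>0$ exactly when $-Q/2<\beta<(Q-4)/2$, the hypothesis $\al\in(-Q/2,(Q-4k)/2)$ forces $c_{2i+\al}>0$ for every $0\le i\le k-1$, and the analogous check handles $c_{2i+1+\al}$ together with the Hardy factor $(Q-2-2\al)/2$ on $\al\in(-(Q+2)/2,(Q-4k-2)/2)$. Sharpness is inherited from the sharpness of $c_\al^2$ proved in Theorem \ref{identityRellich}, tested on the same family $f_\ep$; and in the equality case the gradient-type remainder carrying $\mathcal R_2^{k-j-1}f$ with $j=k-1$ forces $\mathcal R f+\frac{Q-4k-2\al}{2|x|}f=0$, so that by Lemma \ref{homo} the function $f$ is homogeneous of degree $-(Q-4k-2\al)/2$, which is incompatible with $f\in C_0^\infty(\mathbb G\setminus\{0\})$ unless $f\equiv0$.

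The main obstacle is organisational rather than conceptual: one must keep precise simultaneous track, along the telescoped chain, of the current weight exponent $\al+2j$, the accumulated product $\prod_{i=0}^{j-1}c_{2i+\al}$ weighting that stage's remainder, and the shifted inner constant $(Q-4-2\al-4j)/2$ appearing in the gradient-type remainder. Checking that the two remainder families emerging from the chain coincide term by term with the sums written in \eqref{eq:even} and \eqref{eq:odd} — with the correct index ranges and with the extra $+1$ shift correctly threaded through the odd case via \eqref{eq:even} — is where essentially all the bookkeeping effort lies.
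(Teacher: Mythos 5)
Your proposal is correct and follows essentially the same route as the paper: the even identity \eqref{eq:even} is obtained by iterating the weighted Rellich identity \eqref{eq:identityRellich} with the weight parameter shifted to $\al+2j$ at stage $j$, the odd identity \eqref{eq:odd} by first applying the Hardy identity \eqref{eq:identityHardy} to $\mathcal R_2^k f$ and then invoking \eqref{eq:even} with $\al$ replaced by $\al+1$, the inequalities by positivity of the coefficients on the stated ranges, and the equality case via the $j=k-1$ remainder and Lemma \ref{homo}. The only cosmetic caveat is that for sharpness the cutoff family must approximate $r^{-(Q-4k-2\al)/2}$ (respectively $r^{-(Q-4k-2-2\al)/2}$ in the odd case) rather than literally the $k=1$ family from Theorem \ref{identityRellich}, which is exactly what the paper does.
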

\begin{proof}
We first prove \eqref{eq:even}. If $k=1$, it is exactly \eqref{eq:identityRellich} from Theorem \ref{identityRellich}. If $k >1$, it follows by induction argument using consecutively Theorem \ref{identityRellich}.

We next prove \eqref{eq:odd}. Using Theorem \ref{identityHardy}, we get
\begin{align}\label{eq:R1new}
&\int_{\mathbb G} \frac{|\mathcal R(\mathcal R_2^k f)|^2}{|x|^{2\al}} dx\notag\\
&= \frac{(Q-2-2\al)^2}4 \int_{\mathbb G} \frac{|\mathcal R_2^k f|^2}{|x|^{2 + 2\al}} dx + \int_{\mathbb G} \frac1{|x|^{2\al}} \lt|\mathcal R(\mathcal R_2^k f) + \frac{Q-2-2\al}{2|x|} \mathcal R_2^k f\rt|^2 dx.
\end{align}
Thus, \eqref{eq:odd} follows from \eqref{eq:even} and \eqref{eq:R1new}.

The inequalities \eqref{eq:HReven} and \eqref{eq:HRodd} are trivial since $c_{2i+\al} >0, c_{2i+1+\al} >0$ for any $i=0,\ldots,k-1$ corresponding to each case. To check the sharpness of constant, we use the arguments as in the proof of Theorem \ref{identityRellich} by approximating the function $r^{-(Q-4k-2\al)/2}$. The straightforward and tedious computations show that 
\[
\int_{\mathbb G} \frac{|f_\ep|^2}{|x|^{4k+ 2\al}} dx = (-\ln \ep) \sigma(\mathfrak S) + O(1),
\]
and
\[
\int_{\mathbb G} \frac{|\mathcal R_2^k f|^2}{|x|^{2\al}} dx = \lt(\prod_{i=0}^{k-1} c_{2i+ \al}\rt)^2 (-\ln \ep) \sigma(\mathfrak S) + O(1).
\]
This proves the sharpness of \eqref{eq:HReven}. For \eqref{eq:HRodd}, we use the approximations of the function $r^{-(Q-2-4k-2\al)/2}$ and make the same computations.

If equality holds in \eqref{eq:HReven} for a function $f$. From \eqref{eq:even}, we see that
\[
\mathcal R f + \frac{Q-2\al -4k}{2|x|} f =0\quad \Longleftrightarrow \quad \mathbb Ef = -\frac{Q-2\al -4k}{2} f.
\]
By Lemma \ref{homo}, $f$ is positive homogeneous of order $-(Q-2\al -4k)/2$ which forces $f =0$ since $f/|x|^{\al + 2k} \in L^2(\mathbb G)$. The same argument also works for the equality in \eqref{eq:HRodd}.
\end{proof}

The rest of this section is to establish the generalizations of \eqref{eq:RonetwoE} to the homogeneous groups and its higher order versions. We first extend \eqref{eq:RonetwoE} to a weighted version on the homogeneous groups as follows.

\begin{theorem}\label{onetwo}
Let $\G$ be a homogeneous group of homogeneous dimension $Q\geq 5$. Let $|\cdot|$ be any homogeneous quasi-norm on $\mathbb G$. Then for any complex-valued function $f\in C_0^\infty(\mathbb G\setminus\{0\})$, we have
\begin{equation}\label{eq:identityonetwo}
\int_{\mathbb G} \frac{|\mathcal R_2 f|^2}{|x|^{2\al}} dx = \frac{(Q+2\al)^2}4 \int_{\mathbb G} \frac{|\mathcal R f|^2}{|x|^{2+2\al}} dx + \int_{\mathbb G} \frac1{|x|^{2\al}} \lt|\mathcal R^2 f + \frac{Q-2-2\al}{2|x|} \mathcal R f\rt|^2 dx,
\end{equation}
for any $\al \in \R$. As consequence, we get the following Rellich type inequality
\begin{equation}\label{eq:newRellich}
\frac{(Q+2\al)^2}4 \int_{\mathbb G} \frac{|\mathcal R f|^2}{|x|^{2+2\al}} dx \leq \int_{\mathbb G} \frac{|\mathcal R_2 f|^2}{|x|^{2\al}} dx
\end{equation}
for any complex-valued function $f\in C_0^\infty(\mathbb G\setminus\{0\})$ and $\al \in \R$. The inequality \eqref{eq:newRellich} is sharp and equality hold if and only if $f =0$.
\end{theorem}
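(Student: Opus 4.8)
The plan is to reduce the identity \eqref{eq:identityonetwo} to the weighted Hardy identity \eqref{eq:identityHardy} of Theorem \ref{identityHardy}, applied to the auxiliary function $g = \mathcal R f$. Note first that $g \in C_0^\infty(\mathbb G\setminus\{0\})$, that $\mathcal R^2 f = \mathcal R g$, and that $\mathcal R_2 f = \mathcal R g + \frac{Q-1}{|x|}g$. With this notation the integrand on the right of \eqref{eq:identityonetwo} is precisely $\left|\frac{\mathcal R g}{|x|^\al} + \frac{Q-2-2\al}{2|x|^{\al+1}}g\right|^2$, which is exactly the remainder appearing in \eqref{eq:identityHardy} for $g$. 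Hence Theorem \ref{identityHardy} already identifies that remainder as $\int_{\mathbb G}\frac{|\mathcal R g|^2}{|x|^{2\al}}dx - \frac{(Q-2-2\al)^2}{4}\int_{\mathbb G}\frac{|g|^2}{|x|^{2+2\al}}dx$, and it only remains to relate $\int_{\mathbb G}\frac{|\mathcal R_2 f|^2}{|x|^{2\al}}dx$ to these two quantities.

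Expanding the square $|\mathcal R_2 f|^2 = |\mathcal R g|^2 + \frac{2(Q-1)}{|x|}\Re(\mathcal R g\,\overline g) + \frac{(Q-1)^2}{|x|^2}|g|^2$, the whole computation reduces to evaluating the single cross term $\int_{\mathbb G}\frac{\Re(\mathcal R g\,\overline g)}{|x|^{1+2\al}}dx$. This is the computational heart of the proof: I would write $\Re(\mathcal R g\,\overline g)=\frac12\mathcal R|g|^2$, pass to polar coordinates via \eqref{eq:polar}, and integrate by parts in the radial variable $r$ (the boundary terms vanishing by compact support), exactly as in the identity \eqref{eq:R3} in the proof of Theorem \ref{identityRellich}. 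This yields $\int_{\mathbb G}\frac{\Re(\mathcal R g\,\overline g)}{|x|^{1+2\al}}dx = -\frac{Q-2-2\al}{2}\int_{\mathbb G}\frac{|g|^2}{|x|^{2+2\al}}dx$. Substituting this together with the Hardy identity, the coefficient of $\int_{\mathbb G}\frac{|g|^2}{|x|^{2+2\al}}dx$ collapses to $\frac{(Q-2-2\al)^2}{4}+(Q-1)(1+2\al)=\frac{(Q+2\al)^2}{4}$, which is exactly \eqref{eq:identityonetwo}. The only genuine obstacle is bookkeeping: tracking the real parts carefully and verifying this final coefficient identity; everything else is a direct substitution.

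The inequality \eqref{eq:newRellich} then follows by dropping the manifestly nonnegative remainder term in \eqref{eq:identityonetwo}. For sharpness I expect to reuse verbatim the cutoff family $f_\ep$ built from $r^{-(Q-4-2\al)/2}$ in the proof of Theorem \ref{identityRellich}: the computations there already give $\int_{\mathbb G}\frac{|\mathcal R_2 f_\ep|^2}{|x|^{2\al}}dx = c_\al^2(-\ln\ep)\sigma(\mathfrak S)+O(1)$, and a parallel (easier) estimate of the same type gives $\int_{\mathbb G}\frac{|\mathcal R f_\ep|^2}{|x|^{2+2\al}}dx = \frac{(Q-4-2\al)^2}{4}(-\ln\ep)\sigma(\mathfrak S)+O(1)$. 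Since $c_\al = \frac{(Q+2\al)(Q-4-2\al)}{4}$, the ratio of the leading coefficients is precisely $\frac{(Q+2\al)^2}{4}$, proving that the constant in \eqref{eq:newRellich} cannot be improved.

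For the equality case, equality in \eqref{eq:newRellich} forces the remainder in \eqref{eq:identityonetwo} to vanish, that is $\mathcal R g + \frac{Q-2-2\al}{2|x|}g = 0$ with $g=\mathcal R f$, equivalently $\mathbb E g = -\frac{Q-2-2\al}{2}g$. By Lemma \ref{homo}, $g$ is then positively homogeneous of order $-(Q-2-2\al)/2$. Since $g=\mathcal R f$ belongs to $L^2(|x|^{-2-2\al}dx)$ whereas a nonzero function of this homogeneity produces a logarithmically divergent radial integral, we conclude $g\equiv 0$, i.e. $\mathcal R f\equiv 0$. Applying Lemma \ref{homo} once more (with $\nu=0$) and using that $f$ has compact support in $\mathbb G\setminus\{0\}$ then forces $f\equiv 0$, completing the characterization of equality.
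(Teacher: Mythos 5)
Your proposal is correct and follows essentially the same route as the paper: expanding $|\mathcal R_2 f|^2$ with $g=\mathcal R f$, evaluating the cross term by integration by parts in polar coordinates, applying the weighted Hardy identity \eqref{eq:identityHardy} to $\mathcal R f$, and then verifying sharpness with the same test family built from $r^{-(Q-4-2\al)/2}$ and the equality case via Lemma \ref{homo}. The only differences are cosmetic (naming $g=\mathcal R f$, and spelling out the final step $\mathcal R f\equiv 0\Rightarrow f\equiv 0$, which the paper leaves implicit).
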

\begin{proof}
Expanding the square of $\mathcal R_2 f$, we get
\[
\int_{\mathbb G} \frac{|\mathcal R_2 f|^2}{|x|^{2\al}} dx = \int_{\mathbb G} \frac{|\mathcal R(\mathcal R f)|^2}{|x|^{2\al}} dx + 2(Q-1) \Re \int_{\mathbb G}\frac{\mathcal R(\mathcal R f)\, \overline{\mathcal R f}}{|x|^{2\al +1}} dx + (Q-1)^2 \int_{\mathbb G} \frac{|\mathcal R f|^2}{|x|^{2+2\al}} dx.
\]
Using integration by parts, we have
\[
2\Re \int_{\mathbb G}\frac{\mathcal R(\mathcal R f)\, \overline{\mathcal R f}}{|x|^{2\al +1}} dx = -(Q-2-2\al) \int_{\mathbb G} \frac{|\mathcal R f|^2}{|x|^{2+2\al}} dx.
\]
By Theorem \ref{identityHardy}, we have
\[
\int_{\mathbb G} \frac{|\mathcal R(\mathcal R f)|^2}{|x|^{2\al}} dx = \frac{(Q-2-2\al)^2}4  \int_{\mathbb G} \frac{|\mathcal R f|^2}{|x|^{2+2\al}} dx + \int_{\mathbb G} \frac1{|x|^{2\al}} \lt|\mathcal R^2 f + \frac{Q-2-2\al}{2|x|} \mathcal R f\rt|^2 dx.
\]
Gathering these equalities together, we obtain \eqref{eq:identityonetwo}.

The inequality \eqref{eq:newRellich} is an immediate consequence of \eqref{eq:identityonetwo}. The sharpness of \eqref{eq:newRellich} is verified by using approximations of the function $r^{-(Q-4-2\al)/2}$. If equality in \eqref{eq:newRellich} holds for some function $f$, then we must have
\[
\mathcal R^2 f + \frac{Q-2-2\al}{2|x|} \mathcal R f =0 \quad \Longleftrightarrow \quad \mathbb E(\mathcal Rf) = -\frac{Q-2-2\al}{2} \mathcal R f
\]
 which implies that $\mathcal R f$ is positively homogeneous of degree $-(Q-2-2\al)/2$ by Lemma \ref{homo}. This will implies $\mathcal R f =0$ since $\mathcal R f/|x|^{1+\al}$ is in $L^2(\mathbb G)$. Hence, so is $f$.
\end{proof}

Combining Theorem \ref{higherorderRellich} and Theorem \ref{onetwo}, we obtain some new weighted Rellich type inequalities. 
\begin{theorem}\label{L2HRnew}
Let $G$ be a homogeneous group of homogeneous dimension $Q$. Let $|\cdot|$ be any homogeneous quasi-norm on $\mathbb G$. Let $k,l$ be nonnegative integers such that $Q \geq 4k+1$ and $k \geq l+1$. Then for any complex-valued function $f\in C_0^\infty(\mathbb G\setminus\{0\})$ and $\al \in \R$, we have
\begin{align}\label{eq:L2derivativeeven}
\int_{\mathbb G}& \frac{|\mathcal R_2^k f|^2}{|x|^{2\al}} dx\notag\\
&\quad=\frac4{(Q-2\al)^2}\lt(\prod_{i=0}^{k-l-1} \frac{Q^2-4(2i+\al)^2}4\rt)^2\int_{\mathbb G} \frac{|\mathcal R \mathcal R_2^l f|^2}{|x|^{4(k-l)-2+ 2\al}} dx \notag\\
&\quad\quad + \lt(\prod_{i=0}^{k-l-2} c_{2i+ \al}\rt)^2\int_{\mathbb G} \frac{\lt|\mathcal R^2 \mathcal R_2^l f + \frac{Q+2-4(k-l) -2\al}{2|x|} \mathcal R \mathcal R_2^l f\rt|^2}{|x|^{4(k-l-1)+2\al}} dx\notag\\
&\quad\quad  +\int_{\mathbb G} \frac{\lt|\mathcal R_2^k f + c_\al \frac{\mathcal R_2^{k-1}f}{|x|^2}\rt|^2}{|x|^{2\al}}  dx  +\sum_{j=1}^{k-l-2}\lt(\prod_{i=0}^{j-1} c_{2i+ \al}\rt)^2 \int_{\mathbb G} \frac{ \lt|\mathcal R_2^{k-j}f + c_{2j+\al} \frac{\mathcal R_2^{k-j-1}f}{|x|^2}\rt|^2}{|x|^{4j+ 2\al}} dx\notag\\
&\quad\quad +2c_\al \int_{\mathbb G} \frac{ \lt|\mathcal R (\mathcal R_2^{k-1}f) + \frac{Q-4-2\al}{2|x|} \mathcal R_2^{k-1} f\rt|^2}{|x|^{2+2\al}} dx\notag\\
&\quad\quad +2 \sum_{j=1}^{k-l-2} \lt(\prod_{i=0}^{j-1} c_{2i+ \al}\rt)^2 c_{2j+\al} \int_{\mathbb G} \frac{\lt|\mathcal R (\mathcal R_2^{k-j-1}f)+ \frac{Q-4-2\al-4j}{2|x|} \mathcal R_2^{k-j-1} f\rt|^2}{|x|^{2+2\al + 4j}}  dx,
\end{align}
and
\begin{align}\label{eq:L2derivativeodd}
\int_{\mathbb G} &\frac{|\mathcal R \mathcal R_2^k f|^2}{|x|^{2\al}} dx\notag\\
&=\lt(\prod_{i=0}^{k-l-1}\frac{Q^2 -4(1+\al +2i)^2}4\rt)^2\int_{\mathbb G} \frac{|\mathcal R \mathcal R_2^l f|^2}{|x|^{4(k-l)+ 2\al}} dx \notag\\
&\quad + \frac{(Q-2-2\al)^2}4\lt(\prod_{i=0}^{k-l-2} c_{2i+1+\al}\rt)^2\int_{\mathbb G} \frac{\lt|\mathcal R^2 \mathcal R_2^l f + \frac{Q+2-4(k-l) -2\al}{2|x|} \mathcal R \mathcal R_2^l f\rt|^2}{|x|^{4(k-l-1)+2+2\al}} dx\notag\\
&\quad +\frac{(Q-2-2\al)^2}4\int_{\mathbb G} \frac{\lt|\mathcal R_2^k f + c_{1+\al} \frac{\mathcal R_2^{k-1}f}{|x|^2}\rt|^2}{|x|^{2+2\al}}  dx \notag\\
&\quad +\frac{(Q-2-2\al)^2}4\sum_{j=1}^{k-l-2}\lt(\prod_{i=0}^{j-1} c_{2i+ 1+\al}\rt)^2 \int_{\mathbb G} \frac{ \lt|\mathcal R_2^{k-j}f + c_{2j+1+\al} \frac{\mathcal R_2^{k-j-1}f}{|x|^2}\rt|^2}{|x|^{4j+2+ 2\al}} dx\notag\\
&\quad +2c_{1+\al}\frac{(Q-2-2\al)^2}4 \int_{\mathbb G} \frac{ \lt|\mathcal R (\mathcal R_2^{k-1}f) + \frac{Q-6-2\al}{2|x|} \mathcal R_2^{k-1} f\rt|^2}{|x|^{4+2\al}} dx\notag\\
&\quad+2 \frac{(Q-2-2\al)^2}4\sum_{j=1}^{k-l-2} \lt(\prod_{i=0}^{j-1} c_{2i+1+ \al}\rt)^2 c_{2j+1+\al}\times\notag\\
&\qquad\qquad \qquad\qquad \qquad\qquad\qquad\times \int_{\mathbb G} \frac{\lt|\mathcal R (\mathcal R_2^{k-j-1}f)+ \frac{Q-6-2\al-4j}{2|x|} \mathcal R_2^{k-j-1} f\rt|^2}{|x|^{4+2\al + 4j}}  dx\notag\\
&\quad + \int_{\mathbb G} \frac{ \lt|\mathcal R \mathcal R_2^k f+ \frac{Q-2 -2\al}2 \frac{\mathcal R_2^k f}{|x|}\rt|^2}{|x|^{2\al}} dx.
\end{align}
As a consequence, the following weighted Rellich type inequalities holds for any complex-valued function $f \in C_0^\infty(\mathbb G\setminus\{0\})$
\begin{equation}\label{eq:L2HRneweven}
\frac4{(Q-2\al)^2}\lt(\prod_{i=0}^{k-l-1} \frac{Q^2-4(2i+\al)^2}4\rt)^2\int_{\mathbb G} \frac{|\mathcal R\mathcal R_2^l f|^2}{|x|^{4(k-l) -2+ 2\al}} dx \leq \int_{\mathbb G} \frac{|\mathcal R_2^k f|^2}{|x|^{2\al}} dx,
\end{equation}
for any $\al \in (-Q/2, (Q-4(k-l-1))/2)$ if $k\geq l+2$ and $\al \in \R$ if $k=l+1$, and
\begin{equation}\label{eq:L2HRnewodd}
\lt(\prod_{i=0}^{k-l-1}\frac{Q^2 -4(1+\al +2i)^2}4\rt)^2\int_{\mathbb G} \frac{|\mathcal R\mathcal R_2^l f|^2}{|x|^{4(k-l)+ 2\al}} dx \leq \int_{\mathbb G} \frac{|\mathcal R \mathcal R_2^k f|^2}{|x|^{2\al}} dx,
\end{equation}
for any $\al \in (-(Q+2)/2, (Q-4(k-l)+2)/2)$ if $k\geq l+2$ and $\al \in \R$ if $k=l+1$. Moreover, these inequalities \eqref{eq:L2HRneweven} and \eqref{eq:L2HRnewodd} are sharp and equality holds if and only if $f=0$.
\end{theorem}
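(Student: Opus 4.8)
The plan is to derive the two identities \eqref{eq:L2derivativeeven} and \eqref{eq:L2derivativeodd} by combining the higher-order decomposition from Theorem \ref{higherorderRellich} with the first-to-second order identity from Theorem \ref{onetwo}, and then to read off the inequalities \eqref{eq:L2HRneweven} and \eqref{eq:L2HRnewodd} by discarding the nonnegative remainder terms. The essential observation is that \eqref{eq:even} expresses $\int_{\mathbb G}|x|^{-2\al}|\mathcal R_2^k f|^2\,dx$ as a sum of the term $\bigl(\prod_{i=0}^{k-1}c_{2i+\al}\bigr)^2\int_{\mathbb G}|x|^{-4k-2\al}|f|^2\,dx$ together with a list of explicit nonnegative squares. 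To obtain a weighted Hardy--Rellich inequality that terminates at $\mathcal R\mathcal R_2^l f$ rather than at $f$, I would \emph{stop iterating early}. Concretely, the induction that proves \eqref{eq:even} applies \eqref{eq:identityRellich} repeatedly to peel off one factor $\mathcal R_2$ at a time; after $k-l-1$ applications one reaches a leftover term of the form $\bigl(\prod_{i=0}^{k-l-2}c_{2i+\al}\bigr)^2\int_{\mathbb G}|x|^{-4(k-l-1)-2\al}|\mathcal R_2^{l+1}f|^2\,dx$, and it is precisely this term that I would treat differently.

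For the even case \eqref{eq:L2derivativeeven}, first I would run the Theorem \ref{identityRellich} induction only $k-l-1$ times, producing the string of remainder integrals indexed by $j=1,\dots,k-l-2$ that already appear on the right-hand side. The residual weighted integral of $|\mathcal R_2^{l+1}f|^2 = |\mathcal R_2(\mathcal R_2^l f)|^2$ is then handled by Theorem \ref{onetwo}, applied to the function $g=\mathcal R_2^l f$ with the shifted weight exponent $\al'=2(k-l-1)+\al$. The identity \eqref{eq:identityonetwo} rewrites this residual as $\tfrac{(Q+2\al')^2}{4}\int_{\mathbb G}|x|^{-2-2\al'}|\mathcal R g|^2\,dx$ plus one more nonnegative square involving $\mathcal R^2 g + \tfrac{Q-2-2\al'}{2|x|}\mathcal R g$; substituting $\al'=2(k-l-1)+\al$ produces exactly the leading coefficient $\tfrac{4}{(Q-2\al)^2}\bigl(\prod_{i=0}^{k-l-1}\tfrac{Q^2-4(2i+\al)^2}{4}\bigr)^2$ after one rewrites $c_{2i+\al}=\tfrac{(Q+2(2i+\al))(Q-4-2(2i+\al))}{4}=\tfrac{Q^2-4(2i+1+\al)^2}{4}$ and absorbs the $\tfrac{(Q+2\al')^2}{4}$ factor into the product. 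The odd identity \eqref{eq:L2derivativeodd} follows the same scheme but begins by splitting off one radial derivative via \eqref{eq:R1new} (Theorem \ref{identityHardy} applied to $\mathcal R_2^k f$), which introduces the factor $\tfrac{(Q-2-2\al)^2}{4}$ and the final square $\bigl|\mathcal R\mathcal R_2^k f + \tfrac{Q-2-2\al}{2}\tfrac{\mathcal R_2^k f}{|x|}\bigr|^2$; thereafter one applies the even-case treatment to $\int_{\mathbb G}|x|^{-2-2\al}|\mathcal R_2^k f|^2\,dx$ with the weight shifted by one.

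Once the two identities are established, the inequalities \eqref{eq:L2HRneweven} and \eqref{eq:L2HRnewodd} are immediate: every term other than the designated leading one is a nonnegative integral of a modulus squared, so dropping them yields the stated bounds, provided the leading coefficient is positive. Positivity of $\prod_{i=0}^{k-l-1}\tfrac{Q^2-4(2i+\al)^2}{4}$ is exactly the restriction $\al\in(-Q/2,(Q-4(k-l-1))/2)$ in the even case (each factor $\tfrac{(Q-2(2i+\al))(Q+2(2i+\al))}{4}$ is positive when $|2i+\al|<Q/2$ for all $0\le i\le k-l-1$, and the binding constraint comes from the largest index), and $\al\in(-(Q+2)/2,(Q-4(k-l)+2)/2)$ in the odd case; when $k=l+1$ the product is empty or single-factored and no restriction on $\al$ is needed beyond what Theorem \ref{onetwo} already permits. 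Sharpness and the equality-characterization are obtained exactly as in Theorem \ref{higherorderRellich}: I would test with the approximating family $f_\ep$ of $r^{-(Q-2\al-4(k-l)+\cdots)/2}$ type tuned so that $|\mathcal R\mathcal R_2^l f_\ep|^2$ and $|\mathcal R_2^k f_\ep|^2$ both contribute a $(-\ln\ep)\sigma(\mathfrak S)$ divergence with ratio equal to the claimed constant, and for equality I would use that all the remainder squares vanish, forcing a first-order homogeneity relation (via Lemma \ref{homo}) on $\mathcal R\mathcal R_2^l f$ that is incompatible with $L^2$-integrability of the relevant weighted quantity unless $f=0$.

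The step I expect to be the main obstacle is the bookkeeping of the weight exponents and the index shifts in the iteration: one must verify that applying Theorem \ref{identityRellich} with the running weight $2j+\al$ at stage $j$ reproduces \emph{verbatim} the remainder sums over $j=1,\dots,k-l-2$ displayed in \eqref{eq:L2derivativeeven}--\eqref{eq:L2derivativeodd}, and, crucially, that the coefficient emitted by the single application of Theorem \ref{onetwo} at the truncation step combines with the partial product $\bigl(\prod_{i=0}^{k-l-2}c_{2i+\al}\bigr)^2$ to give precisely $\tfrac{4}{(Q-2\al)^2}\bigl(\prod_{i=0}^{k-l-1}\tfrac{Q^2-4(2i+\al)^2}{4}\bigr)^2$ rather than an off-by-one variant. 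This is a routine but delicate algebraic reconciliation of the two parametrizations $c_\beta$ and $\tfrac{Q^2-4(\cdots)^2}{4}$, and it is the only place where an error could silently propagate.
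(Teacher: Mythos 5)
Your route is the same as the paper's: truncate the Theorem \ref{identityRellich} iteration after $k-l-1$ steps (equivalently, apply \eqref{eq:even} at order $k-l-1$ to $g=\mathcal R_2^{l+1}f$), close with Theorem \ref{onetwo} at the shifted weight $\al'=2(k-l-1)+\al$, obtain \eqref{eq:L2derivativeodd} by first applying \eqref{eq:identityHardy} to $\mathcal R\mathcal R_2^k f$ and then the even identity with weight $1+\al$, and settle sharpness and the equality case exactly as in Theorem \ref{higherorderRellich} (power-function approximations; Lemma \ref{homo} plus weighted $L^2$-integrability). This is precisely the paper's proof.

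However, the one computation you yourself flagged as the critical step contains a concrete error: the rewriting $c_{2i+\al}=\frac{Q^2-4(2i+1+\al)^2}{4}$ is false, since $(Q+2\be)(Q-4-2\be)=(Q-2)^2-(2\be+2)^2$, so in fact $c_{2i+\al}=\frac{(Q-2)^2-4(2i+1+\al)^2}{4}$. The correct reconciliation is a telescoping, not a reparametrization: writing $\frac{Q^2-4(2i+\al)^2}{4}=\frac{(Q-2(2i+\al))(Q+2(2i+\al))}{4}$, the second factor of $c_{2i+\al}$, namely $Q-4-2(2i+\al)=Q-2(2(i+1)+\al)$, is the ``minus'' factor at index $i+1$ of the target product; after squaring, the missing minus-factor $Q-2\al$ at $i=0$ is cancelled by the prefactor $\frac{4}{(Q-2\al)^2}$, and the missing plus-factor $Q+2(2(k-l-1)+\al)$ is exactly what $\lt(\frac{Q+2\al'}{2}\rt)^2$ from Theorem \ref{onetwo} supplies --- this is the product identity the paper records in its proof, and with your incorrect formula the constants would not match. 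A second, milder nuance: the restriction on $\al$ is required so that the coefficients $c_\al$ and $c_{2j+\al}$ ($1\le j\le k-l-2$) multiplying the remainder squares in \eqref{eq:L2derivativeeven} are nonnegative, since otherwise dropping those terms does not yield the inequality; it is not merely a positivity condition on the leading constant as you assert. Your factor-positivity criterion $|2i+\al|<Q/2$ for $0\le i\le k-l-1$ happens to produce the same interval $(-Q/2,(Q-4(k-l-1))/2)$ as the condition $c_{2j+\al}\ge 0$ for $0\le j\le k-l-2$, so the ranges you state are correct, but the logical role you assign them is not the operative one.
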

\begin{proof}
We first prove \eqref{eq:L2derivativeeven}. Denote $g =\mathcal R^{l+1}_2 f$ and apply \eqref{eq:even} for $\mathcal R_2^{k-l-1}g$ and then apply Theorem \ref{onetwo} for $\mathcal R_2 \mathcal R_2^l f$, we get the desired equality \eqref{eq:L2derivativeeven}. Here we use the equality
\[
\lt(\frac{Q+4(k-l-1)+2\al}2\rt)^2\lt(\prod_{i=0}^{k-l-2} c_{2i+ \al}\rt)^2 = \frac4{(Q-2\al)^2}\lt(\prod_{i=0}^{k-l-1} \frac{Q^2-4(2i+\al)^2}4\rt)^2.
\]
To prove \eqref{eq:L2derivativeodd}, we first apply \eqref{eq:identityHardy} to $\mathcal R\mathcal R_2^k f$ and then use \eqref{eq:L2derivativeeven} for $\mathcal R_2^k f$ with weights $|x|^{2(1+\al)}$ and the equality
\begin{multline*}
\frac{(Q-2-2\al)^2}4\frac{(Q+4(k-l-1)+2+2\al)^2}4\lt(\prod_{i=0}^{k-l-2} c_{2i+ 1+\al}\rt)^2 \\
=\lt(\prod_{i=0}^{k-l-1}\frac{Q^2 -4(1+\al +2i)^2}4\rt)^2.
\end{multline*}

The inequalities \eqref{eq:L2HRneweven} and \eqref{eq:L2HRnewodd} are implied from \eqref{eq:L2derivativeeven} and \eqref{eq:L2derivativeodd} respectively by dropping the nonnegative remainder terms. The sharpness of \eqref{eq:L2HRneweven} and \eqref{eq:L2HRnewodd} is verified by using the approximations of the function $r^{-(Q-4k-2\al)/2}$ and the function $r^{-(Q-4k-2-2\al)/2}$ respectively. 

Suppose that equality holds in \eqref{eq:L2HRneweven} for a function $f$. It follows from \eqref{eq:L2derivativeeven} that 
\[
\mathcal R (\mathcal R_2^{k-1}f) + \frac{Q-4-2\al}{2|x|} \mathcal R_2^{k-1} f =0 \quad \Longleftrightarrow \quad \mathbb E(\mathcal R_2^{k-1}f) = -\frac{Q-4-2\al}{2} \mathcal R_2^{k-1} f.
\]
By Lemma \ref{homo}, $\mathcal R_2^{k-1} f$ is positively homogeneous of order $-(Q-4-2\al)/2$. Since $\mathcal R_2^{k-1} f/|x|^{2+ \al} \in L^2(\mathbb G)$, we then must have $\mathcal R_2^{k-1} f =0$ which forces $f =0$. Similar arguments work for \eqref{eq:L2HRnewodd}.
\end{proof}

\section{Weighted $L^p-$Hardy--Rellich type inequalities}
In this section, we establish the $L^p-$weighted Hardy--Rellich type inequalities with $p>1$, i.e., $L^p-$version of the inequalities obtained in Section $3$. Throughout this section, we will use the following notation for $p>1$ and $\al \in \R$
\begin{equation}\label{eq:constant}
c_{p,\al} = \frac{(Q -2p-p\al) (Q + p'\al)}{p p'},\quad p' =\frac p{p-1},\quad\text{and}\quad d_{p,\al} = \frac{Q-p(1+\al)}p.
\end{equation}
Let $\xi, \eta \in \C$, denote 
\[
R_p(\xi,\eta) = \frac1p |\eta|^p + \frac{p-1}p |\xi|^p - \Re(|\xi|^{p-2} \xi \overline{\eta}).
\]
By the convexity of $z \to |z|^p$, we see that $R_p(\xi,\eta) \geq 0$ and $R_p(\xi,\eta) = 0$ if and only if $\xi =\eta$. If $\xi, \eta \in \R$, we then have
\[
R_p(\xi, \eta) =(p-1) \int_0^1|t \xi + (1-t)\eta|^{p-2}\, t\, dt\, |\xi-\eta|^2.
\]
We start by establish a weighted version of \eqref{eq:RSHardy}, i.e., a weighted $L^p-$Hardy inequality on $\mathbb G$ which will be frequently used in this section. 

\begin{theorem}\label{LpHardy}
Let $\mathbb G$ be a homogeneous group of homogeneous dimension $Q$. Let $|\cdot|$ be any homogeneous quasi-norm on $\mathbb G$. Let $1 < p < Q$, and for any complex-valued function $f \in C_0^\infty(\mathbb G\setminus\{0\})$, we have 
\begin{equation}\label{eq:identityLpHardy}
\int_{\mathbb G} \frac{|\mathcal R f|^p}{|x|^{p\al}} dx = |d_{p,\al}|^p \int_{\mathbb G} \frac{|f|^p}{|x|^{p(1+ \al)}} dx + p\int_{\mathbb G} \frac1{|x|^{p\al}}R_p\lt(-d_{p,\al}\frac{f}{|x|},\mathcal R f \rt) dx.
\end{equation}
for any $\al \in \R$. As consequence, we obtain the $L^p-$weighted Hardy inequality
\begin{equation}\label{eq:LpweightedHardy}
|d_{p,\al}|^p \int_{\mathbb G} \frac{|f|^p}{|x|^{p(1+ \al)}} dx \leq \int_{\mathbb G} \frac{|\mathcal R f|^p}{|x|^{p\al}} dx,
\end{equation}
for any $\al \in \R$ such that $Q -p(1+\al)\not=0$ and any complex-valued function $f\in C^\infty_0(\mathbb G\setminus\{0\})$. The constant $|d_{p,\al}|^p$ is sharp and equality holds in \eqref{eq:LpweightedHardy} if and only if $f=0$.
\end{theorem}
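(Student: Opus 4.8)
The plan is to obtain the identity \eqref{eq:identityLpHardy} from a single integration by parts, in exact parallel with the derivation of the $L^2$ identity \eqref{eq:identityHardy}, the only difference being that the remainder is now recorded by the nonnegative functional $R_p$. First I would set $\xi = -d_{p,\al}\,f/|x|$ and $\eta = \mathcal R f$ and multiply $p R_p(\xi,\eta) = |\eta|^p + (p-1)|\xi|^p - p\,\Re\big(|\xi|^{p-2}\xi\overline{\eta}\big)$ by $|x|^{-p\al}$ before integrating over $\mathbb G$. The first term integrates to $\int_{\mathbb G}|\mathcal R f|^p\,|x|^{-p\al}\,dx$, the second to $(p-1)|d_{p,\al}|^p\int_{\mathbb G}|f|^p\,|x|^{-p(1+\al)}\,dx$, and the third to a scalar multiple of $\int_{\mathbb G}|f|^{p-2}\Re(f\,\overline{\mathcal R f})\,|x|^{-(p\al+p-1)}\,dx$. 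Thus the entire computation collapses onto this single cross-integral.

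The key step is to write $|f|^{p-2}\Re(f\,\overline{\mathcal R f}) = \tfrac1p\,\mathcal R(|f|^p)$ and integrate by parts in the radial variable. Using the polar decomposition \eqref{eq:polar} together with the fact that $\mathcal R$ acts as $d/d|x|$, so that $\tfrac{d}{dr}\big(|f(ry)|^p\big) = (\mathcal R|f|^p)(ry)$, I would integrate $r^{\,Q-p\al-p}\tfrac{d}{dr}(|f(ry)|^p)$ against $dr$; the boundary contributions vanish because $f$ is compactly supported in $\mathbb G\setminus\{0\}$, leaving
\[
\int_{\mathbb G}\frac{\mathcal R(|f|^p)}{|x|^{p\al+p-1}}\,dx = -\big(Q-p(1+\al)\big)\int_{\mathbb G}\frac{|f|^p}{|x|^{p(1+\al)}}\,dx = -p\,d_{p,\al}\int_{\mathbb G}\frac{|f|^p}{|x|^{p(1+\al)}}\,dx .
\]
Feeding this back and using the real identity $|d_{p,\al}|^{p-2}d_{p,\al}^{2} = |d_{p,\al}|^p$, the cross term contributes exactly $-p|d_{p,\al}|^p\int_{\mathbb G}|f|^p\,|x|^{-p(1+\al)}\,dx$; combined with the $(p-1)$-term this gives $p\int_{\mathbb G}|x|^{-p\al}R_p\,dx = \int_{\mathbb G}|\mathcal R f|^p\,|x|^{-p\al}\,dx - |d_{p,\al}|^p\int_{\mathbb G}|f|^p\,|x|^{-p(1+\al)}\,dx$, which is \eqref{eq:identityLpHardy} after rearrangement.

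The inequality \eqref{eq:LpweightedHardy} is then immediate from $R_p\ge 0$. For sharpness I would reuse the cut-off scheme from the proof of Theorem \ref{identityRellich}, truncating the critical homogeneous profile $r^{-d_{p,\al}} = r^{-(Q-p(1+\al))/p}$ between the scales $\ep$ and $1/\ep$ and verifying that both sides grow like $(-\ln\ep)\,\sigma(\mathfrak S) + O(1)$, so that the ratio of the two integrals tends to $|d_{p,\al}|^p$. For the equality case, equality in \eqref{eq:LpweightedHardy} forces $\int_{\mathbb G}|x|^{-p\al}R_p(-d_{p,\al}f/|x|,\mathcal R f)\,dx = 0$; since $R_p(\xi,\eta)=0$ precisely when $\xi=\eta$, this means $\mathcal R f = -d_{p,\al}\,f/|x|$, i.e. $\mathbb E f = -d_{p,\al}f$, so Lemma \ref{homo} makes $f$ positively homogeneous of degree $-d_{p,\al}$; then $|f|^p/|x|^{p(1+\al)}$ is homogeneous of degree $-Q$, whose integral diverges in polar coordinates unless $f\equiv0$.

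I expect the main obstacle to be the rigorous justification of the integration by parts when $1<p<2$, where $|f|^p$ need not be twice differentiable at the zeros of $f$. Both the chain-rule identity $|f|^{p-2}\Re(f\,\overline{\mathcal R f}) = \tfrac1p\mathcal R(|f|^p)$ (read with the convention $|f|^{p-2}f=0$ on $\{f=0\}$) and the ensuing by-parts should be handled by regularizing $|f|^p$ as $(|f|^2+\delta)^{p/2}$ and letting $\delta\to0$, using that $|f|^{p-1}|\mathcal R f|$ is bounded and compactly supported so that no boundary or singular contribution survives. Everything else is a routine tracking of the constants.
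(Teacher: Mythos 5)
Your proposal is correct and follows essentially the same route as the paper: the identity comes from the radial integration by parts in polar coordinates \eqref{eq:polar} producing the factor $Q-p(1+\al)=p\,d_{p,\al}$, recast via the convexity remainder $R_p$ (you expand $R_p$ and evaluate the cross term, the paper starts from the weighted norm and recognizes $R_p$ at the end --- the same computation read in opposite directions), with the same sharpness test functions approximating $r^{-(Q-p(1+\al))/p}$ and the same equality analysis via $R_p(\xi,\eta)=0\Leftrightarrow\xi=\eta$, Lemma \ref{homo}, and the failure of $L^p$-integrability of the homogeneous profile. Your closing remark on regularizing $|f|^p$ by $(|f|^2+\delta)^{p/2}$ for $1<p<2$ addresses a technical point the paper passes over silently, and is a welcome addition rather than a deviation.
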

The case $\al =0$, Theorem \ref{LpHardy} recovers Theorem $3.1$ in \cite{RS2017}. The case $p=2$ Theorem \ref{LpHardy} recovers Theorem \ref{identityHardy} of Ruzhansky and Suragan.
\begin{proof}
Using the polar coordinate \eqref{eq:polar}, we have
\begin{align}\label{eq:LpHardy1}
\int_{\mathbb G} \frac{|f|^p}{|x|^{p(1+ \al)}} dx &= \int_0^\infty r^{Q-p(1+\al)-1} \int_{\mathfrak S} |f(r y)|^p d\si(y) dr\notag\\
&=\frac1{Q-p(1+\al)} \int_0^\infty (r^{Q-p(1+\al)})' \int_{\mathfrak S} |f(r y)|^p d\si(y) dr\notag\\
&=-\frac{p}{Q-p(1+\al)}\Re \int_0^\infty r^{Q-p(1+\al)} \int_{\mathfrak S} |f(r y)|^{p-2} f(ry) \overline{\mathcal R f(ry)} d\si(y) dr\notag\\
&=-\frac{p}{Q-p(1+\al)}\Re \int_{\mathbb G}\frac{|f(ry)|^{p-2} f(ry)}{|x|^{(p-1)(1+\al)}} \frac{\overline{\mathcal R f}}{|x|^{\al}} dx\notag\\
&=\frac{p-1}p \int_{\mathbb G} \frac{|f|^p}{|x|^{p(1+\al)}} dx + \frac1p \lt(\frac{p}{|Q-p(1+\al)|}\rt)^p\int_{\mathbb G} \frac{|\mathcal R f|^p}{|x|^{p\al}} dx\notag\\
&\qquad -\int_{\mathbb G} R_p\lt(\frac{f}{|x|^{1+\al}},-\frac{p}{Q-p(1+\al)}\frac{\mathcal R f}{|x|^\al} \rt) dx.
\end{align}
The equality \eqref{eq:identityLpHardy} is now derived from \eqref{eq:LpHardy1}.

The inequality \eqref{eq:LpweightedHardy} is an immediate consequence of \eqref{eq:identityLpHardy}. The sharpness of \eqref{eq:LpweightedHardy} is verified by testing the approximations of the function $ r^{-(Q-p(1+\al))/p}$. From \eqref{eq:identityLpHardy} we see that equality holds in \eqref{eq:LpweightedHardy} if and only if 
\[
R_p\lt(-\frac{Q-p(1+\al)}p\frac{f}{|x|},\mathcal R f \rt)=0
\]
or equivalently
\[
\mathcal R f = -\frac{Q-p(1+\al)}{p} \frac{f}{|x|}\quad \Longleftrightarrow \quad \mathbb E f = -\frac{Q-p(1+\al)}{p} f.
\]
By Lemma \ref{homo}, $f$ is positively homogeneous of order $-(Q-p(1+\al))/p$ which forces $f =0$ since $|f|/ |x|^{1+\al}$ is in $L^p(\mathbb G)$.
\end{proof}
We next prove a weighted $L^p-$Rellich inequality on $\mathbb G$ which generalizes the inequality \eqref{eq:RellichGRS} to the weighted version and for any $p\in (1,Q/2)$ and generalizes the inequality \eqref{eq:LpRellichE} to the setting of homogeneous groups.

\begin{theorem}\label{LpRellich}
Let $\mathbb G$ be a homogeneous group of homogeneous dimension $Q$. Let $|\cdot|$ be any homogeneous quasi-norm on $\mathbb G$. Let $1 < p < Q/2$, and for any complex-valued function $f \in C_0^\infty(\mathbb G\setminus\{0\})$, we have 
\begin{align}\label{eq:identityLpRellich}
\int_{\mathbb G} \frac{|\mathcal R_2 f|^p}{|x|^{p\al}} dx &= |c_{p,\al}|^p \int_{\mathbb G} \frac{|f|^p}{|x|^{p(2+ \al)}} dx + p\int_{\mathbb G} \frac1{|x|^{p\al}} R_p\lt(c_{p,\al}\frac{f}{|x|^2}, -\mathcal R_2 f\rt) dx \notag\\
&\quad +p |c_{p,\al}|^{p-2}c_{p,\al}\Bigg((p-1)\int_{\mathbb G} \frac{|f|^{p-2}}{|x|^{p(2+\al) -2}} \lt|\mathcal R|f|  + \frac{Q-p(2+\al)}{p|x|} |f|\rt|^2 dx\notag\\
&\qquad\qquad\qquad \qquad \qquad\qquad +\int_{\mathbb G} \frac{|f|^{p-4} (\Im(f \overline{\mathcal R f}))^2}{|x|^{p(2+\al) -2}} dx\Bigg),
\end{align}
for any $\al \in \R$, here $\Im z$ denotes the imagine part of a complex number $z \in \C$. As a consequence, we obtain a weighted $L^p-$Rellich inequality in $\G$ for any $\al \in (-(p-1)Q/p, (Q-2p)/p)$ and any complex-valued function $f\in C^\infty_0(\mathbb G\setminus\{0\})$ as follows
\begin{equation}\label{eq:LpweightedRellich}
c_{p,\al}^p \int_{\mathbb G} \frac{|f|^p}{|x|^{p(2+ \al)}} dx \leq \int_{\mathbb G} \frac{|\mathcal R_2 f|^p}{|x|^{p\al}} dx.
\end{equation}
Moreover, the constant $c_{p,\al}^p$ is sharp and equality holds in \eqref{eq:LpweightedRellich} if and only if $f=0$.
\end{theorem}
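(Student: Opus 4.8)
The plan is to follow the scheme of Theorem~\ref{identityRellich} and of the $L^p$-Hardy identity \eqref{eq:identityLpHardy}, but now starting from the weighted integral on the right-hand side and integrating by parts twice in the radial variable. Using the polar decomposition \eqref{eq:polar} and writing $r^{Q-p(2+\al)-1}=\frac{1}{Q-p(2+\al)}\bigl(r^{Q-p(2+\al)}\bigr)'$, the first integration by parts gives
\[
\int_{\G}\frac{|f|^p}{|x|^{p(2+\al)}}\,dx=-\frac{p}{Q-p(2+\al)}\,\Re\int_{\G}\frac{|f|^{p-2}f\,\overline{\mathcal R f}}{|x|^{p(2+\al)-1}}\,dx .
\]
A second integration by parts, writing $r^{Q-p(2+\al)}=\frac{1}{Q-p(2+\al)+1}\bigl(r^{Q-p(2+\al)+1}\bigr)'$ and moving the derivative onto $|f|^{p-2}f\,\overline{\mathcal R f}$, produces a second-order term $\Re\bigl(|f|^{p-2}f\,\overline{\mathcal R^2 f}\bigr)$ together with $\Re\bigl[\mathcal R(|f|^{p-2}f)\,\overline{\mathcal R f}\bigr]$. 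I would then replace $\mathcal R^2 f=\mathcal R_2 f-(Q-1)|x|^{-1}\mathcal R f$ so that $\mathcal R_2 f$ appears, the extra $(Q-1)|x|^{-1}\mathcal R f$ contribution being absorbed after one more integration by parts into terms of the same type.

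The delicate point---and the main obstacle---is the complex term $\Re[\mathcal R(|f|^{p-2}f)\,\overline{\mathcal R f}]$, which for $p\neq2$ does not collapse to $|\mathcal R f|^p$ as it does in the $L^2$ case. Writing $f=|f|e^{i\theta}$ and using $\mathcal R|f|=|f|^{-1}\Re(\overline f\,\mathcal R f)$ and $\Im(\overline f\,\mathcal R f)=|f|^2\,\mathcal R\theta$, a direct computation gives
\[
\Re\bigl[\mathcal R(|f|^{p-2}f)\,\overline{\mathcal R f}\bigr]=(p-1)|f|^{p-2}(\mathcal R|f|)^2+|f|^{p-4}\bigl(\Im(\overline f\,\mathcal R f)\bigr)^2 ,
\]
which is exactly the source of the two nonnegative remainder integrals carrying the factor $p\,|c_{p,\al}|^{p-2}c_{p,\al}$ in \eqref{eq:identityLpRellich}. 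The scalar piece $(p-1)|f|^{p-2}(\mathcal R|f|)^2$ must next be treated by one further integration by parts against the weight, which (using $|f|^{p-1}\mathcal R|f|=\tfrac1p\mathcal R(|f|^p)$) completes the square into $\bigl(\mathcal R|f|+\tfrac{Q-p(2+\al)}{p|x|}|f|\bigr)^2$; the coefficient here is precisely $d_{p,1+\al}=\tfrac{Q-p(2+\al)}p$, the constant furnished by the real weighted Hardy computation applied to the amplitude $|f|$.

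Once every lower-order contribution has been isolated, I would introduce the remainder $R_p\!\left(c_{p,\al}f|x|^{-2},-\mathcal R_2 f\right)$ by the same add-and-subtract device used to create $R_p$ in the proof of Theorem~\ref{LpHardy}; collecting the constants---a bookkeeping step that must reproduce $c_{p,\al}=\tfrac{(Q-2p-p\al)(Q+p'\al)}{pp'}$ from the product of the radial factor $Q-p(2+\al)$ and the amplitude factor $d_{p,1+\al}$---yields the identity \eqref{eq:identityLpRellich}. The inequality \eqref{eq:LpweightedRellich} is then immediate on the stated range: for $\al\in(-(p-1)Q/p,(Q-2p)/p)$ one has $Q-2p-p\al>0$ and $Q+p'\al>0$, hence $c_{p,\al}>0$, so every remainder is nonnegative by $R_p\ge0$ (convexity of $z\mapsto|z|^p$) and by $p-1>0$.

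For sharpness I would test with smooth truncations of the homogeneous function $r^{-(Q-2p-p\al)/p}=r^{-(Q-p(2+\al))/p}$, which satisfies $\mathcal R_2 f=-c_{p,\al}\,f/|x|^2$ exactly; as in Theorem~\ref{identityRellich}, both sides of \eqref{eq:LpweightedRellich} then diverge like $(-\ln\ep)\,\sigma(\mathfrak S)$ (the left-hand side with the factor $c_{p,\al}^p$) while all remainders stay $O(1)$, forcing the ratio to $c_{p,\al}^p$. Finally, equality in \eqref{eq:LpweightedRellich} requires the vanishing of the squared amplitude remainder, i.e. $\mathcal R|f|+d_{p,1+\al}|f|/|x|=0$, equivalently $\mathbb E|f|=-d_{p,1+\al}|f|$; by Lemma~\ref{homo} this makes $|f|$ positively homogeneous of order $-(Q-p(2+\al))/p$, which is incompatible with $f/|x|^{2+\al}\in L^p(\G)$ unless $f\equiv0$.
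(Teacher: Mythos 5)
Your proposal is correct and follows essentially the same route as the paper's proof: two radial integrations by parts in polar coordinates, substitution of $\mathcal R^2 f=\mathcal R_2 f-(Q-1)|x|^{-1}\mathcal Rf$ with the extra term reabsorbed into the main integral, the amplitude--phase identity $\Re\bigl[\mathcal R(|f|^{p-2}f)\,\overline{\mathcal R f}\bigr]=(p-1)|f|^{p-2}(\mathcal R|f|)^2+|f|^{p-4}\bigl(\Im(\overline f\,\mathcal R f)\bigr)^2$ (the paper gets the same split by regrouping $(p-2)|f|^{p-4}(\Re(f\overline{\mathcal Rf}))^2+|f|^{p-2}|\mathcal Rf|^2$), the completion of the square on the amplitude (the paper does this by applying Theorem~\ref{identityHardy} to $|f|^{p/2}$, which is your computation in disguise), the $R_p$ add-and-subtract, and identical sharpness and equality arguments via truncations of $r^{-(Q-p(2+\al))/p}$ and Lemma~\ref{homo}. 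One cosmetic slip in your bookkeeping aside: $c_{p,\al}$ is not the product of $Q-p(2+\al)$ with $d_{p,1+\al}$ (that would give $p\,d_{p,1+\al}^2$) but rather $c_{p,\al}=d_{p,1+\al}\,(Q-2-d_{p,1+\al})$, as your own test-function verification $\mathcal R_2\bigl(r^{-d_{p,1+\al}}\bigr)=-c_{p,\al}\,r^{-d_{p,1+\al}-2}$ already confirms, so this does not affect the argument.
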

\begin{proof}
Using the polar coordinate \eqref{eq:polar}, we have
\begin{align}\label{eq:LpR1}
&\frac{(Q-p(2+\al))(Q-p(2+\al)+1)}p\int_{\mathbb G} \frac{|f|^p}{|x|^{p(2+ \al)}} dx\notag\\
&=\frac{(Q-p(2+\al))(Q-p(2+\al)+1)}p \int_0^\infty r^{Q-p(2+\al)-1} \int_{\mathfrak S} |f(ry)|^p d\si(y) dr\notag\\
&= \frac{(Q-p(2+\al)+1)}p \int_0^\infty (r^{Q-p(2+\al)})'\int_{\mathfrak S} |f(ry)|^p d\si(y) dr\notag\\
&=-(Q-p(2+\al)+1) \Re\int_0^\infty r^{Q-p(2+\al)} \int_{\mathfrak S} |f(ry)|^{p-2} f(ry) \overline{\mathcal R f(ry)} d\si(y) dr\notag\\
&=- \Re\int_0^\infty (r^{Q-p(2+\al)+1})' \int_{\mathfrak S} |f(ry)|^{p-2} f(ry) \overline{\mathcal R f(ry)} d\si(y) dr\notag\\
&= \Re\int_0^\infty r^{Q-p(2+\al)+1} \int_{\mathfrak S} \Big((p-2) |f(ry)|^{p-4} (\Re(f(ry)\overline{\mathcal R f(ry)}))^2 + |f(ry)|^{p-2}  |\mathcal R f(ry)|^2 \notag\\
&\qquad\qquad\qquad\qquad \qquad \qquad\qquad + |f(ry)|^{p-2} f(ry) \overline{\mathcal R^2 f(ry)}\Big) d\si(y) dr\notag\\
&=\Re \int_{\mathbb G} \frac1{|x|^{p(2+\al) -2}} \lt((p-2) |f|^{p-4} (\Re(f \overline{\mathcal R f}))^2 + |f|^{p-2} |\mathcal R f|^2 + |f|^{p-2} f \overline{\mathcal R^2 f}\rt) dx \notag\\
&=\Re \int_{\mathbb G} \frac{|f|^{p-2} f}{|x|^{(p-1)(2+\al)}} \frac{\overline{\mathcal R_2 f}}{|x|^{\al}} dx -(Q-1) \Re \int_{\mathbb G} \frac{|f|^{p-2} f \overline{\mathcal R f}}{|x|^{p(2+\al)-1}} dx \notag\\
&\qquad\qquad\qquad + (p-1) \int_{\mathbb G} \frac{|f|^{p-4} (\Re(f \overline{\mathcal R f}))^2}{|x|^{p(2+\al) -2}} dx + \int_{\mathbb G} \frac{|f|^{p-4} (\Im(f \overline{\mathcal R f}))^2}{|x|^{p(2+\al) -2}} dx\notag\\
&=\Re \int_{\mathbb G} \frac{|f|^{p-2} f}{|x|^{(p-1)(2+\al)}} \frac{\overline{\mathcal R_2 f}}{|x|^{\al}} dx +\frac{(Q-1)(Q-p(2+\al))}p \int_{\mathbb G} \frac{|f|^{p}}{|x|^{p(2+\al)}} dx \notag\\
&\quad +  \frac{4(p-1)}{p^2}\int_{\mathbb G} \frac{(\mathcal R (|f|^{\frac p2}))^2}{|x|^{p(2+\al) -2}} dx + \int_{\mathbb G} \frac{|f|^{p-4} (\Im(f \overline{\mathcal R f}))^2}{|x|^{p(2+\al) -2}} dx,
\end{align}
here we use integration by parts for the last equality. Applying Theorem \ref{identityHardy} for $|f|^{\frac p2}$, we have
\begin{align*}
\int_{\mathbb G} \frac{(\mathcal R (|f|^{\frac p2}))^2}{|x|^{p(2+\al) -2}} dx& =\frac{(Q-p(2+\al))^2}4 \int_{\mathbb G} \frac{|f|^{p}}{|x|^{p(2+\al)}} dx \\
&\qquad\qquad +  \int_{\mathbb G} \frac1{|x|^{p(2+\al) -2}} \lt|\mathcal R|f|^{\frac p2} + \frac{Q-p(2+\al)}{2|x|} |f|^{\frac p2}\rt|^2 dx.
\end{align*}
Plugging this equality into \eqref{eq:LpR1}, we obtain
\begin{align}\label{eq:LpR2}
&\int_{\mathbb G} \frac{|f|^p}{|x|^{p(2+ \al)}} dx\notag\\
&=-\frac{1}{c_{p,\al}}\Re \int_{\mathbb G} \frac{|f|^{p-2} f}{|x|^{(p-1)(2+\al)}} \frac{\overline{\mathcal R_2 f}}{|x|^{\al}} dx -\frac1{c_{p,\al}}\int_{\mathbb G} \frac{|f|^{p-4} (\Im(f \overline{\mathcal R f}))^2}{|x|^{p(2+\al) -2}} dx\notag\\
&\qquad -\frac1{c_{p,\al}} \frac{4(p-1)}{p^2}\int_{\mathbb G} \frac1{|x|^{p(2+\al) -2}} \lt|\mathcal R|f|^{\frac p2} + \frac{Q-p(2+\al)}{2|x|} |f|^{\frac p2}\rt|^2 dx\notag\\
&=\frac{p-1}p \int_{\mathbb G} \frac{|f|^p}{|x|^{p(2+ \al)}} dx + \frac1p \frac1{|c_{p,\al}|^p} \int_{\mathbb G} \frac{|\mathcal R_2 f|^p}{|x|^{p\al}} dx -\int_{\mathbb G} R_p\lt(\frac{f}{|x|^{2+\al}}, -\frac1{c_{p,\al}} \frac{\mathcal R_2 f}{|x|^\al}\rt) dx\notag\\
&\quad -\frac{4(p-1)}{p^2c_{p,\al}}\int_{\mathbb G} \frac1{|x|^{p(2+\al) -2}} \lt|\mathcal R|f|^{\frac p2} + \frac{Q-p(2+\al)}{2|x|} |f|^{\frac p2}\rt|^2 dx\notag\\
&\quad -\frac1{c_{p,\al}}\int_{\mathbb G} \frac{|f|^{p-4} (\Im(f \overline{\mathcal R f}))^2}{|x|^{p(2+\al) -2}} dx.
\end{align}
The equality \eqref{eq:identityLpRellich} is now derived from \eqref{eq:LpR2} and the fact 
\[
\mathcal R (|f|^{\frac p2})  = \frac p2 |f|^{\frac p2-1} \mathcal R (|f|).
\]
The inequality \eqref{eq:LpweightedRellich} is a direct consequence of \eqref{eq:identityLpRellich} since $c_{p,\al} >0$ under the condition of $\al$. The sharpness of \eqref{eq:LpweightedRellich} is verified by using approximations of the function $ r^{-(Q-p(2+\al))/p}$. Suppose that there is equality in \eqref{eq:LpweightedRellich} for some function $f$. From \eqref{eq:identityLpRellich}, we must have
\[
\mathcal R (|f|) + \frac{Q-p(2+\al)}{p|x|} |f| =0 \quad \Longleftrightarrow \mathbb E(|f|) = -\frac{Q-p(2+\al)}{p} |f|.
\]
Using Lemma \ref{homo} implies that $|f|$ is positively homogeneous of degree $-(Q-p(2+\al))/p$ which forces $f = 0$ since $|f|/|x|^{2+\al}$ is in $L^p(\mathbb G)$.
\end{proof}
We next use Theorem \ref{LpHardy} and Theorem \ref{LpRellich} to establish the higher order versions of the weighted $L^p-$Hardy and weighted $L^p-$Rellich inequalities. It gives an $L^p$ analogue of the weighted $L^2-$Hardy--Rellich type inequalities obtained in Theorem \ref{higherorderRellich}. It also gives the generalization of the inequalities \eqref{eq:Lphighereven} and \eqref{eq:Lphigherodd} to the homogeneous groups. In order to do this, we first establish the following interesting equalities.
\begin{proposition}\label{higherorderLp}
Let $\mathbb G$ be a homogeneous group of homogeneous dimension $Q$. Let $|\cdot|$ be any homogeneous quasi-norm on $\mathbb G$. Let $k$ be a positive integer, $1 < p < Q/k$ and let $\al$ be any real number. For any complex-valued function $f \in C_0^\infty(\mathbb G\setminus\{0\})$, we have
\begin{align}\label{eq:Lpeven}
&\int_{\mathbb G} \frac{|\mathcal R_2^l f|^p}{|x|^{p\al}} dx\notag\\
&=\lt|\prod_{i=0}^{l-1}c_{p,2i+ \al}\rt|^p\int_{\mathbb G} \frac{|f|^p}{|x|^{p(k+\al)}} dx + p\int_{\mathbb G} \frac1{|x|^{p\al}} R_p\lt(c_{p,\al}\frac{\mathcal R_2^{l-1} f}{|x|^2}, -\mathcal R_2^l f\rt) dx \notag\\
&\quad + p \sum_{j=1}^{l-1} \lt|\prod_{i=0}^{j-1} c_{p,2i+\al}\rt|^p\int_{\mathbb G} \frac1{|x|^{p(2j+ \al)}} R_p\lt(c_{p,2j+\al}\frac{\mathcal R_2^{l-j-1} f}{|x|^2}, -\mathcal R_2^{l-j} f\rt) dx \notag\\
&\quad +p |c_{p,\al}|^{p-2}c_{p,\al}\Bigg[(p-1)\int_{\mathbb G} \frac{|\mathcal R_2^{l-1}f|^{p-2}}{|x|^{p(2+\al) -2}} \lt|\mathcal R|\mathcal R_2^{l-1}f|  + \frac{Q-p(2+\al)}{p|x|} |\mathcal R_2^{l-1}f|\rt|^2 dx\notag\\
&\qquad\qquad\qquad \qquad \qquad  +\int_{\mathbb G} \frac{|\mathcal R_2^{l-1}f|^{p-4} (\Im(\mathcal R_2^{l-1}f \,\overline{\mathcal R \mathcal R_2^{l-1}f}))^2}{|x|^{p(2+\al) -2}} dx\Bigg]\notag\\
&\quad + p \sum_{j=1}^{l-1}\lt|\prod_{i=0}^{j-1} c_{p,2i+\al}\rt|^p |c_{p,2j+\al}|^{p-2}c_{p,2j+\al} \Bigg[\int_{\mathbb G} \frac{|\mathcal R_2^{l-j-1}f|^{p-4} (\Im(\mathcal R_2^{l-j-1}f \,\overline{\mathcal R \mathcal R_2^{l-j-1}f}))^2}{|x|^{p(2(j+1)+\al) -2}} dx\notag\\
&\quad+ (p-1)\int_{\mathbb G} \frac{|\mathcal R_2^{l-j-1}f|^{p-2}}{|x|^{p(2(j+1)+\al) -2}} \lt|\mathcal R|\mathcal R_2^{l-j-1}f|  + \frac{Q-p(2(j+1) +\al)}{p|x|} |\mathcal R_2^{l-j-1}f|\rt|^2 dx\Bigg].
\end{align}
if $k =2l, l\geq 2$,  and
\begin{align}\label{eq:Lpodd}
&\int_{\mathbb G} \frac{|\mathcal R(\mathcal R_2^l f)|^p}{|x|^{p\al}} dx \notag\\
&=|d_{p,\al}|^p\lt|\prod_{i=0}^{l-1}c_{p,2i+ 1+ \al}\rt|^p\int_{\mathbb G} \frac{|f|^p}{|x|^{p(k+\al)}} dx + p \int_{\mathbb G}\frac1{|x|^{p\al}} R_p\lt(-d_{p,\al}\frac{\mathcal R_2^l f}{|x|}, \mathcal R \mathcal R_2^l f\rt) dx\notag\\
&\quad + p|d_{p,\al}|^p  \int_{\mathbb G} \frac1{|x|^{p(1+\al)}} R_p\lt(c_{p,1+\al}\frac{\mathcal R_2^{l-1} f}{|x|^2}, -\mathcal R_2^l f\rt) dx \notag\\
&\quad + p|d_{p,\al}|^p \sum_{j=1}^{l-1} \lt|\prod_{i=0}^{j-1} c_{p,2i+1+\al}\rt|^p\int_{\mathbb G} \frac1{|x|^{p(2j+ 1+\al)}} R_p\lt(c_{p,2j+1+ \al}\frac{\mathcal R_2^{l-j-1} f}{|x|^2}, -\mathcal R_2^{l-j} f\rt) dx \notag\\
&\quad +p |d_{p,\al}|^p |c_{p,1+\al}|^{p-2}c_{p,1+\al}\Bigg[(p-1)\int_{\mathbb G} \frac{|\mathcal R_2^{l-1}f|^{p-2}}{|x|^{p(3+\al) -2}} \lt|\mathcal R|\mathcal R_2^{l-1}f|  + \frac{Q-p(3+\al)}{p|x|} |\mathcal R_2^{l-1}f|\rt|^2 dx\notag\\
&\qquad\qquad\qquad \qquad \qquad \qquad\qquad +\int_{\mathbb G} \frac{|\mathcal R_2^{l-1}f|^{p-4} (\Im(\mathcal R_2^{l-1}f \,\overline{\mathcal R \mathcal R_2^{l-1}f}))^2}{|x|^{p(3+\al) -2}} dx\Bigg]\notag\\
&\quad + p |d_{p,\al}|^p\sum_{j=1}^{l-1}\lt|\prod_{i=0}^{j-1} c_{p,2i+1+\al}\rt|^p |c_{p,2j+1+\al}|^{p-2}c_{p,2j+1+\al} \times \notag\\
&\qquad \qquad \times \Bigg[\int_{\mathbb G} \frac{|\mathcal R_2^{l-j-1}f|^{p-4} (\Im(\mathcal R_2^{l-j-1}f \,\overline{\mathcal R \mathcal R_2^{l-j-1}f}))^2}{|x|^{p(2j+3 +\al) -2}} dx\notag\\
&\quad\,+ (p-1)\int_{\mathbb G} \frac{|\mathcal R_2^{l-j-1}f|^{p-2}}{|x|^{p(2j+3+\al) -2}} \lt|\mathcal R|\mathcal R_2^{l-j-1}f|  + \frac{Q-p(2j+3 +\al)}{p|x|} |\mathcal R_2^{l-j-1}f|\rt|^2 dx\Bigg].
\end{align}
if $k =2l+1, l\geq 1$ with remark that the terms concerning the sum from $1$ to $l-1$ do not appear if $l=1$. 
\end{proposition}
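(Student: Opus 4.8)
The plan is to derive both identities by iterating the two basic identities already established, namely the weighted $L^p$-Hardy identity \eqref{eq:identityLpHardy} of Theorem \ref{LpHardy} and the weighted $L^p$-Rellich identity \eqref{eq:identityLpRellich} of Theorem \ref{LpRellich}. Since $k\geq 3$ in both cases ($k=2l\geq 4$ when $l\geq 2$, and $k=2l+1\geq 3$ when $l\geq 1$), the hypothesis $1<p<Q/k$ guarantees $1<p<Q/2$, so Theorem \ref{LpRellich} applies, and it applies to every iterated derivative $\mathcal R_2^{j}f$ as well, since $\mathcal R_2$ has coefficients smooth on $\mathbb G\setminus\{0\}$ and hence $\mathcal R_2^{j}f\in C_0^\infty(\mathbb G\setminus\{0\})$ whenever $f$ is. Both base identities hold for arbitrary $\al\in\R$, so the weight parameter may be shifted freely during the iteration.

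I would prove \eqref{eq:Lpeven} by induction on $l$, keeping $\al$ a free parameter. The base case $l=1$ is precisely \eqref{eq:identityLpRellich}, where the $j$-sums are empty. For the inductive step, write $\mathcal R_2^l f=\mathcal R_2(\mathcal R_2^{l-1}f)$ and apply Theorem \ref{LpRellich} to $\mathcal R_2^{l-1}f$ with weight parameter $\al$. This produces the main term $|c_{p,\al}|^p\int_{\mathbb G}|\mathcal R_2^{l-1}f|^p|x|^{-p(2+\al)}\,dx$, together with exactly the ``pure'' remainder $p\int_{\mathbb G}|x|^{-p\al}R_p(c_{p,\al}|x|^{-2}\mathcal R_2^{l-1}f,-\mathcal R_2^l f)\,dx$ and the correction term with prefactor $p|c_{p,\al}|^{p-2}c_{p,\al}$ built from $\mathcal R|\mathcal R_2^{l-1}f|$ and $\Im(\mathcal R_2^{l-1}f\,\overline{\mathcal R\mathcal R_2^{l-1}f})$; these match the first two remainder terms in \eqref{eq:Lpeven}. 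To the main term I apply the induction hypothesis for the pair $(l-1,\al+2)$. The decisive bookkeeping identity is $c_{p,2i+(\al+2)}=c_{p,2(i+1)+\al}$, which combines the accumulated product $\prod_{i=0}^{l-2}c_{p,2i+(\al+2)}$ with the factor $c_{p,\al}=c_{p,2\cdot 0+\al}$ into $\prod_{i=0}^{l-1}c_{p,2i+\al}$, while the weight becomes $p(2(l-1)+(\al+2))=p(2l+\al)=p(k+\al)$. After the relabelling $j\mapsto j+1$ and the weight shift $p(2j+(\al+2))=p(2(j+1)+\al)$, the remainder and correction sums of the hypothesis line up with the $j$-sums of \eqref{eq:Lpeven}.

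For the odd identity \eqref{eq:Lpodd} I would reduce directly to the even case rather than induct separately. First apply Theorem \ref{LpHardy} to $g=\mathcal R_2^l f$ with weight parameter $\al$, using $\mathcal R g=\mathcal R(\mathcal R_2^l f)$; this yields $|d_{p,\al}|^p\int_{\mathbb G}|\mathcal R_2^l f|^p|x|^{-p(1+\al)}\,dx$ plus the single remainder $p\int_{\mathbb G}|x|^{-p\al}R_p(-d_{p,\al}|x|^{-1}\mathcal R_2^l f,\mathcal R\mathcal R_2^l f)\,dx$, which is the second term of \eqref{eq:Lpodd}. To the first term I apply the already proven even identity \eqref{eq:Lpeven} with $\al$ replaced by $1+\al$. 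Here $c_{p,2i+(1+\al)}=c_{p,2i+1+\al}$ yields the product $\prod_{i=0}^{l-1}c_{p,2i+1+\al}$, and the weight becomes $p(2l+(1+\al))=p(k+\al)$ since $k=2l+1$; multiplying every resulting term by the overall factor $|d_{p,\al}|^p$ reproduces all remaining terms of \eqref{eq:Lpodd}, the empty $j$-sums when $l=1$ accounting for the stated remark.

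The routine part is each single application of the two base identities. The genuine difficulty, and where care is essential, lies in the bookkeeping of the two shift operations—$\al\mapsto\al+2$ in the Rellich iteration and $\al\mapsto 1+\al$ in the Hardy reduction—and in checking that every $R_p$-remainder and every correction term (including the delicate pieces built from $\mathcal R|\mathcal R_2^{l-j-1}f|$ and from $\Im(\mathcal R_2^{l-j-1}f\,\overline{\mathcal R\mathcal R_2^{l-j-1}f})$) inherits the correct accumulated constant $|\prod_{i=0}^{j-1}c_{p,2i+\al}|^p$, respectively the factor $|c_{p,2j+\al}|^{p-2}c_{p,2j+\al}$, and the correct weight exponent after the index shifts. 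Verifying that these product and weight identities hold term by term is exactly what pins down the stated constants, and no analytic input beyond Theorems \ref{LpHardy} and \ref{LpRellich} is needed.
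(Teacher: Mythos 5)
Your proposal is correct and follows exactly the paper's own route: the paper proves \eqref{eq:Lpeven} by consecutive applications of the weighted $L^p$-Rellich identity \eqref{eq:identityLpRellich} and obtains \eqref{eq:Lpodd} by combining \eqref{eq:Lpeven} with the weighted $L^p$-Hardy identity \eqref{eq:identityLpHardy}, which is precisely your induction on $l$ with the shift $\al\mapsto\al+2$ and your reduction via $\al\mapsto 1+\al$. Your write-up in fact supplies more detail than the paper (the bookkeeping $c_{p,2i+(\al+2)}=c_{p,2(i+1)+\al}$, the index relabelling, and the empty-sum remark for $l=1$), all of which checks out.
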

\begin{proof}
The equality \eqref{eq:Lpeven} is proved by using consecutively \eqref{eq:identityLpRellich}. The equality \eqref{eq:Lpodd} is consequence of \eqref{eq:Lpeven} and \eqref{eq:identityLpHardy}.
\end{proof}
By dropping the nonnegative remainder terms in \eqref{eq:Lpeven} and \eqref{eq:Lpodd}, we obtain the following higher order weighted $L^p-$Hardy-Rellich type inequalities.
\begin{theorem}\label{eq:HRhigher}
Let $G$ be a homogeneous group of homogeneous dimension $Q$. Let $|\cdot|$ be any homogeneous quasi-norm on $\mathbb G$ and let $\al$ be any real number. For any complex-valued function $f \in C_0^\infty(\mathbb G\setminus\{0\})$, we have
\begin{equation}\label{eq:HRLpeven}
\lt(\prod_{i=0}^{k-1}c_{p,2i+ \al}\rt)^p\int_{\mathbb G} \frac{|f|^p}{|x|^{p(2k+\al)}} dx \leq \int_{\mathbb G} \frac{|\mathcal R_2^k f|^p}{|x|^{p\al}} dx,
\end{equation}
if $1 < p < Q/2k$ and $\al \in (-Q(p-1)/p,(Q-2pk)/p)$, and 
\begin{equation}\label{eq:HRLpodd}
d_{p,\al}^p \lt(\prod_{i=0}^{k-1}c_{p,2i+1+ \al}\rt)^p\int_{\mathbb G} \frac{|f|^p}{|x|^{p(2k+1+\al)}} dx \leq \int_{\mathbb G} \frac{|\mathcal R \mathcal R_2^k f|^p}{|x|^{p\al}} dx
\end{equation}
if $1< p< Q/(2k+1)$ and $\al \in (-(Q+p')/p' , (Q-p(2k+1))/p)$. The inequalities \eqref{eq:HRLpeven} and \eqref{eq:HRLpodd} are sharp and equality holds if and only if $f =0$.
\end{theorem}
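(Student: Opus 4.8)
The plan is to read both inequalities off from the exact identities already established in Proposition~\ref{higherorderLp}. Writing $k$ for the integer called $l$ there, the inequality \eqref{eq:HRLpeven} is nothing but \eqref{eq:Lpeven} with every term on the right-hand side discarded except the leading one, and \eqref{eq:HRLpodd} is \eqref{eq:Lpodd} stripped of its remainder. The entire argument therefore reduces to checking that each discarded term is nonnegative, and this hinges solely on the sign of the constants $c_{p,2i+\al}$ in the even case and of $c_{p,2i+1+\al}$ together with $d_{p,\al}$ in the odd case. So the first thing I would do is confirm that the stated ranges of $\al$ force all of these constants to be strictly positive. With $c_{p,\be}=\frac{(Q-2p-p\be)(Q+p'\be)}{pp'}$, positivity of $c_{p,2i+\al}$ for every $i=0,\dots,k-1$ is equivalent to $\al>-Q(p-1)/p$ (the factor $Q+p'\al$ at $i=0$) and $\al<(Q-2pk)/p$ (the factor $Q-2p-p(2(k-1)+\al)$ at $i=k-1$), which is precisely the hypothesis in \eqref{eq:HRLpeven}.

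In the odd case the binding lower constraint comes from $c_{p,1+\al}>0$, i.e.\ $\al>-(Q+p')/p'$, while $d_{p,\al}=(Q-p(1+\al))/p>0$ holds automatically on the asserted interval since its upper endpoint $(Q-p(2k+1))/p$ is already below $(Q-p)/p$; the binding upper constraint $c_{p,2k-1+\al}>0$ gives $\al<(Q-p(2k+1))/p$, so the two constraints reproduce the range in \eqref{eq:HRLpodd}. Once positivity is in hand, nonnegativity of every remainder term is immediate: the terms built from $R_p$ are nonnegative by the convexity of $z\mapsto|z|^p$ recorded just after \eqref{eq:constant}, and the remaining terms carry prefactors of the form $|c_{p,\be}|^{p-2}c_{p,\be}=|c_{p,\be}|^{p-1}\,\sign(c_{p,\be})$ (and the global factor $|d_{p,\al}|^p\ge0$ in the odd case), which are positive whenever $c_{p,\be}>0$; these multiply integrals of $|\cdots|^2$ and of $(\Im(\cdots))^2$, which are manifestly nonnegative. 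Dropping all of them from \eqref{eq:Lpeven} and \eqref{eq:Lpodd} leaves exactly the inequalities with leading constants $\bigl(\prod_{i=0}^{k-1}c_{p,2i+\al}\bigr)^p$ and $d_{p,\al}^p\bigl(\prod_{i=0}^{k-1}c_{p,2i+1+\al}\bigr)^p$.

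For sharpness I would recycle the cut-off scheme from the proof of Theorem~\ref{identityRellich}: test on $f_\ep$ obtained by truncating the critical homogeneity $r^{-(Q-2pk-p\al)/p}$ in the even case and $r^{-(Q-p(2k+1)-p\al)/p}$ in the odd case, and check that both sides grow like $(-\ln\ep)\,\sigma(\mathfrak S)$ up to $O(1)$, so that the ratio tends to the claimed constant. For the equality statement, equality in \eqref{eq:HRLpeven} forces every remainder term in \eqref{eq:Lpeven} to vanish; in particular the innermost term of the last sum, where the operator reduces to $f$ itself, yields $\mathcal R|f|+\frac{Q-p(2k+\al)}{p|x|}|f|=0$, that is $\mathbb E|f|=-\frac{Q-p(2k+\al)}{p}|f|$. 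Lemma~\ref{homo} then makes $|f|$ positively homogeneous of order $-(Q-2pk-p\al)/p$, which is incompatible with $|f|/|x|^{2k+\al}\in L^p(\mathbb G)$ (the radial integral becomes logarithmically divergent) unless $f\equiv0$; the odd case is identical, with $f$ replaced by the corresponding innermost expression.

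The main obstacle I anticipate is bookkeeping rather than conceptual: one must verify that the range conditions simultaneously guarantee positivity of all $k$ constants $c_{p,2i+\al}$ (resp.\ $c_{p,2i+1+\al}$), not merely the first and last, and that the resulting endpoints match those written in \eqref{eq:HRLpeven} and \eqref{eq:HRLpodd}. The clean way through is to observe that $\be\mapsto c_{p,\be}$ is a downward parabola, vanishing at $\be=-Q/p'$ and $\be=(Q-2p)/p$ and positive strictly between them; hence positivity along the arithmetic progression $\{2i+\al\}_{i=0}^{k-1}$ (resp.\ $\{2i+1+\al\}_{i=0}^{k-1}$) is controlled entirely by its two endpoints, so the interior indices require no separate check. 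This single observation is what makes the argument go through without case analysis.
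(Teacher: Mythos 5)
Your proposal is correct and follows essentially the same route as the paper: both derive \eqref{eq:HRLpeven} and \eqref{eq:HRLpodd} by dropping the nonnegative remainder terms in the identities \eqref{eq:Lpeven} and \eqref{eq:Lpodd} of Proposition \ref{higherorderLp}, verify sharpness with truncated approximations of $r^{-(Q-p(2k+\al))/p}$ and $r^{-(Q-p(2k+1+\al))/p}$, and settle the equality case by showing $|f|$ would be positively homogeneous of the critical order via Lemma \ref{homo}, contradicting $L^p$-integrability unless $f=0$. Your downward-parabola observation for $\be\mapsto c_{p,\be}$, reducing the positivity check for all $k$ constants to the two endpoint indices, is a clean justification of what the paper merely asserts as evident, and your endpoint computations correctly reproduce the stated ranges of $\al$.
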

\begin{proof}
The inequalities \eqref{eq:HRLpeven} and \eqref{eq:HRLpodd} are evidently consequences of  \eqref{eq:Lpeven} and \eqref{eq:Lpeven} respectively since $c_{p,2i+\al} \geq 0$ and $c_{p,2i+1+\al} \geq 0$ for $0\leq i \leq k-1$ in this case. The sharpness of the \eqref{eq:HRLpeven} and \eqref{eq:HRLpodd} is verified by approximating the function $r^{-(Q-p(2k+\al))/p}$ and 
 the function $r^{-(Q-p(2k+1+\al))/p}$ respectively where. Moreover, if equality holds for a function $f$, then by Proposition \ref{higherorderLp}, we must have that $|f|$ is positive homogeneous of degree $-(Q-p(2k+\al))/p$ (corresponding to \eqref{eq:HRLpeven}) and $-(Q-p(2k+1+\al))/p$ (corresponding to \eqref{eq:HRLpodd}) which force $f=0$ by the condition of $L^p-$integrability.
\end{proof}

Theorem \ref{eq:HRhigher} implies the following uncertainly principles.

\begin{corollary}\label{uncertainly1}
Let $k$ be a positive integer and $p >1$. Let $\mathbb G$ be a homogeneous group of homogeneous dimension $Q > kp$, and let $|\cdot|$ be any homogeneous quasi-norm on $\mathbb G$. Then for any complex-valued function $f\in C_0^\infty(\mathbb G \setminus\{0\})$, we have
\begin{equation}\label{eq:uncertaineven}
\lt(\prod_{i=0}^{l-1} c_{p,2i+\al}\rt) \int_{\mathbb G} |f|^2 dx \leq \lt(\int_{\mathbb G} \frac{|\mathcal R_2^l f|^p}{|x|^{p\al}} dx\rt)^{\frac1p} \lt(\int_{\mathbb G} |f|^{p'} |x|^{p'(2l+\al)} dx\rt)^{\frac1{p'}}
\end{equation}
if $k =2l$, $l \geq 1$ and for any $\al \in (-Q(p-1)/p,(Q-2pl)/p)$, and
\begin{equation}\label{eq:uncertainodd}
\lt|\frac{d_{p,\al}}{c_{p,2l+1+\al}} \prod_{i=0}^{l} c_{p,2i+1+\al}\rt| \int_{\mathbb G} |f|^2 dx \leq \lt(\int_{\mathbb G} \frac{|\mathcal R \mathcal R_2^l f|^p}{|x|^{p\al}} dx\rt)^{\frac1p} \lt(\int_{\mathbb G} |f|^{p'} |x|^{p'(2l+1+\al)} dx\rt)^{\frac1{p'}}
\end{equation}
if $k=2l + 1$, $l\geq 0$ and for any $\al \in (-(Q+p')/p' , (Q-p(2l+1))/p)$ if $l\geq 1$ and for any $\al \in \R$ if $l=0$.
\end{corollary}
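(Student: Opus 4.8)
The plan is to derive both uncertainty principles directly from the higher order weighted $L^p$-Hardy--Rellich inequalities of Theorem \ref{eq:HRhigher} by inserting a single Hölder step. The guiding observation is that the weight $|x|^{p(2l+\al)}$ in \eqref{eq:HRLpeven} (respectively $|x|^{p(2l+1+\al)}$ in \eqref{eq:HRLpodd}) is exactly the conjugate weight needed to recover $\int_{\mathbb G}|f|^2\,dx$ through Hölder's inequality with exponents $p$ and $p'$. Throughout, the integer $k$ appearing in Theorem \ref{eq:HRhigher} is to be taken equal to the parameter $l$ of the corollary.

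For the even case \eqref{eq:uncertaineven}, I would first write
\[
\int_{\mathbb G} |f|^2 dx = \int_{\mathbb G} \frac{|f|}{|x|^{2l+\al}}\,\cdot\,|f|\,|x|^{2l+\al}\, dx
\]
and apply Hölder's inequality with exponents $p,p'$ to bound this by
\[
\left(\int_{\mathbb G} \frac{|f|^p}{|x|^{p(2l+\al)}}\, dx\right)^{1/p}\left(\int_{\mathbb G} |f|^{p'} |x|^{p'(2l+\al)}\, dx\right)^{1/p'}.
\]
Applying \eqref{eq:HRLpeven} (with its integer taken to be $l$) to the first factor replaces $\int_{\mathbb G} |x|^{-p(2l+\al)}|f|^p dx$ by $(\prod_{i=0}^{l-1} c_{p,2i+\al})^{-p}\int_{\mathbb G} |x|^{-p\al}|\mathcal R_2^l f|^p dx$ inside the $1/p$-th power; collecting the constant on the left then yields \eqref{eq:uncertaineven}. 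The admissible range $\al \in (-Q(p-1)/p,(Q-2pl)/p)$ is precisely the hypothesis of \eqref{eq:HRLpeven} for that integer, so the step is valid.

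For the odd case \eqref{eq:uncertainodd}, the argument is identical but uses the weight $|x|^{2l+1+\al}$ and invokes \eqref{eq:HRLpodd} (with integer $l$) on the first Hölder factor. The only extra point is a cosmetic simplification of the constant: since the $i=l$ term of $\prod_{i=0}^{l} c_{p,2i+1+\al}$ equals $c_{p,2l+1+\al}$, one has
\[
\frac{d_{p,\al}}{c_{p,2l+1+\al}}\prod_{i=0}^{l} c_{p,2i+1+\al} = d_{p,\al}\prod_{i=0}^{l-1} c_{p,2i+1+\al},
\]
and the right side is exactly the constant multiplying $\int_{\mathbb G} |x|^{-p(2l+1+\al)}|f|^p dx$ in \eqref{eq:HRLpodd}. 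The degenerate case $l=0$ then amounts to combining Hölder with the weighted $L^p$-Hardy inequality \eqref{eq:LpweightedHardy}, which accounts for the relaxed range $\al\in\R$ stated there.

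I do not anticipate any genuine difficulty: the whole content is one Hölder estimate layered on the already-proved Hardy--Rellich inequalities. The only care required is the index bookkeeping between the corollary's $l$ and the theorem's $k$ (with the total order $k=2l$ or $2l+1$) and the constant identity displayed above for the odd case.
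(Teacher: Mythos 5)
Your proposal is correct and follows essentially the same route as the paper: the paper's proof is exactly the factorization $|f|^2 = \frac{|f|}{|x|^{2l+\al}}\cdot |f|\,|x|^{2l+\al}$, H\"older's inequality with exponents $p,p'$, and then an application of \eqref{eq:HRLpeven} (respectively \eqref{eq:HRLpodd}) to the first factor, using the positivity of the constants $c_{p,2i+\al}$ on the stated range of $\al$. Your explicit verification of the constant identity $\frac{d_{p,\al}}{c_{p,2l+1+\al}}\prod_{i=0}^{l} c_{p,2i+1+\al} = d_{p,\al}\prod_{i=0}^{l-1} c_{p,2i+1+\al}$ and your reduction of the $l=0$ case to the weighted $L^p$-Hardy inequality \eqref{eq:LpweightedHardy} fill in details the paper leaves to ``the similar way,'' so nothing is missing.
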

\begin{proof}
We first prove \eqref{eq:uncertaineven}. By H\"older inequality, we have
\begin{align*}
\int_{\mathbb G} |f|^2 dx &= \int_{\mathbb G} \frac{|f|}{|x|^{2l+\al}} |f| |x|^{2l+\al} dx \leq \lt(\int_{\mathbb G} \frac{|f|^p}{|x|^{p(2l+\al)}}dx\rt)^{\frac1p} \lt(\int_{\mathbb G} |f|^{p'} |x|^{p'(2l+\al)} dx\rt)^{\frac1{p'}}.
\end{align*}
Now, applying the inequality \eqref{eq:HRLpeven} to the previous inequality, we get \eqref{eq:uncertaineven} with remark that $c_{2i+\al} >0$ for $0\leq i\leq l-1$ for $\al \in (-Q(p-1)/p,(Q-2pl)/p)$.

The inequality \eqref{eq:uncertainodd} is proved by the similar way.
\end{proof}
Corollary \ref{uncertainly1} contains an uncertainly principle on the homogeneous group established by Ruzhansky and Suragan \cite[Corollary $3.4$]{RS2017} which corresponds to the case $k=1$ and $\al =0$. For the other cases, the inequalities in Corollary \ref{uncertainly1} seem to be new.

We next establish a $L^p$ version of Theorem \ref{onetwo} for $p\in (1,Q/2)$. To do this, we will need the following equality.

\begin{proposition}\label{onetwoLplemma}
Let $\mathbb G$ be a homogeneous group of homogeneous dimension $Q$. Let $|\cdot|$ be any homogeneous quasi-norm on $\mathbb G$. Let $|\cdot|$ be any homogeneous quasi-norm on $\mathbb G$ and $1< p < Q$. Then for any complex-valued function $f\in C_0^\infty(\mathbb G\setminus\{0\})$, we have
\begin{align}\label{eq:abcd}
\int_{\mathbb G} &\frac1{|x|^{p\al}} \lt|\mathcal R f + \frac{Q-1}r f\rt|^p dx \notag\\
&= \frac{|Q+p'\al|^p}{(p')^p} \int_{\mathbb G} \frac{|f|^p}{|x|^{p(1+\al)}} dx + p \int_{\mathbb G} \frac1{|x|^{p\al}} R_p\lt(\frac{Q+p'\al}{p'} \frac{f}{|x|}, \mathcal R f + \frac{Q-1}{|x|} f\rt) dx
\end{align}
for any $\al \in \R$ with $p' = p/(p-1)$.
\end{proposition}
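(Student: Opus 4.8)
The plan is to follow the proof of Theorem~\ref{LpHardy} almost verbatim, with the single derivative $\mathcal R f$ replaced by the divergence-type combination $g:=\mathcal R f+\frac{Q-1}{|x|}f$. Throughout write $\kappa:=\frac{Q+p'\al}{p'}=\frac{(p-1)Q+p\al}{p}$, and note first the pointwise identity along each ray $r\mapsto f(ry)$,
\[
\Re\!\left(|f|^{p-2}f\,\overline{g}\right)=\frac1p\,\mathcal R(|f|^p)+\frac{Q-1}{|x|}|f|^p,
\]
which follows from $\Re(|f|^{p-2}f\,\overline{\mathcal R f})=\frac1p\mathcal R(|f|^p)$ together with the definition of $g$.

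First I would compute the cross term in polar coordinates. Using \eqref{eq:polar}, multiplying by the weight $|x|^{-(p(1+\al)-1)}$ and integrating by parts once in the radial variable, the two contributions above give
\[
\Re\int_{\mathbb G}\frac{|f|^{p-2}f\,\overline{g}}{|x|^{p(1+\al)-1}}\,dx=\left(-\frac{Q-p(1+\al)}{p}+(Q-1)\right)\int_{\mathbb G}\frac{|f|^p}{|x|^{p(1+\al)}}\,dx=\kappa\int_{\mathbb G}\frac{|f|^p}{|x|^{p(1+\al)}}\,dx,
\]
where the boundary terms vanish because $f\in C_0^\infty(\mathbb G\setminus\{0\})$, and the coefficient collapses exactly to $\kappa$. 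This is the analogue of the chain of equalities in \eqref{eq:LpHardy1}.

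Assuming $\kappa\neq0$, I would then set $\xi=\frac{f}{|x|^{1+\al}}$ and $\eta=\frac1\kappa\frac{g}{|x|^\al}$ and expand $\int_{\mathbb G}R_p(\xi,\eta)\,dx$ via the definition of $R_p$: the term $\frac1p|\eta|^p$ gives $\frac1{p|\kappa|^p}\int|x|^{-p\al}|g|^p\,dx$, the term $\frac{p-1}p|\xi|^p$ gives $\frac{p-1}p\int|x|^{-p(1+\al)}|f|^p\,dx$, and the cross term $\Re(|\xi|^{p-2}\xi\overline{\eta})=\frac1\kappa|x|^{-(p(1+\al)-1)}\Re(|f|^{p-2}f\,\overline{g})$ integrates, by the identity just proved, to $\int|x|^{-p(1+\al)}|f|^p\,dx$. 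Solving the resulting linear relation for $\int|x|^{-p\al}|g|^p\,dx$, multiplying by $p|\kappa|^p$, and finally using the homogeneity $R_p(\lambda\xi,\lambda\eta)=|\lambda|^pR_p(\xi,\eta)$ with $\lambda=\kappa$ to rewrite $|\kappa|^pR_p(\xi,\eta)=|x|^{-p\al}R_p\!\left(\kappa\frac{f}{|x|},g\right)$ yields exactly \eqref{eq:abcd}. The degenerate case $\kappa=0$ (that is, $Q+p'\al=0$) is immediate, since then $R_p(0,g)=\frac1p|g|^p$ and both sides of \eqref{eq:abcd} reduce to $\int|x|^{-p\al}|g|^p\,dx$.

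The only point requiring care is the bookkeeping of the weight exponents in the radial integration by parts and checking that the coefficient $-\frac{Q-p(1+\al)}{p}+(Q-1)$ simplifies to $\frac{Q+p'\al}{p'}$; there is no genuine analytic obstacle because $f$ is compactly supported away from the origin, so all boundary contributions vanish and the manipulations are identical in spirit to those in Theorems~\ref{LpHardy} and~\ref{LpRellich}.
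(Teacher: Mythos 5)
Your proof is correct and takes essentially the same route as the paper's: polar coordinates \eqref{eq:polar} with a single radial integration by parts to show that the cross term $\Re\int_{\mathbb G}|x|^{-(p(1+\al)-1)}|f|^{p-2}f\,\overline{\bigl(\mathcal Rf+\tfrac{Q-1}{|x|}f\bigr)}\,dx$ equals $\tfrac{Q+p'\al}{p'}\int_{\mathbb G}|x|^{-p(1+\al)}|f|^p\,dx$, followed by the convexity expansion of $R_p$ and the homogeneity $R_p(\lambda\xi,\lambda\eta)=|\lambda|^pR_p(\xi,\eta)$. The only (welcome) difference is bookkeeping: by integrating by parts on $\mathcal R(|f|^p)$ directly via the pointwise identity $\Re(|f|^{p-2}f\,\overline{g})=\tfrac1p\mathcal R(|f|^p)+\tfrac{Q-1}{|x|}|f|^p$, you avoid the paper's division by $Q-p(1+\al)$ in its first step, so your argument also covers the degenerate cases $Q=p(1+\al)$ and $Q+p'\al=0$ explicitly, which the paper's chain of equalities implicitly skips.
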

\begin{proof}
Using the polar coordinate \eqref{eq:polar}, we have
\begin{align*}
\int_{\mathbb G} \frac{|f|^p}{|x|^{p(1+\al)}}& dx \\
&= \int_0^\infty r^{Q-p(1+\al) -1} \int_{\mathfrak S} |f(r y)|^p d\si(y) dr\\
&=\frac1{Q-p(1+ \al)} \int_0^\infty (r^{Q-p(1+ \al)})'\int_{\mathfrak S} |f(r y)|^p d\si(y) dr\\
&= -\frac p{Q-p(1+\al)} \Re \int_0^\infty r^{Q-p(1+\al)} \int_{\mathfrak S} |f(ry)|^{p-2} f(ry) \, \overline{\mathcal R f(ry)} d\si(y) dy\\
&=-\frac p{Q-p(1+\al)} \Re \int_{\mathbb G} \frac{|f|^{p-2} f\, \overline{\mathcal R f}}{|x|^{p(1+ \al) -1}} dx\\
&= -\frac p{Q-p(1+\al)}\lt( \Re \int_{\mathbb G} \frac{|f|^{p-2} f}{|x|^{(p-1)(1+\al)}} \frac{\overline{\mathcal R f + \frac{Q-1}{|x|} f}}{|x|^{\al}} - (Q-1)\int_{\mathbb G} \frac{|f|^p}{|x|^{p(1+\al)}} dx\rt).
\end{align*}
This equality implies
\begin{align*}
\int_{\mathbb G} \frac{|f|^p}{|x|^{p(1+\al)}} dx &= \frac{p'}{Q + p'\al}\Re \int_{\mathbb G} \frac{|f|^{p-2} f}{|x|^{(p-1)(1+\al)}} \frac{\overline{\mathcal R f + \frac{Q-1}{|x|} f}}{|x|^{\al}}\\
&= \frac{p-1}p  \int_{\mathbb G} \frac{|f|^p}{|x|^{p(1+\al)}} dx + \frac1p \frac{(p')^p}{|Q+p'\al|^p}\int_{\mathbb G} \frac1{|x|^{p\al}} \lt|\mathcal R f + \frac{Q-1}{|x|} f\rt|^2 dx\\
&\qquad -\int_{\mathbb G} R_p\lt(\frac{f}{|x|^{1+\al}},\frac{p'}{Q + p'\al}\frac{ \mathcal Rf + \frac{Q-1}{|x|} f}{|x|^{\al}}\rt) dx
\end{align*}
which yields our desired result \eqref{eq:abcd}.
\end{proof}

Using Lemma \ref{onetwoLplemma}, we get the following $L^p$ analogue of Theorem \ref{onetwo}.

\begin{theorem}\label{onetwoLp}
Let $\mathbb G$ be a homogeneous group of homogeneous dimension $Q$. Let $|\cdot|$ be any homogeneous quasi-norm on $\mathbb G$. Let $|\cdot|$ be any homogeneous quasi-norm on $\mathbb G$ and $1< p < Q/2$. Then for any complex-valued function $f\in C_0^\infty(\mathbb G\setminus\{0\})$, we have
\begin{equation}\label{eq:identityLpnew}
\int_{\mathbb G} \frac{|\mathcal R_2 f|^p}{|x|^{p\al}} dx = \frac{|Q+p'\al|^p}{(p')^p} \int_{\mathbb G} \frac{|\mathcal R f|^p}{|x|^{p(1+\al)}} dx + p \int_{\mathbb G} \frac1{|x|^{p\al}} R_p\lt(\frac{Q+p'\al}{p'} \frac{\mathcal R f}{|x|}, \mathcal R_2 f\rt) dx
\end{equation}
for any $\al \in \R$. As consequence, we obtain the following weighted $L^p-$Rellich type inequality
\begin{equation}\label{eq:newLpRellich}
\frac{|Q+p'\al|^p}{(p')^p} \int_{\mathbb G} \frac{|\mathcal R f|^p}{|x|^{p(1+\al)}} dx \leq \int_{\mathcal G} \frac{|\mathcal R_2 f|^p}{|x|^{p\al}} dx,
\end{equation}
for any $\al \in \R$ and for any complex-valued function $f\in C_0^\infty(\mathbb G\setminus\{0\})$. Moreover, the inequality \eqref{eq:newLpRellich} is sharp and equality holds if and only if $f =0$.
\end{theorem}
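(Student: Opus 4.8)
The plan is to obtain the identity \eqref{eq:identityLpnew} directly from Proposition \ref{onetwoLplemma} by the substitution $g = \mathcal R f$. The key observation is that the definition of $\mathcal R_2$ can be rewritten as
\[
\mathcal R_2 f = \mathcal R^2 f + \frac{Q-1}{|x|}\mathcal R f = \mathcal R g + \frac{Q-1}{|x|} g,
\]
so that $\mathcal R_2 f$ has exactly the form $\mathcal R g + \frac{Q-1}{|x|} g$ appearing on the left of \eqref{eq:abcd}. Since $f\in C_0^\infty(\mathbb G\setminus\{0\})$, the function $g=\mathcal R f$ is again smooth and compactly supported away from the origin, consistent with the way $\mathcal R$ and $\mathcal R_2$ are applied throughout the paper, so Proposition \ref{onetwoLplemma} applies to $g$ without change. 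Substituting $g$ for $f$ in \eqref{eq:abcd}, and then rewriting $\mathcal R g+\frac{Q-1}{|x|}g$ as $\mathcal R_2 f$ and $g$ as $\mathcal R f$, reproduces \eqref{eq:identityLpnew} verbatim; no further computation is required.

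The inequality \eqref{eq:newLpRellich} follows at once, because the remainder
\[
p\int_{\mathbb G}\frac1{|x|^{p\al}}R_p\lt(\frac{Q+p'\al}{p'}\frac{\mathcal R f}{|x|},\mathcal R_2 f\rt)dx
\]
is nonnegative by the convexity bound $R_p(\xi,\eta)\geq 0$ recorded at the start of this section. For sharpness I would argue exactly as in the proofs of Theorem \ref{onetwo} and Theorem \ref{LpRellich}: one tests the cut-off approximations $f_\ep$ of the homogeneous profile $r^{-(Q-p(2+\al))/p}$, the degree for which $\mathcal R f$ is positively homogeneous of order $-d_{p,\al}$ and the remainder formally vanishes, and checks that both sides of \eqref{eq:newLpRellich} equal $(-\ln\ep)\,\sigma(\mathfrak S)$ times the appropriate constant up to $O(1)$, so that their ratio tends to $|Q+p'\al|^p/(p')^p$. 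These estimates are tedious but entirely routine.

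For the equality case, suppose equality holds in \eqref{eq:newLpRellich}. Then the remainder in \eqref{eq:identityLpnew} vanishes, and since $R_p(\xi,\eta)=0$ forces $\xi=\eta$, we obtain pointwise
\[
\mathcal R_2 f=\frac{Q+p'\al}{p'}\frac{\mathcal R f}{|x|}.
\]
Using $\mathcal R_2 f=\mathcal R(\mathcal R f)+\frac{Q-1}{|x|}\mathcal R f$ together with the elementary simplification $\frac{Q+p'\al}{p'}-(Q-1)=-d_{p,\al}$, this reads $\mathbb E(\mathcal R f)=-d_{p,\al}\,\mathcal R f$, so by Lemma \ref{homo} the function $\mathcal R f$ is positively homogeneous of order $-d_{p,\al}=-(Q-p(1+\al))/p$. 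In polar coordinates $\int_{\mathbb G}|\mathcal R f|^p|x|^{-p(1+\al)}dx$ is then a positive multiple of $\int_0^\infty r^{-1}dr$, which diverges unless $\mathcal R f\equiv 0$; hence $\mathcal R f=0$ and therefore $f=0$. The only real point to watch is the legitimacy of the substitution $g=\mathcal R f$ together with the identity $\frac{Q+p'\al}{p'}-(Q-1)=-d_{p,\al}$; once these are in place, the whole statement collapses onto Proposition \ref{onetwoLplemma} and the nonnegativity of $R_p$.
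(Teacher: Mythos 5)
Your proposal is correct and is essentially the paper's own proof: the paper likewise obtains \eqref{eq:identityLpnew} by substituting $\mathcal R f$ for $f$ in \eqref{eq:abcd}, deduces \eqref{eq:newLpRellich} by dropping the nonnegative $R_p$-term, and settles the equality case exactly as you do, via $R_p(\xi,\eta)=0\Rightarrow\xi=\eta$, the simplification $\frac{Q+p'\al}{p'}-(Q-1)=-d_{p,\al}$, Lemma \ref{homo}, and the $L^p$-integrability of $\mathcal R f/|x|^{1+\al}$. The only (cosmetic) divergence is in the sharpness test: the paper speaks of approximating $r^{-(Q-p(1+\al))/p}$, which is the critical homogeneity of $\mathcal R f$ rather than of $f$, whereas your exponent $-(Q-p(2+\al))/p$ is the correct profile for $f$ itself and does yield the ratio $|Q+p'\al|^p/(p')^p$ in the limit, so your formulation if anything repairs a small slip in the paper's wording.
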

\begin{proof}
The equality \eqref{eq:identityLpnew} is exactly \eqref{eq:abcd} with $f$ being replaced by $\mathcal R f$. The inequality \eqref{eq:newLpRellich} is immediately implies from \eqref{eq:identityLpnew} by dropping the nonnegative remainder term on the right hand side of \eqref{eq:identityLpnew}. The sharpness of \eqref{eq:newLpRellich} is proved by using approximations of the function $r^{-(Q-p(1+\al))/p}$. If equality occurs in \eqref{eq:newLpRellich} by a function $f$, then by \eqref{eq:identityLpnew}, we must have
\[
\mathcal R_2 f = \frac{Q + p'\al}{p'|x|} \mathcal Rf,
\]
which is equivalent to
\[
\mathcal R^2 f + \frac{Q -p(1+\al)}{p |x|} \mathcal R f=0 \quad \Longleftrightarrow \quad \mathbb E(\mR f) = -\frac{Q -p(1+\al)}{p} \mathcal R f.
\]
Hence $\mathcal R f$ is positively homogeneous of degree $-(Q -p(1+\al))/p$, by Lemma \ref{homo}, which forces $\mathcal R f =0$ since $\mathcal Rf /|x|^{1+\al}$ is in $L^p(\mathbb G)$. Thus, we get $f=0$.
\end{proof}

Combining Theorem \ref{onetwoLp} and Theorem \ref{higherorderLp}, we obtain the following weighted $L^p-$Rellich type inequality which is a $L^p$ analogue of Theorem \ref{L2HRnew}.

\begin{theorem}\label{LpHRnew}
Let $G$ be a homogeneous group of homogeneous dimension $Q$. Let $|\cdot|$ be any homogeneous quasi-norm on $\mathbb G$. Let $k,l$ be nonnegative integers such that $k \geq l+1$ and let $p> 1$. Then for any $\al \in \R$ and for any complex-valued function $f\in C_0^\infty(\mathbb G\setminus\{0\})$, we have
\begin{align}\label{eq:oxoxoxeven}
&\int_{\mathbb G} \frac{|\mathcal R_2^k f|^p}{|x|^{p\al}} dx\notag\\
&=\frac{p^p}{|Q -p \al|^p} \lt|\prod_{i=0}^{k-l-1}\frac{(Q-p(2i+\al))(Q+p'(2i+\al))}{pp'}\rt|^p\int_{\mathbb G} \frac{|\mathcal R \mathcal R_2^{l} f|^p}{|x|^{p(2(k-l)-1+\al)}} dx \notag\\
&\quad + p\lt|\prod_{i=0}^{k-l-2}c_{p,2i+ \al}\rt|^p\int_{\mathbb G} \frac{ R_p\lt(\frac{Q+p'(2(k-l-1)+\al)}{p'} \frac{\mathcal R \mathcal R_2^l f}{|x|}, \mathcal R_2^{l+1} f\rt)}{|x|^{p(2(k-l-1)+ \al)}} dx\notag\\
&\quad + p\int_{\mathbb G} \frac{ R_p\lt(-c_{p,\al}\frac{\mathcal R_2^{k-1} f}{|x|^2}, \mathcal R_2^k f\rt)}{|x|^{p\al}} dx \notag\\
&\quad + p \sum_{j=1}^{k-l-2} \lt|\prod_{i=0}^{j-1} c_{p,2i+\al}\rt|^p\int_{\mathbb G} \frac{ R_p\lt(-c_{p,2j+\al}\frac{\mathcal R_2^{k-j-1} f}{|x|^2}, \mathcal R_2^{k-j} f\rt)}{|x|^{p(2j+ \al)}} dx \notag\\
&\quad +p |c_{p,\al}|^{p-2}c_{p,\al}\Bigg[(p-1)\int_{\mathbb G} \frac{|\mathcal R_2^{k-1}f|^{p-2}}{|x|^{p(2+\al) -2}} \lt|\mathcal R|\mathcal R_2^{k-1}f|  + \frac{Q-p(2+\al)}{p|x|} |\mathcal R_2^{k-1}f|\rt|^2 dx\notag\\
&\qquad\qquad\qquad \qquad \qquad  +\int_{\mathbb G} \frac{|\mathcal R_2^{k-1}f|^{p-4} (\Im(\mathcal R_2^{k-1}f \,\overline{\mathcal R \mathcal R_2^{k-1}f}))^2}{|x|^{p(2+\al) -2}} dx\Bigg]\notag\\
&\quad + p \sum_{j=1}^{k-l-2}\lt|\prod_{i=0}^{j-1} c_{p,2i+\al}\rt|^p |c_{p,2j+\al}|^{p-2}c_{p,2j+\al} \Bigg[\int_{\mathbb G} \frac{|\mathcal R_2^{k-j-1}f|^{p-4} (\Im(\mathcal R_2^{k-j-1}f \,\overline{\mathcal R \mathcal R_2^{k-j-1}f}))^2}{|x|^{p(2(j+1)+\al) -2}} dx\notag\\
&\quad+ (p-1)\int_{\mathbb G} \frac{|\mathcal R_2^{k-j-1}f|^{p-2}}{|x|^{p(2(j+1)+\al) -2}} \lt|\mathcal R|\mathcal R_2^{k-j-1}f|  + \frac{Q-p(2(j+1) +\al)}{p|x|} |\mathcal R_2^{k-j-1}f|\rt|^2 dx\Bigg],
\end{align}
and
\begin{align}\label{eq:oxoxoxodd}
&\int_{\mathbb G} \frac{|\mathcal R \mathcal R_2^k f|^p}{|x|^{p\al}} dx\notag\\
&=\lt|\prod_{i=0}^{k-l-1}\frac{(Q-p(2i+1+\al))(Q+p'(2i+1+\al))}{pp'}\rt|^p\int_{\mathbb G} \frac{|\mathcal R \mathcal R_2^{l} f|^p}{|x|^{p(2(k-l)+\al)}} dx \notag\\&\quad + p|d_{p,\al}|^p\lt|\prod_{i=0}^{k-l-2}c_{p,2i+ 1+\al}\rt|^p\int_{\mathbb G} \frac{ R_p\lt(\frac{Q+p'(2(k-l)-1+\al)}{p'} \frac{\mathcal R \mathcal R_2^l f}{|x|}, \mathcal R_2^{l+1} f\rt)}{|x|^{p(2(k-l)-1+ \al)}} dx\notag\\
&\quad + p|d_{p,\al}|^p\int_{\mathbb G} \frac{ R_p\lt(-c_{p,1+\al}\frac{\mathcal R_2^{k-1} f}{|x|^2}, \mathcal R_2^k f\rt)}{|x|^{p(1+\al)}} dx\notag\\
&\quad  + p |d_{p,\al}|^p\sum_{j=1}^{k-l-2} \lt|\prod_{i=0}^{j-1} c_{p,2i+1+\al}\rt|^p\int_{\mathbb G} \frac{ R_p\lt(-c_{p,2j+1+\al}\frac{\mathcal R_2^{k-j-1} f}{|x|^2}, \mathcal R_2^{k-j} f\rt)}{|x|^{p(2j+1+ \al)}} dx \notag\\
&\quad +p |d_{p,\al}|^p |c_{p,1+\al}|^{p-2}c_{p,1+\al}\Bigg[\int_{\mathbb G} \frac{|\mathcal R_2^{k-1}f|^{p-4} (\Im(\mathcal R_2^{k-1}f \,\overline{\mathcal R \mathcal R_2^{k-1}f}))^2}{|x|^{p(3+\al) -2}} dx\notag\\
&\qquad\qquad\qquad \qquad \qquad  +(p-1)\int_{\mathbb G} \frac{|\mathcal R_2^{k-1}f|^{p-2}}{|x|^{p(3+\al) -2}} \lt|\mathcal R|\mathcal R_2^{k-1}f|  + \frac{Q-p(3+\al)}{p|x|} |\mathcal R_2^{k-1}f|\rt|^2 dx\Bigg]\notag\\
&\quad + p |d_{p,\al}|^p\sum_{j=1}^{k-l-2}\lt|\prod_{i=0}^{j-1} c_{p,2i+1+\al}\rt|^p |c_{p,2j+1+\al}|^{p-2}c_{p,2j+1+\al} \times\notag\\
&\quad\times \Bigg[\int_{\mathbb G} \frac{|\mathcal R_2^{k-j-1}f|^{p-4} (\Im(\mathcal R_2^{k-j-1}f \,\overline{\mathcal R \mathcal R_2^{k-j-1}f}))^2}{|x|^{p(2(j+1)+1+\al) -2}} dx\notag\\
&\quad+ (p-1)\int_{\mathbb G} \frac{|\mathcal R_2^{k-j-1}f|^{p-2}}{|x|^{p(2(j+1)+1+\al) -2}} \lt|\mathcal R|\mathcal R_2^{k-j-1}f|  + \frac{Q-p(2(j+1) +\al)}{p|x|} |\mathcal R_2^{k-j-1}f|\rt|^2 dx\Bigg]\notag\\
&\quad p\int_{\mathbb G} \frac1{|x|^{p\al}}R_p\lt(-\frac{Q-p-p\al}p\frac{\mathcal R_2^k f}{|x|},\mathcal R \mathcal R_2^k f \rt) dx.
\end{align}
As a consequence, the following weighted $L^p-$Rellich type inequalities holds for any function $f\in C_0^\infty(\mathbb G \setminus\{0\})$
\begin{equation}\label{eq:LpHRneweven}
\frac{p^p}{|Q -p \al|^p} \lt|\prod_{i=0}^{k-l-1}\frac{(Q-p(2i+\al))(Q+p'(2i+\al))}{pp'}\rt|^p\int_{\mathbb G} \frac{|\mathcal R \mathcal R_2^{l} f|^p}{|x|^{p(2(k-l)-1+\al)}} dx \leq \int_{\mathbb G} \frac{|\mathcal R_2^k f|^p}{|x|^{p\al}} dx,
\end{equation}
for any $\al \in (-Q(p-1)/p,(Q-2p(k-l-1))/2)$ if $k-l\geq 2$ and for any $\al \in \R$ if $k=l+1$, and
\begin{equation}\label{eq:LpHRnewodd}
\lt|\prod_{i=0}^{k-l-1}\frac{(Q-p(2i+1+\al))(Q+p'(2i+1+\al))}{pp'}\rt|^p\int_{\mathbb G} \frac{|\mathcal R \mathcal R_2^{l} f|^p}{|x|^{p(2(k-l)-1+\al)}} dx \leq \int_{\mathbb G} \frac{|\mathcal R \mathcal R_2^k f|^p}{|x|^{p\al}} dx,
\end{equation}
for any $\al\in (-(Q+p')/p',(Q-p(2(k-l-1)+1))/p)$ if $k-l\geq 2$ and for any $\al \in \R$ if $k=l+1$. Moreover, these inequalities \eqref{eq:LpHRneweven} and \eqref{eq:LpHRnewodd} are sharp and equality holds if and only if $f=0$.
\end{theorem}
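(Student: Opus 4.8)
The plan is to prove the two identities \eqref{eq:oxoxoxeven} and \eqref{eq:oxoxoxodd} first, and then read off the inequalities \eqref{eq:LpHRneweven}--\eqref{eq:LpHRnewodd} by discarding nonnegative remainders, exactly mirroring the $L^2$ argument used for Theorem \ref{L2HRnew}. For the even identity I would set $g = \mathcal R_2^{l+1} f$ and apply the iterated Rellich identity \eqref{eq:Lpeven} of Proposition \ref{higherorderLp} to $\mathcal R_2^{k-l-1} g = \mathcal R_2^k f$; this expresses $\int_{\mathbb G} |\mathcal R_2^k f|^p |x|^{-p\al}\,dx$ in terms of the single base term $\int_{\mathbb G} |\mathcal R_2^{l+1} f|^p |x|^{-p(2(k-l-1)+\al)}\,dx$ together with all the displayed $R_p$-type and squared remainder terms. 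The one extra step, compared to a full iteration down to $f$, is that instead of reducing the base term by the Rellich identity once more I would apply the one-to-two identity \eqref{eq:identityLpnew} of Theorem \ref{onetwoLp} with $f$ replaced by $\mathcal R_2^l f$ and $\al$ replaced by $\beta = 2(k-l-1)+\al$. This converts $\int_{\mathbb G} |\mathcal R_2(\mathcal R_2^l f)|^p |x|^{-p\beta}\,dx$ into $\tfrac{|Q+p'\beta|^p}{(p')^p}\int_{\mathbb G} |\mathcal R \mathcal R_2^l f|^p |x|^{-p(1+\beta)}\,dx$ plus one more $R_p$ remainder, and since $1+\beta = 2(k-l)-1+\al$ this is exactly the weight appearing on the right of \eqref{eq:oxoxoxeven}.

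The bookkeeping step is to confirm the closed-form constant, namely the $L^p$ analogue of the explicit constant identity used in the proof of Theorem \ref{L2HRnew}:
\[
\lt(\prod_{i=0}^{k-l-2} c_{p,2i+\al}\rt)^p \frac{|Q+p'(2(k-l-1)+\al)|^p}{(p')^p} = \frac{p^p}{|Q-p\al|^p}\lt|\prod_{i=0}^{k-l-1}\frac{(Q-p(2i+\al))(Q+p'(2i+\al))}{pp'}\rt|^p.
\]
This is a telescoping identity: since $c_{p,2i+\al} = \tfrac{(Q-p(2(i+1)+\al))(Q+p'(2i+\al))}{pp'}$, the product $\prod_{i=0}^{k-l-2} c_{p,2i+\al}$ already supplies every factor $Q-p(2i+\al)$ for $i=1,\dots,k-l-1$ and every factor $Q+p'(2i+\al)$ for $i=0,\dots,k-l-2$; the missing factor $Q-p\al$ is cancelled by $p/|Q-p\al|$, while the one-to-two step contributes precisely the remaining factor $Q+p'(2(k-l-1)+\al)$, and the powers of $pp'$ and $p'$ match after a short count.

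For the odd identity \eqref{eq:oxoxoxodd} I would apply the weighted $L^p$-Hardy identity \eqref{eq:identityLpHardy} of Theorem \ref{LpHardy} to $h = \mathcal R_2^k f$, writing $\int_{\mathbb G} |\mathcal R\mathcal R_2^k f|^p |x|^{-p\al}\,dx = |d_{p,\al}|^p \int_{\mathbb G} |\mathcal R_2^k f|^p |x|^{-p(1+\al)}\,dx$ plus the $R_p$-remainder with argument $(-d_{p,\al}\,\mathcal R_2^k f/|x|,\ \mathcal R\mathcal R_2^k f)$, and then substituting the even identity \eqref{eq:oxoxoxeven} with $\al$ replaced by $1+\al$ into the first term. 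Here the outer constant collapses automatically, because $|d_{p,\al}|^p\, p^p/|Q-p(1+\al)|^p = 1$ by the very definition $d_{p,\al} = (Q-p(1+\al))/p$, leaving exactly the product displayed in \eqref{eq:oxoxoxodd}.

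Finally, the inequalities \eqref{eq:LpHRneweven} and \eqref{eq:LpHRnewodd} follow from the identities by dropping all remainder terms, which are manifestly nonnegative: each $R_p(\cdot,\cdot)\geq 0$ by convexity of $z\mapsto|z|^p$, and the squared terms $(p-1)|\cdots|^2$ and $(\Im(\cdots))^2$ are nonnegative once their prefactors $c_{p,2i+\al}$ (resp.\ $c_{p,2i+1+\al}$) are nonnegative, which holds precisely on the stated $\al$-ranges. Sharpness is checked as in the earlier theorems by testing the approximating family built from $r^{-(Q-p(2k+\al))/p}$ (resp.\ $r^{-(Q-p(2k+1+\al))/p}$), which keeps every remainder $O(1)$ while both sides diverge like $-\ln\ep$. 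For the equality case, vanishing of the lowest squared remainder forces $\mathcal R|\mathcal R_2^{m} f| + \tfrac{Q-p(\cdots)}{p|x|}|\mathcal R_2^{m} f| = 0$ for the appropriate $m$, so by Lemma \ref{homo} the function $|\mathcal R_2^{m} f|$ is positively homogeneous of a negative degree, and the $L^p$-integrability of the corresponding weighted norm then forces $\mathcal R_2^m f\equiv 0$ and hence $f\equiv0$. I expect the constant bookkeeping, together with the correct alignment of weight exponents at each iteration (every Rellich step shifts the $|x|$-power by $2$), to be the only genuinely delicate part; the identities themselves are routine iterations of results already established.
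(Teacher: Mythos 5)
Your proposal is correct and follows essentially the same route as the paper, which gives no separate proof for this theorem but obtains it exactly as you do: apply the iterated identity \eqref{eq:Lpeven} of Proposition \ref{higherorderLp} to $\mathcal R_2^{k-l-1}(\mathcal R_2^{l+1}f)$, finish the last step with Theorem \ref{onetwoLp} at the shifted weight $2(k-l-1)+\al$, and derive the odd case by prepending the Hardy identity \eqref{eq:identityLpHardy} applied to $\mathcal R_2^k f$, mirroring the proof of Theorem \ref{L2HRnew}. Your telescoping verification of the constant (including the cancellation $|d_{p,\al}|^p\,p^p/|Q-p(1+\al)|^p=1$ in the odd case) and the sharpness/equality arguments via the approximations of $r^{-(Q-p(2k+\al))/p}$ and Lemma \ref{homo} match the paper's scheme for the preceding theorems.
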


\section{The critical Rellich type inequalities}
This section is devoted to study the critical Rellich inequalities. Let us start by recalling a family of logarithmic Hardy inequalities in \cite{RS2016}.

\begin{proposition}\label{RScritical}
Let $\mathbb G$ be a homogeneous group of homogeneous dimension $Q\geq 2$ and a homogeneous quasi-norm denoted by $|\cdot|$. Let $f \in C_0^\infty(\mathbb G\setminus\{0\})$ be any complex-valued function and denote $f_R(x) = f(R x/|x|)$ with $x\in \mathbb G$ and $R >0$. Then we have
\begin{align}\label{eq:RSidentitycritical}
\int_{\mathbb G} \frac{|\mathcal R f|^p}{|x|^{Q-p}} dx &= \lt(\frac{p-1}p\rt)^p \int_{\mathbb G} \frac{|f -f_R|^p}{|x|^Q \lt|\ln \frac R{|x|}\rt|^p} dx \notag\\
&\qquad + p \int_{\mathbb G} \frac{1}{|x|^{Q-p}}R_p\lt(-\frac{p-1}p \frac {f-f_R}{|x| \ln \frac R{|x|}}, \mathcal R f\rt) dx
\end{align}
for any $1 < p < \infty$ and any $R >0$. As a consequence, we obtain the following critical Hardy inequality on $\mathbb G$,
\begin{equation}\label{eq:criticalHR}
\lt(\frac{p-1}p\rt)^p \sup_{R >0} \int_{\mathbb G} \frac{|f -f_R|^p}{|x|^Q \lt|\ln \frac R{|x|}\rt|^p} dx \leq \int_{\mathbb G} \frac{|\mathcal R f|^p}{|x|^{Q-p}} dx,\qquad 1 < p < \infty,
\end{equation}
any complex-valued function $f \in C_0^\infty(\mathbb G\setminus\{0\})$. Moreover, the constant $((p-1)/p)^p$ is sharp.
\end{proposition}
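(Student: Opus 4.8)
The plan is to follow the scheme of Theorem~\ref{LpHardy}: evaluate the logarithmically weighted integral on the right of \eqref{eq:RSidentitycritical} in polar coordinates, integrate by parts once against the radial derivative, and then read off the cross term through the functional $R_p$. Write $g=f-f_R$. Two preliminary facts drive the argument. First, $f_R$ is positively homogeneous of degree $0$, since $f_R(rx)=f(Rrx/|rx|)=f(Rx/|x|)=f_R(x)$; hence $\frac{d}{dr}f_R(rx)=0$, so $\mathcal{R}f_R\equiv 0$ and $\mathcal{R}g=\mathcal{R}f$. Second, $g$ vanishes on the sphere $|x|=R$, because $f_R(Ry)=f(Ry)$ for $y\in\mathfrak{S}$; this is exactly what controls the singularity of the logarithmic weight at $|x|=R$.

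First I would use \eqref{eq:polar} to write
\[
J:=\int_{\mathbb{G}}\frac{|g|^p}{|x|^{Q}\left|\ln\frac{R}{|x|}\right|^p}\,dx=\int_0^\infty\int_{\mathfrak{S}}\frac{|g(ry)|^p}{r\left|\ln\frac{R}{r}\right|^p}\,d\sigma(y)\,dr,
\]
and then exploit the elementary identity
\[
\frac{1}{r\left|\ln\frac{R}{r}\right|^p}=\frac{1}{p-1}\frac{d}{dr}\left[\frac{\ln\frac{R}{r}}{\left|\ln\frac{R}{r}\right|^p}\right].
\]
Integrating by parts in $r$, using $\frac{\partial}{\partial r}|g(ry)|^p=p|g(ry)|^{p-2}\Re\big(g(ry)\overline{\mathcal{R}f(ry)}\big)$ and returning to $\mathbb{G}$, this gives
\[
J=-\frac{p}{p-1}\,\Re\int_{\mathbb{G}}\frac{\ln\frac{R}{|x|}}{|x|^{Q-1}\left|\ln\frac{R}{|x|}\right|^p}\,|g|^{p-2}g\,\overline{\mathcal{R}f}\,dx.
\]

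To close the identity I would set $\xi=-\frac{p-1}{p}\frac{g}{|x|\ln\frac{R}{|x|}}$ and $\eta=\mathcal{R}f$, so that $|\xi|^{p-2}\xi=-\left(\frac{p-1}{p}\right)^{p-1}|x|^{-(p-1)}|g|^{p-2}g\,\frac{\ln\frac{R}{|x|}}{|\ln\frac{R}{|x|}|^p}$; the integrand above is then a constant multiple of $\Re(|\xi|^{p-2}\xi\,\bar\eta)/|x|^{Q-p}$, and substituting $\Re(|\xi|^{p-2}\xi\,\bar\eta)=\frac1p|\eta|^p+\frac{p-1}{p}|\xi|^p-R_p(\xi,\eta)$ expresses $J$ in terms of $\int_{\mathbb{G}}|x|^{-(Q-p)}|\mathcal{R}f|^p\,dx$, of $J$ itself (since $\int_{\mathbb{G}}|x|^{-(Q-p)}|\xi|^p\,dx=(\frac{p-1}{p})^pJ$), and of the remainder $\int_{\mathbb{G}}|x|^{-(Q-p)}R_p(\xi,\eta)\,dx$. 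Collecting the two occurrences of $J$ and solving for $\int_{\mathbb{G}}|x|^{-(Q-p)}|\mathcal{R}f|^p\,dx$ yields exactly \eqref{eq:RSidentitycritical}. The consequence \eqref{eq:criticalHR} is then immediate: since $R_p\ge 0$, dropping the remainder gives $(\frac{p-1}{p})^pJ\le\int_{\mathbb{G}}|x|^{-(Q-p)}|\mathcal{R}f|^p\,dx$ for every $R>0$, and one takes the supremum over $R$. For sharpness, equality would force $R_p(\xi,\eta)\equiv 0$, i.e. $\mathcal{R}f=-\frac{p-1}{p}\frac{f-f_R}{|x|\ln\frac{R}{|x|}}$, whose radial profile is $|\ln\frac{R}{|x|}|^{(p-1)/p}$; testing \eqref{eq:RSidentitycritical} against truncated approximations of this profile concentrating at the singular scale makes the remainder negligible compared with the main term and shows the constant $((p-1)/p)^p$ cannot be improved.

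The step I expect to be the main obstacle is the justification of the integration by parts near $|x|=R$. There the logarithmic weight is singular and changes sign, so the integration by parts must be performed separately on $\{|x|<R\}$ and $\{|x|>R\}$, and one must check that the interior boundary contributions at $r=R^{\pm}$ vanish and that $J$ is finite at all. Both follow from $g(Ry)=0$ and the smoothness of $f$: near $r=R$ one has $|g(ry)|^p=O(|r-R|^p)$ while the factor $\ln\frac{R}{r}/|\ln\frac{R}{r}|^p$ blows up only like $|r-R|^{-(p-1)}$, so the product tends to $0$; the boundary terms at $r\to 0$ and $r\to\infty$ vanish because $g(ry)$ stays bounded while $|\ln\frac{R}{r}|^{1-p}\to 0$. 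Apart from this point, the argument is the same formal computation as in Theorem~\ref{LpHardy}.
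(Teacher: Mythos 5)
Your proof is correct, and it follows exactly the method this paper uses for its analogous identities (Theorem \ref{LpHardy} and Theorem \ref{criticalRellich}): polar decomposition \eqref{eq:polar}, one integration by parts in $r$ against the antiderivative $\frac{1}{p-1}\frac{d}{dr}\bigl[\ln\frac{R}{r}\,|\ln\frac{R}{r}|^{-p}\bigr]$, and reassembly via the convexity functional $R_p$ --- the paper itself states Proposition \ref{RScritical} without proof, citing \cite{RS2016}, where the argument is the same. Your treatment of the delicate point (splitting the integration by parts at $|x|=R$ and using $g(Ry)=0$ to kill the interior boundary terms and to make $J$ finite) is exactly the needed justification, and your sharpness sketch via truncations of the extremal profile $(\ln\frac{R}{|x|})^{1-\frac1p}$ matches the test functions $f_\delta$ used in the paper's proof of Theorem \ref{criticalRellich}.
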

Our main aim of this section is to extend the critical Hardy inequality \eqref{eq:criticalHR} to the higher order derivatives, i.e., to establish the critical Rellich type inequalities on $\mathbb G$. To do this, we first prove a critical Rellich inequality for $\mathcal R_2$ as follows.
\begin{theorem}\label{criticalRellich}
Let $\mathbb G$ be a homogeneous group of homogeneous dimension $Q\geq 3$ and a homogeneous quasi-norm denoted by $|\cdot|$. Let $f \in C_0^\infty(\mathbb G\setminus\{0\})$ be any complex-valued function and denote $f_R(x) = f(R x/|x|)$ with $x\in \mathbb G$ and $R >0$. Then we have
\begin{align}\label{eq:identitycriticalR}
\int_{\mathbb G} \frac{|\mathcal R_2 f|^p}{|x|^{Q-2p}} dx &= \lt(\frac{Q-2}{p'}\rt)^p \int_{\mathbb G} \frac{|f -f_R|^p}{|x|^Q \lt|\ln \frac R{|x|}\rt|^p} dx + p\int_{\mathbb G}\frac{1}{|x|^{Q-2p}} R_p\lt((Q-2)\frac{\mathcal Rf}{|x|}, \mathcal R_2f\rt) dx\notag\\
&\qquad + p(Q-2)^p \int_{\mathbb G} \frac{1}{|x|^{Q-p}}R_p\lt(-\frac{p-1}p \frac {f-f_R}{|x| \ln \frac R{|x|}}, \mathcal R f\rt) dx,
\end{align}
for any $1 < p < \infty$ and any $R >0$. As a consequence, we obtain the following critical Rellich inequality on $\mathbb G$,
\begin{equation}\label{eq:criticalRellich}
\lt(\frac{Q-2}{p'}\rt)^p \sup_{R >0} \int_{\mathbb G} \frac{|f -f_R|^p}{|x|^Q \lt|\ln \frac R{|x|}\rt|^p} dx \leq \int_{\mathbb G} \frac{|\mathcal R_2 f|^p}{|x|^{Q-2p}} dx,\qquad 1 < p < \infty,
\end{equation}
any complex-valued function $f \in C_0^\infty(\mathbb G\setminus\{0\})$. Moreover, the constant $((Q-2)/p')^p$ is sharp.
\end{theorem}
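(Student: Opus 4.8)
The plan is to derive the identity \eqref{eq:identitycriticalR} by chaining two equalities already established in the paper, evaluated at the weight exponent forced by the critical scaling: the $\mathcal R_2$-to-$\mathcal R$ identity \eqref{eq:identityLpnew} of Theorem \ref{onetwoLp} and the critical Hardy identity \eqref{eq:RSidentitycritical} of Proposition \ref{RScritical}. First I would apply \eqref{eq:identityLpnew} (equivalently \eqref{eq:abcd} with $f$ replaced by $\mathcal R f$) with the choice $\al=(Q-2p)/p$. This choice is purely algebraic: it gives $p\al=Q-2p$ (so the left weight is $|x|^{-(Q-2p)}$, as required), $p(1+\al)=Q-p$, and
\[
Q+p'\al = Q+\frac{Q-2p}{p-1} = \frac{p(Q-2)}{p-1} = p'(Q-2),
\]
whence $\tfrac{Q+p'\al}{p'}=Q-2$ and $\tfrac{|Q+p'\al|^p}{(p')^p}=(Q-2)^p$ since $Q\ge 3>2$. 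Although Theorem \ref{onetwoLp} is stated for $1<p<Q/2$, the underlying identity \eqref{eq:abcd} is a pure integration-by-parts statement that remains valid for every $p>1$ here, the only non-degeneracy needed being $Q-p(1+\al)=p\ne 0$ and $Q+p'\al\ne 0$. This step produces
\[
\int_{\mathbb G}\frac{|\mathcal R_2 f|^p}{|x|^{Q-2p}}\,dx = (Q-2)^p\int_{\mathbb G}\frac{|\mathcal R f|^p}{|x|^{Q-p}}\,dx + p\int_{\mathbb G}\frac1{|x|^{Q-2p}}R_p\!\lt((Q-2)\frac{\mathcal R f}{|x|},\mathcal R_2 f\rt)dx,
\]
which already supplies the second term on the right of \eqref{eq:identitycriticalR}.

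Second, I would feed the surviving term $\int_{\mathbb G}|\mathcal R f|^p|x|^{-(Q-p)}\,dx$ into the critical Hardy identity \eqref{eq:RSidentitycritical} and multiply through by $(Q-2)^p$. Using $(Q-2)^p\big(\tfrac{p-1}{p}\big)^p=\big(\tfrac{Q-2}{p'}\big)^p$ (since $1/p'=(p-1)/p$), this produces exactly the first term $\big(\tfrac{Q-2}{p'}\big)^p\int_{\mathbb G}|f-f_R|^p|x|^{-Q}|\ln(R/|x|)|^{-p}\,dx$ together with the third term $p(Q-2)^p\int_{\mathbb G}|x|^{-(Q-p)}R_p(\cdots)\,dx$ of \eqref{eq:identitycriticalR}. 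Adding the two displays yields \eqref{eq:identitycriticalR} verbatim. The inequality \eqref{eq:criticalRellich} is then immediate: both $R_p$-remainders are nonnegative by the convexity of $z\mapsto|z|^p$, so dropping them and taking the supremum over $R>0$ leaves precisely \eqref{eq:criticalRellich}.

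It remains to show that the constant $\big((Q-2)/p'\big)^p$ is sharp. Here I would argue as in the sharpness part of the critical Hardy inequality: test \eqref{eq:criticalRellich} against a family of radial functions approximating the logarithmic profile $\sim|\ln(R/|x|)|^{1/p'}$ that saturates \eqref{eq:RSidentitycritical}, suitably truncated and regularised, and let the regularisation parameter tend to its critical value so that the main integral diverges logarithmically. For such a family the critical Hardy remainder $R_p\big(-\tfrac{p-1}{p}\tfrac{f-f_R}{|x|\ln(R/|x|)},\mathcal R f\big)$ becomes negligible relative to the main logarithmic integral, by the sharpness already known for \eqref{eq:RSidentitycritical}. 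The genuinely new point is to check that along the same family the first remainder $R_p\big((Q-2)\tfrac{\mathcal R f}{|x|},\mathcal R_2 f\big)$, integrated against $|x|^{-(Q-2p)}$, is also of lower order. I expect this last verification to be the main obstacle, since it requires estimating the second-order quantity $\mathcal R_2 f$ against the first-order near-extremizer rather than simply inheriting the one-step Hardy estimate; once it is established, the ratio of the two sides of \eqref{eq:criticalRellich} tends to $\big((Q-2)/p'\big)^p$, proving sharpness.
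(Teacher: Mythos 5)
Your derivation of the identity \eqref{eq:identitycriticalR} is exactly the paper's: apply the $\mathcal R_2$-to-$\mathcal R$ identity with $\al=(Q-2p)/p$ and then substitute the critical Hardy identity \eqref{eq:RSidentitycritical}, with the same constant bookkeeping $Q+p'\al=p'(Q-2)$ and $(Q-2)^p\bigl(\tfrac{p-1}{p}\bigr)^p=\bigl(\tfrac{Q-2}{p'}\bigr)^p$. Your remark that one must step back from the stated hypothesis $1<p<Q/2$ of Theorem \ref{onetwoLp} to the pure integration-by-parts identity \eqref{eq:abcd}, whose only non-degeneracy requirement at this exponent is $Q-p(1+\al)=p\neq 0$, is correct and in fact more careful than the paper itself, which invokes Theorem \ref{onetwoLp} without comment even though the critical theorem is asserted for all $1<p<\infty$.

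The one genuine shortfall is the sharpness part, which you leave as a conditional sketch. The ``main obstacle'' you anticipate --- showing the second-order remainder $R_p\bigl((Q-2)\tfrac{\mathcal Rf}{|x|},\mathcal R_2 f\bigr)$ is negligible along the near-extremizing family --- never actually arises in the paper's proof, because the paper does not estimate the remainders at all: it computes both sides of \eqref{eq:criticalRellich} directly on the explicit family $f_\de(x)=\bigl(\ln\tfrac R{|x|}\bigr)^{1-\frac1p-\de}\phi(|x|)$, with $\phi$ a fixed cutoff and $R>2$. Direct differentiation gives
\[
\mathcal R_2 f_\de(r)=-(Q-2)\Bigl(1-\tfrac1p-\de\Bigr)\frac1{r^2}\Bigl(\ln\tfrac Rr\Bigr)^{-\frac1p-\de}\phi(r)+E_\de(r),
\]
where every term of $E_\de$ (those involving $\phi'$, $\mathcal R_2\phi$, or the extra power $(\ln(R/r))^{-1-\frac1p-\de}$) has weighted $L^p$ norm $O(1)$ uniformly as $\de\to0$, while the main integral $\int_{\G}|f_\de-(f_\de)_R|^p\,|x|^{-Q}\,|\ln(R/|x|)|^{-p}\,dx$ diverges like $(\de p)^{-1}$; the surviving leading term reproduces exactly $\bigl((Q-2)(1-\tfrac1p)\bigr)^p=\bigl(\tfrac{Q-2}{p'}\bigr)^p$ times the divergent integral, so the ratio of the two sides converges to the claimed constant with no remainder analysis whatsoever. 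Your test family (exponent tending to $1/p'$) is the right one, so the plan is sound, but to complete the proof you should replace the remainder-negligibility argument by this direct term-by-term computation; as written, the decisive verification is flagged but not performed.
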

\begin{proof}
It follows from Theorem \ref{onetwoLp} for $\al = (Q-2p)/p$ that
\[
\int_{\mathbb G} \frac{|\mathcal R_2 f|^p}{|x|^{Q-2p}} dx = (Q-2)^p \int_{\mathbb G} \frac{|\mathcal R f|^p}{|x|^{Q-p}} dx + p\int_{\mathbb G}\frac{1}{|x|^{Q-2p}} R_p\lt((Q-2)\frac{\mathcal Rf}{|x|}, \mathcal R_2f\rt) dx 
\]
Plugging \eqref{eq:RSidentitycritical} into the previous equality, we obtain \eqref{eq:identitycriticalR}. The inequality \eqref{eq:criticalRellich} is an immediate consequence of \eqref{eq:identitycriticalR}. It remains to check the sharpness of \eqref{eq:criticalRellich}. For $\ep, \de >0$ small enough and $R >2$, define the function
\[
f_{\de}(x) = \lt(\ln \frac R{|x|}\rt)^{1-\frac1p -\de} \phi(|x|),
\]
where $\phi$ is the function as in the proof of Theorem \ref{identityRellich}. By the direct computations, we have
\begin{align*}
\mathcal R f_{\de}(r)& = -\lt(1-\frac1p -\de\rt)\frac1r \lt(\ln \frac Rr\rt)^{-\frac1p -\de} \phi(r) + \lt(\ln \frac Rr\rt)^{1-\frac1p -\de} \phi'(r) 
\end{align*}
and
\begin{align*}
&\mathcal R^2 f_{\de}(r)\\
&= -2\lt(1-\frac1p -\de\rt)\frac1r \lt(\ln \frac Rr\rt)^{-\frac1p -\de} \phi'(r) -\lt(1-\frac1p -\de\rt)\lt(\frac1p + \de\rt)\frac1 {r^2} \lt(\ln \frac Rr\rt)^{-1-\frac1p -\de} \phi(r)\\
&\quad + \lt(1-\frac1p -\de\rt)\frac1{r^2} \lt(\ln \frac Rr\rt)^{-\frac1p -\de} \phi(r) +  \lt(\ln \frac Rr\rt)^{1-\frac1p -\de} \phi''(r)
\end{align*} 
Thus, we obtain
\begin{align*}
&\mathcal R_2f_{\de}(r) \\
&=-2\lt(1-\frac1p -\de\rt)\frac1r \lt(\ln \frac Rr\rt)^{-\frac1p -\de} \phi'(r) -(Q-2)\lt(1-\frac1p -\de\rt)\frac1{r^2} \lt(\ln \frac Rr\rt)^{-\frac1p -\de} \phi(r)\\
&\quad -\lt(1-\frac1p -\de\rt)\lt(\frac1p + \de\rt)\frac1 {r^2} \lt(\ln \frac Rr\rt)^{-1-\frac1p -\de} \phi(r) +  \lt(\ln \frac Rr\rt)^{1-\frac1p -\de} \mathcal R_2\phi(r).
\end{align*}
Evidently, $(f_{\de})_R =0$ and
\begin{align*}
\int_{\mathbb G} \frac{|f_{\de}-(f_{\de})_R|^p}{|x|^Q \lt|\ln \frac R {|x|}\rt|^p} dx &= \sigma(\mathfrak S) \int_0^2\frac{1}r \lt(\ln R -\ln r\rt)^{-1-\de p } \phi(r)^p dr\\
&\geq \sigma(\mathfrak S)\int_{0}^1 \frac{1}r \lt(\ln R -\ln r\rt)^{-1-\de p } dr\\
&= \frac1{\de p} (\ln R)^{-p\de} |\mathfrak S|.
\end{align*}
Hence 
\[
\lim_{\de \to 0} \int_{\mathbb G} \frac{|f_{\de}-(f_{\de})_R|^p}{|x|^Q \lt|\ln \frac R {|x|}\rt|^p} dx =\infty.
\]
It is easy to check that
\[
\int_{\mathbb G} \frac{1}{|x|^{Q-2p}}\lt|\frac1{|x|} \lt(\ln \frac R{|x|}\rt)^{-\frac1p -\de} \phi'(|x|)\rt|^p dx =O(1),
\]
\[
\int_{\mathbb G} \frac{1}{|x|^{Q-2p}}\lt|\frac1 {|x|^2} \lt(\ln \frac R{|x|}\rt)^{-1-\frac1p -\de} \phi(|x|)\rt|^p dx =O(1),
\]
\[
\int_{\mathbb G} \frac{1}{|x|^{Q-2p}}\lt|\lt(\ln \frac R{|x|}\rt)^{1-\frac1p -\de} \mathcal R_2\phi(|x|)\rt|^p dx =O(1),
\]
and
\begin{align*}
\int_{\mathbb G} \frac{1}{|x|^{Q-2p}}\lt|\frac1{|x|^2} \lt(\ln \frac R{|x|}\rt)^{-\frac1p -\de} \phi(|x|)\rt|^p dx&=\sigma(\mathfrak S)\int_0^2 \frac1 r\lt(\ln R -\ln r\rt)^{-1-p\de} \phi(r) dr\\
&= \int_{\mathbb G} \frac{|f_{\de}-(f_{\de})_R|^p}{|x|^Q \lt|\ln \frac R {|x|}\rt|^p} dx.
\end{align*}
Consequently, we get
\[
\lim_{\de \to 0} \frac{\int_{\mathbb G} \frac{|\mathcal R_2 f_\de|^p}{|x|^{Q-2p}}}{\int_{\mathbb G} \frac{|f_{\de}-(f_{\de})_R|^p}{|x|^Q \lt|\ln \frac R {|x|}\rt|^p} dx} = \lt(\frac{Q-2}{p'}\rt)^p.
\]
This proves the sharpness of \eqref{eq:criticalRellich}. 
\end{proof}
We next combine Theorem \ref{criticalRellich} together with Theorem \ref{LpHardy} and Theorem \ref{higherorderLp} to establish the critical Hardy--Rellich type inequalities of higher orders. Denote 
\[
a_{j,Q} = 2j(Q-2j-2).
\]
Then the following equalities holds true.
\begin{proposition}\label{criticalHRhigher}
Let $\mathbb G$ be a homogeneous group of homogeneous dimension $Q$ and a homogeneous quasi-norm denoted by $|\cdot|$. Let $f \in C_0^\infty(\mathbb G\setminus\{0\})$ be any complex-valued function and denote $f_R(x) = f(R x/|x|)$ with $x\in \mathbb G$ and $R >0$. Then we have for any $R>0$,
\begin{align}\label{eq:criticaleven}
&\int_{\mathbb G} \frac{|\mathcal R_2^k f|^p}{|x|^{Q-2kp}} dx \notag\\
&= \lt(\frac{Q-2}{p'}\rt)^p \lt(\prod_{i=1}^{k-1} a_{i,Q}\rt)^p \int_{\mathbb G} \frac{|f-f_R|^p}{|x|^Q \lt|\ln \frac R{|x|}\rt|^p} dx \notag\\
&\quad + p\lt(\prod_{i=1}^{k-1} a_{i,Q}\rt)^p\int_{\mathbb G}\frac{1}{|x|^{Q-2p}} R_p\lt((Q-2)\frac{\mathcal Rf}{|x|}, \mathcal R_2f\rt) dx\notag\\
&\quad + p(Q-2)^p \lt(\prod_{i=1}^{k-1} a_{i,Q}\rt)^p\int_{\mathbb G} \frac{1}{|x|^{Q-p}}R_p\lt(-\frac{p-1}p \frac {f-f_R}{|x| \ln \frac R{|x|}}, \mathcal R f\rt) dx\notag\\
&\quad + p\int_{\mathbb G} \frac1{|x|^{Q-2kp}} R_p\lt(-a_{k-1,Q}\frac{\mathcal R_2^{k-1} f}{|x|^2}, \mathcal R_2^k f\rt) dx \notag\\
&\quad + p \sum_{j=1}^{k-2} \lt(\prod_{i=k-j}^{k-1} a_{i,Q}\rt)^p\int_{\mathbb G} \frac1{|x|^{Q-2(k-j)p}} R_p\lt(-a_{k-j-1,Q}\frac{\mathcal R_2^{k-j-1} f}{|x|^2}, \mathcal R_2^{k-j} f\rt) dx \notag\\
&\quad +p a_{k-1,Q}^{p-1}\Bigg[(p-1)\int_{\mathbb G} \frac{|\mathcal R_2^{k-1}f|^{p-2}}{|x|^{Q-2(k-1)p -2}} \lt|\mathcal R|\mathcal R_2^{k-1}f|  + \frac{2(k-1)}{|x|} |\mathcal R_2^{k-1}f|\rt|^2 dx\notag\\
&\qquad\qquad\qquad \qquad \qquad  +\int_{\mathbb G} \frac{|\mathcal R_2^{k-1}f|^{p-4} (\Im(\mathcal R_2^{k-1}f \,\overline{\mathcal R \mathcal R_2^{k-1}f}))^2}{|x|^{Q-2(k-1)p -2}} dx\Bigg]\notag\\
&\quad + p \sum_{j=1}^{l-1}\lt(\prod_{i=k-j}^{k-1} a_{k-i-1,Q}\rt)^p a_{k-j-1,Q}^{p-1} \Bigg[\int_{\mathbb G} \frac{|\mathcal R_2^{k-j-1}f|^{p-4} (\Im(\mathcal R_2^{k-j-1}f \,\overline{\mathcal R \mathcal R_2^{k-j-1}f}))^2}{|x|^{Q-2(k-j-1)p -2}} dx\notag\\
&\quad+ (p-1)\int_{\mathbb G} \frac{|\mathcal R_2^{k-j-1}f|^{p-2}}{|x|^{Q-2(k-j-1)p -2}} \lt|\mathcal R|\mathcal R_2^{k-j-1}f|  + \frac{Q-p(2(j+1) +\al)}{p|x|} |\mathcal R_2^{k-j-1}f|\rt|^2 dx\Bigg]
\end{align}
if $2\leq k < Q/2$, and 
\begin{align}\label{eq:criticalodd}
&\int_{\mathbb G} \frac{|\mathcal R \mathcal R_2^k f|^p}{|x|^{Q-(2k+1)p}} dx \notag\\
&= (2k)^p\lt(\frac{Q-2}{p'}\rt)^p \lt(\prod_{i=1}^{k-1} a_{i,Q}\rt)^p \int_{\mathbb G} \frac{|f-f_R|^p}{|x|^Q \lt|\ln \frac R{|x|}\rt|^p} dx \notag\\
&\quad + p(2k)^p\lt(\prod_{i=1}^{k-1} a_{i,Q}\rt)^p\int_{\mathbb G}\frac{1}{|x|^{Q-2p}} R_p\lt((Q-2)\frac{\mathcal Rf}{|x|}, \mathcal R_2f\rt) dx\notag\\
&\quad + p(2k)^p(Q-2)^p \lt(\prod_{i=1}^{k-1} a_{i,Q}\rt)^p\int_{\mathbb G} \frac{1}{|x|^{Q-p}}R_p\lt(-\frac{p-1}p \frac {f-f_R}{|x| \ln \frac R{|x|}}, \mathcal R f\rt) dx\notag\\
&\quad + p(2k)^p\int_{\mathbb G} \frac1{|x|^{Q-2kp}} R_p\lt(-a_{k-1,Q}\frac{\mathcal R_2^{k-1} f}{|x|^2}, \mathcal R_2^k f\rt) dx \notag\\
&\quad + p (2k)^p\sum_{j=1}^{k-2} \lt(\prod_{i=k-j}^{k-1} a_{i,Q}\rt)^p\int_{\mathbb G} \frac1{|x|^{Q-2(k-j)p}} R_p\lt(-a_{k-j-1,Q}\frac{\mathcal R_2^{k-j-1} f}{|x|^2}, \mathcal R_2^{k-j} f\rt) dx \notag\\
&\quad +p(2k)^p a_{k-1,Q}^{p-1}\Bigg[(p-1)\int_{\mathbb G} \frac{|\mathcal R_2^{k-1}f|^{p-2}}{|x|^{Q-2(k-1)p -2}} \lt|\mathcal R|\mathcal R_2^{k-1}f|  + \frac{2(k-1)}{|x|} |\mathcal R_2^{k-1}f|\rt|^2 dx\notag\\
&\qquad\qquad\qquad \qquad \qquad  +\int_{\mathbb G} \frac{|\mathcal R_2^{k-1}f|^{p-4} (\Im(\mathcal R_2^{k-1}f \,\overline{\mathcal R \mathcal R_2^{k-1}f}))^2}{|x|^{Q-2(k-1)p -2}} dx\Bigg]\notag\\
&\quad + p(2k)^p \sum_{j=1}^{k-2}\lt(\prod_{i=k-j}^{k-1} a_{k-i-1,Q}\rt)^p a_{k-j-1,Q}^{p-1} \Bigg[\int_{\mathbb G} \frac{|\mathcal R_2^{k-j-1}f|^{p-4} (\Im(\mathcal R_2^{k-j-1}f \,\overline{\mathcal R \mathcal R_2^{k-j-1}f}))^2}{|x|^{Q-2(k-j-1)p -2}} dx\notag\\
&\quad+ (p-1)\int_{\mathbb G} \frac{|\mathcal R_2^{k-j-1}f|^{p-2}}{|x|^{Q-2(k-j-1)p -2}} \lt|\mathcal R|\mathcal R_2^{k-j-1}f|  + \frac{Q-p(2(j+1) +\al)}{p|x|} |\mathcal R_2^{k-j-1}f|\rt|^2 dx\Bigg]\notag\\
&\quad + p\int_{\mathbb G} \frac1{|x|^{Q-(2k+1)p}}R_p\lt(-2k\frac{\mathcal R_2^k f}{|x|},\mathcal R \mathcal R_2^k f \rt) dx,
\end{align}
if $1\leq k < (Q-1)/2$.
\end{proposition}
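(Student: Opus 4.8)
The plan is to prove \eqref{eq:criticaleven} by iterating the weighted $L^p$-Rellich identity \eqref{eq:identityLpRellich} down to the single operator $\mathcal{R}_2$ and then closing the computation with the critical identity \eqref{eq:identitycriticalR}, rather than reducing all the way down to $f$ as in Proposition \ref{higherorderLp}. Concretely, I would apply \eqref{eq:identityLpRellich} with $f$ replaced by $\mathcal{R}_2^{k-1}f$ and $\al = Q/p - 2k$; this expresses $\int_{\mathbb{G}} |\mathcal{R}_2^k f|^p |x|^{-(Q-2kp)}\,dx$ through $\int_{\mathbb{G}} |\mathcal{R}_2^{k-1}f|^p |x|^{-(Q-2(k-1)p)}\,dx$ together with the three nonnegative remainder families of \eqref{eq:identityLpRellich}. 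Repeating this $k-1$ times lowers the power of $\mathcal{R}_2$ by one and raises the weight exponent by $2p$ at each stage, so after $k-1$ steps one lands exactly on $\int_{\mathbb{G}} |\mathcal{R}_2 f|^p |x|^{-(Q-2p)}\,dx$, which is the left-hand side of \eqref{eq:identitycriticalR}. Substituting \eqref{eq:identitycriticalR} at this final stage produces the logarithmic leading term together with its two critical remainders.

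The arithmetic heart of the argument is the evaluation of the Rellich constant at these critical weights. At the stage reducing $\mathcal{R}_2^m f$, which carries the weight $|x|^{-(Q-2mp)}$, i.e. $\al = Q/p - 2m$, a direct computation gives $Q - 2p - p\al = 2p(m-1)$ and $Q + p'\al = p'(Q-2m)$, hence
\[
c_{p,\al} = \frac{(Q-2p-p\al)(Q+p'\al)}{pp'} = 2(m-1)(Q-2m) = a_{m-1,Q},
\]
independently of $p$. Multiplying these over the $k-1$ stages yields the prefactor $\big(\prod_{i=1}^{k-1} a_{i,Q}\big)^p$ in front of every term inherited from the base step, while the remainders produced at the $m$-th stage carry the corresponding partial product and, in the diagonal pieces, the factor $|c_{p,\al}|^{p-2}c_{p,\al} = a_{m-1,Q}^{p-1}$; this is exactly the bookkeeping recorded in \eqref{eq:criticaleven}. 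The same formula explains why one cannot iterate one step further: since $a_{0,Q} = 0$, the plain Rellich reduction degenerates at the level of $f$, and this is precisely the mechanism forcing the logarithmic correction supplied by Theorem \ref{criticalRellich}. The hypothesis $2 \le k < Q/2$ guarantees $a_{m-1,Q} > 0$ for all $m = 2,\dots,k$, so each intermediate application is legitimate.

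For \eqref{eq:criticalodd} I would first strip off the outer $\mathcal{R}$ via the weighted $L^p$-Hardy identity \eqref{eq:identityLpHardy} applied to $h = \mathcal{R}_2^k f$ with $\al = Q/p - (2k+1)$. Here $d_{p,\al} = (Q - p(1+\al))/p = 2k$, so \eqref{eq:identityLpHardy} writes $\int_{\mathbb{G}} |\mathcal{R}\mathcal{R}_2^k f|^p |x|^{-(Q-(2k+1)p)}\,dx$ as $(2k)^p \int_{\mathbb{G}} |\mathcal{R}_2^k f|^p |x|^{-(Q-2kp)}\,dx$ plus the single Hardy remainder $R_p\big(-2k\,\mathcal{R}_2^k f/|x|,\ \mathcal{R}\mathcal{R}_2^k f\big)$, and the weight $|x|^{-p(1+\al)} = |x|^{-(Q-2kp)}$ matches the left-hand side of \eqref{eq:criticaleven} exactly. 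Substituting \eqref{eq:criticaleven} then reproduces each term of \eqref{eq:criticalodd}, uniformly multiplied by $(2k)^p$, with the Hardy remainder appended as the final line.

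The main obstacle I anticipate is not conceptual but combinatorial: tracking which accumulated product of the $a_{i,Q}$ multiplies each remainder as the exponent $\al$, the weight, and the shift index $k-j$ all change simultaneously at every application of \eqref{eq:identityLpRellich}. Care must also be taken to justify invoking \eqref{eq:identityLpRellich} at each intermediate critical exponent, noting that it is a genuine equality valid for every real $\al$ (the restriction $1<p<Q/2$ in Theorem \ref{LpRellich} being needed only for the subsequent inequality), and to verify that the telescoping of weight exponents terminates precisely at the critical value $Q-2p$. Once the identity $c_{p,\,Q/p-2m} = a_{m-1,Q}$ is established, all the constants appearing in \eqref{eq:criticaleven}--\eqref{eq:criticalodd} follow mechanically.
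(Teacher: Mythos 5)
Your proposal is correct and takes essentially the same route as the paper: the paper obtains \eqref{eq:criticaleven} by applying \eqref{eq:Lpeven} to $g=\mathcal R_2 f$ with $l=k-1$ and then substituting \eqref{eq:identitycriticalR}, which is exactly your $k-1$-fold iteration of \eqref{eq:identityLpRellich} at the critical weights (since \eqref{eq:Lpeven} is itself proved by consecutive applications of \eqref{eq:identityLpRellich}), and it likewise derives \eqref{eq:criticalodd} from \eqref{eq:identityLpHardy} with $\al=(Q-(2k+1)p)/p$, where $d_{p,\al}=2k$. Your constant evaluation $c_{p,\,Q/p-2m}=a_{m-1,Q}$, and the observation that $a_{0,Q}=0$ forces the final critical step, correctly capture the bookkeeping.
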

\begin{proof}
We first prove \eqref{eq:criticaleven}. Denote $g = \mathcal R_2 f$. Applying \eqref{eq:Lpeven} to the function $g$ and $l =k-1$, and then using \eqref{eq:identitycriticalR}, we obtain \eqref{eq:criticaleven}.

We next prove \eqref{eq:criticalodd}. Using Theorem \ref{LpHardy} with $\al =(Q-(2k+1)p)/p$ we get
\begin{align*}
\int_{\mathbb G} \frac{|\mathcal R \mathcal R_2^k f|^p}{|x|^{Q-(2k+1)p}} dx&=(2k)^p\int_{\mathbb G} \frac{|\mathcal R_2^k f|^p}{|x|^{Q-2kp}} dx +  p\int_{\mathbb G} \frac1{|x|^{Q-(2k+1)p}}R_p\lt(-2k\frac{\mathcal R_2^k f}{|x|},\mathcal R \mathcal R_2^k f \rt) dx.
\end{align*}
Combining the previous equality together with \eqref{eq:criticaleven} implies \eqref{eq:criticalodd}. 
\end{proof}
By dropping the nonnegative remainder terms, Proposition \ref{criticalHRhigher} yields the following critical Hardy--Rellich type inequalities on $\mathbb G$ which are extensions of the critical Hardy inequality \eqref{eq:criticalHR} and the critical Rellich inequality \eqref{eq:criticalRellich} to higher order of derivatives.
\begin{theorem}\label{criticalhigher}
Let $\mathbb G$ be a homogeneous group of homogeneous dimension $Q$ and a homogeneous quasi-norm denoted by $|\cdot|$. Let $f \in C_0^\infty(\mathbb G\setminus\{0\})$ be any complex-valued function and denote $f_R(x) = f(R x/|x|)$ with $x\in \mathbb G$ and $R >0$. Then we have
\begin{equation}\label{eq:criticalRellicheven}
\lt(\frac{2^{k-1}(k-1)!}{p'}\prod_{i=0}^{k-1} (Q-2i-2)\rt)^p \sup_{R >0}\int_{\mathbb G} \frac{|f-f_R|^p}{|x|^Q \lt|\ln \frac R{|x|}\rt|^p} dx \leq \int_{\mathbb G} \frac{|\mathcal R_2^k f|^p}{|x|^{Q-2kp}} dx,
\end{equation}
for any $2 \leq k < Q/2$ and for any $p>1$, and 
\begin{equation}\label{eq:criticalRellichodd}
\lt(\frac{2^k k!}{p'}\prod_{i=0}^{k-1} (Q-2i-2)\rt)^p \sup_{R >0}\int_{\mathbb G} \frac{|f-f_R|^p}{|x|^Q \lt|\ln \frac R{|x|}\rt|^p} dx \leq \int_{\mathbb G} \frac{|\mathcal R \mathcal R_2^k f|^p}{|x|^{Q-(2k+1)p}} dx,
\end{equation}
for any $1\leq k \leq (Q-1)/2$ and for any $p>1$. Moreover, the inequalities \eqref{eq:criticalRellicheven} and \eqref{eq:criticalRellichodd} are sharp.
\end{theorem}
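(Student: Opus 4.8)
The two inequalities are read off directly from the equalities of Proposition \ref{criticalHRhigher}, so the heart of the matter is only a nonnegativity check and the sharpness. First I would fix $R>0$ and treat the even case \eqref{eq:criticaleven}. Under the hypothesis $2\le k<Q/2$ one has $a_{j,Q}=2j(Q-2j-2)>0$ for every $1\le j\le k-1$, since $Q-2j-2\ge Q-2k>0$. Hence every coefficient multiplying a remainder term on the right of \eqref{eq:criticaleven} is nonnegative, and each remainder is itself nonnegative: the terms of the form $R_p(\xi,\eta)$ are nonnegative by convexity of $z\mapsto|z|^p$, and the remaining terms are integrals of manifestly nonnegative quantities. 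Dropping all of them and keeping only the first term gives, for this fixed $R$,
\[
\lt(\frac{Q-2}{p'}\rt)^p\lt(\prod_{i=1}^{k-1}a_{i,Q}\rt)^p\int_{\mathbb G}\frac{|f-f_R|^p}{|x|^Q\lt|\ln\frac R{|x|}\rt|^p}dx\le\int_{\mathbb G}\frac{|\mathcal R_2^k f|^p}{|x|^{Q-2kp}}dx.
\]
Since the right-hand side does not depend on $R$, I would take the supremum over $R>0$ on the left.

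It then remains to identify the constant. Writing $a_{i,Q}=2i(Q-2i-2)$ gives $\prod_{i=1}^{k-1}a_{i,Q}=2^{k-1}(k-1)!\prod_{i=1}^{k-1}(Q-2i-2)$, so that $\frac{Q-2}{p'}\prod_{i=1}^{k-1}a_{i,Q}=\frac{2^{k-1}(k-1)!}{p'}\prod_{i=0}^{k-1}(Q-2i-2)$, which is precisely the constant in \eqref{eq:criticalRellicheven}. The odd case \eqref{eq:criticalRellichodd} is handled identically starting from \eqref{eq:criticalodd}, the extra factor $(2k)^p$ there turning $2^{k-1}(k-1)!$ into $2^k k!$.

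For the sharpness I would test with the family already used in the proof of Theorem \ref{criticalRellich}, namely $f_\delta(x)=\lt(\ln\frac R{|x|}\rt)^{1-\frac1p-\delta}\phi(|x|)$ for small $\delta>0$ and fixed large $R$, with $\phi$ the cut-off from the proof of Theorem \ref{identityRellich}. The plan is to compute the leading singular behaviour of $\mathcal R_2^k f_\delta$ as $r=|x|\to0$. A single application of $\mathcal R_2$ to a profile $r^{-m}(\ln\frac Rr)^{\beta}$ produces, up to terms with strictly lower powers of the logarithm and terms supported where $\phi'\ne0$ (which contribute only $O(1)$ to the relevant integrals), the leading term $-m(Q-m-2)\,r^{-m-2}(\ln\frac Rr)^{\beta}$ when $m\ge1$, while the first application to $f_\delta$ (the case $m=0$) lowers the logarithmic exponent by one and yields $-(Q-2)(1-\frac1p-\delta)\,r^{-2}(\ln\frac Rr)^{-\frac1p-\delta}$. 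Iterating, and using $-m(Q-m-2)=-a_{j,Q}$ at $m=2j$, I would obtain
\[
\mathcal R_2^k f_\delta\sim(-1)^k(Q-2)\lt(1-\tfrac1p-\delta\rt)\lt(\prod_{j=1}^{k-1}a_{j,Q}\rt)r^{-2k}\lt(\ln\tfrac Rr\rt)^{-\frac1p-\delta}
\]
near the origin. Substituting this into $\int_{\mathbb G}|\mathcal R_2^k f_\delta|^p|x|^{-(Q-2kp)}dx$, the exponents collapse to an integral $\sim\int_0^2 r^{-1}(\ln\frac Rr)^{-1-p\delta}dr\sim\frac1{p\delta}(\ln R)^{-p\delta}$ as $\delta\to0$; the same computation shows the critical integral $\int_{\mathbb G}|f_\delta|^p|x|^{-Q}|\ln\frac R{|x|}|^{-p}dx$ has identical leading asymptotics. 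Forming the quotient and letting $\delta\to0$ (so $1-\frac1p-\delta\to\frac1{p'}$) shows the ratio tends to $(\frac{Q-2}{p'}\prod_{j=1}^{k-1}a_{j,Q})^p$, the claimed constant, establishing the sharpness of \eqref{eq:criticalRellicheven}; the sharpness of \eqref{eq:criticalRellichodd} follows by testing the same $f_\delta$ against $\mathcal R\mathcal R_2^k$ and invoking Theorem \ref{LpHardy}.

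I expect the sharpness step to be the only real obstacle: the bulk of the work is the bookkeeping needed to verify that every subleading contribution (the lower logarithmic powers and the cut-off derivative terms) is genuinely $O(1)$, so that exactly one coefficient $(Q-2)\frac1{p'}\prod_{j=1}^{k-1}a_{j,Q}$ survives in the limit. Granting that, the inequalities themselves are an immediate consequence of Proposition \ref{criticalHRhigher} together with the convexity bound $R_p\ge0$.
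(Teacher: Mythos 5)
Your proposal is correct and follows essentially the same route as the paper: both derive \eqref{eq:criticalRellicheven} and \eqref{eq:criticalRellichodd} from Proposition \ref{criticalHRhigher} by dropping the nonnegative remainder terms (using $a_{i,Q}>0$ and $R_p\geq 0$), identify the constants via $\prod_{i=1}^{k-1}a_{i,Q}=2^{k-1}(k-1)!\prod_{i=1}^{k-1}(Q-2i-2)$ together with the extra factor $(2k)^p$ in the odd case, and test sharpness with the same family $f_\delta(x)=\left(\ln\frac{R}{|x|}\right)^{1-\frac1p-\delta}\phi(|x|)$. The only difference is that you carry out the iterated leading-order computation of $\mathcal R_2^k f_\delta$ explicitly, whereas the paper declares these "slightly tedious details" and skips them; your computation of the leading coefficient $(-1)^k(Q-2)\bigl(1-\tfrac1p-\delta\bigr)\prod_{j=1}^{k-1}a_{j,Q}$ and the matching $\frac1{p\delta}$-divergence of both integrals is exactly the verification the paper omits.
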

\begin{proof}
Obviously that $a_{i,Q} \geq 0$ for $1\leq i \leq k-1$, hence the inequalities \eqref{eq:criticalRellicheven} and \eqref{eq:criticalRellichodd} are immediate consequence of \eqref{eq:criticaleven} and \eqref{eq:criticalodd} respectively with the remark that
\[
\lt(\frac{Q-2}{p'}\rt)^p \lt(\prod_{i=1}^{k-1} a_{i,Q}\rt)^p = \lt(\frac{2^{k-1}(k-1)!}{p'}\prod_{i=0}^{k-1} (Q-2i-2)\rt)^p,
\]
and
\[
(2k)^p\lt(\frac{Q-2}{p'}\rt)^p \lt(\prod_{i=1}^{k-1} a_{i,Q}\rt)^p = \lt(\frac{2^k k!}{p'}\prod_{i=0}^{k-1} (Q-2i-2)\rt)^p.
\]

To verify the sharpness of \eqref{eq:criticalRellicheven} and \eqref{eq:criticalRellichodd}, we consider the test functions
\[
f_\de(x) = \lt(\ln \frac R {|x|}\rt)^{1-\frac1p -\de} \phi(|x|),
\]
as used in the proof of Theorem \ref{criticalRellich} and make the same computations. We skip the slightly tedious details.
\end{proof}

Theorem \ref{criticalhigher} implies the following uncertainly type principles
\begin{corollary}\label{uncertainly2}
Let $G$ be a homogeneous group of homogeneous dimension $Q$ and let $|\cdot|$ be any homogeneous quasi-norm on $G$. Let $k$ be a positive integer less than $Q$. Then for any $f \in C^\infty_0(\mathbb G\setminus\{0\})$, any $R >0$ and $p,q > 1$ such that $1/p + 1/q =1/2$, we have
\begin{equation}\label{eq:even1}
\frac{2^{l-1}(l-1)!}{p'}\lt(\prod_{i=0}^{l-1} (Q-2i-2)\rt)\lt(\int_{\mathbb G} \frac{|f|^2|f -f_R|^2}{|x|^{\frac {2Q}p} |\ln \frac R{|x|}|^2} dx\rt)^{\frac12} \leq \|f\|_{L^q(\mathbb G)}\lt(\int_{\mathbb G} \frac{|\mathcal R_2^l f|^p}{|x|^{Q-2lp}} dx\rt)^{\frac1p} 
\end{equation}
if $k=2l$, $l\geq 1$ and 
\begin{equation}\label{eq:odd1}
\frac{2^l l!}{p'}\lt(\prod_{i=0}^{l-1} (Q-2i-2)\rt)\int_{\mathbb G} \frac{|f|^2|f -f_R|^2}{|x|^{\frac {2Q}p} |\ln \frac R{|x|}|^2} dx \leq \|f\|_{L^q(\mathbb G)}\lt(\int_{\mathbb G} \frac{|\mathcal R \mathcal R_2^l f|^p}{|x|^{Q-(2l+1)p}} dx\rt)^{\frac1p}
\end{equation}
if $k =2l+1$, $l\geq 0$.

Also, we have for any complex-valued function $f \in C_0^\infty(\mathbb G \setminus\{0\})$
\begin{multline}\label{eq:even2}
\frac{2^{l-1}(l-1)!}{p'}\lt(\prod_{i=0}^{l-1} (Q-2i-2)\rt)\int_{\mathbb G} \frac{|f -f_R|^2}{|x|^{Q} |\ln \frac R{|x|}|^2} dx \\
\leq \lt(\int_{\mathbb G} \frac{|\mathcal R_2^l f|^p}{|x|^{Q-2lp}} dx\rt)^{\frac1p}\lt(\int_{\mathbb G} \frac{|f -f_R|^{p'}}{|x|^{Q} |\ln \frac R{|x|}|^{p'}} dx\rt)^{\frac1{p'}}
\end{multline}
if $k = 2l$, $l\geq 1$, and 
\begin{multline}\label{eq:odd2}
\frac{2^l l!}{p'}\lt(\prod_{i=0}^{l-1} (Q-2i-2)\rt)\int_{\mathbb G} \frac{|f -f_R|^2}{|x|^{Q} |\ln \frac R{|x|}|^2} dx \\
\leq \lt(\int_{\mathbb G} \frac{|\mathcal R \mathcal R_2^l f|^p}{|x|^{Q-(2l+1)p}} dx\rt)^{\frac1p}\lt(\int_{\mathbb G} \frac{|f -f_R|^{p'}}{|x|^{Q} |\ln \frac R{|x|}|^{p'}} dx\rt)^{\frac1{p'}}
\end{multline}
if $k = 2l+1$, $l\geq 0$.
\end{corollary}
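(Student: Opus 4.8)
The plan is to derive all four inequalities from the critical Hardy--Rellich inequalities \eqref{eq:criticalRellicheven} and \eqref{eq:criticalRellichodd} of Theorem \ref{criticalhigher} by a single application of H\"older's inequality in each case, handling the even cases $k=2l$ through \eqref{eq:criticalRellicheven} and the odd cases $k=2l+1$ through \eqref{eq:criticalRellichodd}. Since the statement fixes an arbitrary $R>0$, it suffices to use the critical inequalities for that same $R$: dropping the supremum over $R$ in Theorem \ref{criticalhigher} only strengthens the inequality in the direction we need, so the single-$R$ integral $\int_{\mathbb G}|f-f_R|^p/(|x|^Q|\ln \frac R{|x|}|^p)\,dx$ is bounded by the corresponding Rellich quantity divided by the sharp constant.

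First I would prove \eqref{eq:even1}. Taking the square root of the left-hand integrand and writing it as the product of $|f|$ and $|f-f_R|/(|x|^{Q/p}|\ln \frac R{|x|}|)$, the relation $\frac1q+\frac1p=\frac12$ lets me apply H\"older's inequality in $L^2$ with the conjugate pair $(q,p)$ to obtain
\[
\lt(\int_{\mathbb G} \frac{|f|^2|f-f_R|^2}{|x|^{2Q/p}|\ln \frac R{|x|}|^2}\,dx\rt)^{1/2} \leq \|f\|_{L^q(\mathbb G)}\lt(\int_{\mathbb G}\frac{|f-f_R|^p}{|x|^Q|\ln \frac R{|x|}|^p}\,dx\rt)^{1/p}.
\]
Bounding the last factor by \eqref{eq:criticalRellicheven} with the integer parameter equal to $l$ and then absorbing the constant $\frac{2^{l-1}(l-1)!}{p'}\prod_{i=0}^{l-1}(Q-2i-2)$ onto the left gives \eqref{eq:even1}. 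The inequality \eqref{eq:odd1} is obtained by the identical H\"older splitting, now invoking \eqref{eq:criticalRellichodd} with parameter $l$, whose sharp constant $\frac{2^l l!}{p'}\prod_{i=0}^{l-1}(Q-2i-2)$ is precisely the prefactor displayed on the left-hand side of \eqref{eq:odd1}.

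Next I would prove \eqref{eq:even2} and \eqref{eq:odd2}. Here the natural decomposition of $|f-f_R|^2/(|x|^Q|\ln \frac R{|x|}|^2)$ is into the two weighted copies $|f-f_R|/(|x|^{Q/p}|\ln \frac R{|x|}|)$ and $|f-f_R|/(|x|^{Q/p'}|\ln \frac R{|x|}|)$, whose exponents recombine correctly because $\frac1p+\frac1{p'}=1$. H\"older's inequality with the conjugate pair $(p,p')$ then yields
\[
\int_{\mathbb G}\frac{|f-f_R|^2}{|x|^Q|\ln \frac R{|x|}|^2}\,dx \leq \lt(\int_{\mathbb G}\frac{|f-f_R|^p}{|x|^Q|\ln \frac R{|x|}|^p}\,dx\rt)^{1/p}\lt(\int_{\mathbb G}\frac{|f-f_R|^{p'}}{|x|^Q|\ln \frac R{|x|}|^{p'}}\,dx\rt)^{1/p'}.
\]
Applying \eqref{eq:criticalRellicheven} (for $k=2l$) or \eqref{eq:criticalRellichodd} (for $k=2l+1$) with parameter $l$ to the first factor on the right and rearranging the constant then produces \eqref{eq:even2} and \eqref{eq:odd2} respectively.

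None of these steps presents a genuine obstacle: the entire corollary is a clean combination of H\"older's inequality with the critical inequalities already proved in Theorem \ref{criticalhigher}. The only points requiring care are verifying the two conjugacy relations $\frac1q+\frac1p=\frac12$ and $\frac1p+\frac1{p'}=1$ that license the respective applications of H\"older, and matching the two sharp constants $\frac{2^{l-1}(l-1)!}{p'}\prod_{i=0}^{l-1}(Q-2i-2)$ and $\frac{2^l l!}{p'}\prod_{i=0}^{l-1}(Q-2i-2)$ inherited from the even and odd critical Rellich inequalities.
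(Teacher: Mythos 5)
Your proposal is correct and is essentially the paper's own argument: the paper proves this corollary in a single line as a combination of Theorem \ref{criticalhigher} with H\"older's inequality, and your two splittings (H\"older with the conjugate pair $(q/2,p/2)$ for \eqref{eq:even1}--\eqref{eq:odd1} and with $(p,p')$ for \eqref{eq:even2}--\eqref{eq:odd2}, followed by the single-$R$ versions of \eqref{eq:criticalRellicheven} and \eqref{eq:criticalRellichodd}) fill in precisely those details, including the correct matching of the constants. The one bookkeeping point is that the boundary cases $l=1$ in the even inequalities and $l=0$ in the odd ones lie outside the ranges $k\geq 2$ and $k\geq 1$ stated in Theorem \ref{criticalhigher}, so there you should cite Theorem \ref{criticalRellich} and Proposition \ref{RScritical} instead, whose sharp constants coincide with the displayed prefactors.
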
 
\begin{proof}
These inequalities are consequence of Theorem \ref{criticalhigher} and H\"older inequality.
\end{proof}
Again, Corollary \ref{uncertainly2} contains a uncertainly type principle on the homogeneous group recently proved by Ruzhansky and Suragan \cite[Corollary $3.2$]{RS2016} corresponding to the case $k =1$. Corollary \ref{uncertainly2} provides an extension of their result to the higher order derivative.

\section{The inequalities in the Euclidean space}
We restrict ourselves in this section to the Euclidean space, i.e., the abelian case $\mathbb G = (\R^n,+)$ and $Q =n$. Let $|\cdot|$ denote the usual Euclidean norm on $\R^n$. In this case, $\mathcal R$ is exactly the derivative in the radial direction, i.e., $\pa_r$, and 
\[
\mathcal R_2 = \pa_r^2 + \frac{n-1}r \pa_r=: \Delta_r
\]
to be the radial Laplacian. In the sequel, we collect some inequalities obtained from previous sections when restricting to the Euclidean space $\R^n$.  Firstly, Theorem \ref{higherorderRellich} gives
\begin{align}\label{eq:evenE}
&\lt(\prod_{i=0}^{k-1} c_{2i+ \al}\rt)^2\int_{\mathbb R^n} \frac{|f|^2}{|x|^{4k+2\al}} dx \notag\\
&=\int_{\mathbb R^n} \frac{|\De_r^k f|^2}{|x|^{2\al}} dx -\int_{\mathbb R^n} \frac1{|x|^{2\al}} \lt|\De_r^k f + c_\al \frac{\De_r^{k-1}f}{|x|^2}\rt|^2 dx \notag\\
&\quad -\sum_{j=1}^{k-1}\lt(\prod_{i=0}^{j-1} c_{2i+ \al}\rt)^2 \int_{\mathbb R^n} \frac1{|x|^{4j+ 2\al}} \lt|\De_r^{k-j}f + c_{2j+\al} \frac{\De_r^{k-j-1}f}{|x|^2}\rt|^2 dx\notag\\
&\quad -2c_\al \int_{\mathbb R^n} \lt||x|^{1-\frac n2}\pa_r (|x|^{\frac{n-2\al-4}2}\De_r^{k-1}f)\rt|^2 dx\notag\\
&\quad -2 \sum_{j=1}^{k-1} \lt(\prod_{i=0}^{j-1} c_{2i+ \al}\rt)^2 c_{2j+\al} \int_{\mathbb R^n}\lt||x|^{1-\frac n2}\pa_r (|x|^{\frac{n-2\al-4-4j}2}\De_r^{k-j-1}f)\rt|^2 dx,
\end{align}
and
\begin{align}\label{eq:oddE}
&\lt(\frac{n-2-2\al}2 \prod_{i=0}^{k-1} c_{2i+1+ \al}\rt)^2\int_{\mathbb R^n}\frac{|f|^2}{|x|^{4k+2+ 2\al}} dx\notag\\
&=\int_{\mathbb R^n} \frac{|\pa_r(\De_r^k f)|^2}{|x|^{2\al}} dx -\int_{\mathbb R^n}  \lt||x|^{1-\frac n2}\pa_r(|x|^{\frac{n-2\al-2}2}\De_r^k f) \rt|^2 dx \notag\\
&\quad -\frac{(n-2-2\al)^2}4 \int_{\mathbb R^n} \frac1{|x|^{2+2\al}} \lt|\De_r^k f + c_{1+\al} \frac{\De_r^{k-1}f}{|x|^2}\rt|^2 dx \notag\\
&\quad -\frac{(n-2-2\al)^2}4\sum_{j=1}^{k-1} \lt(\prod_{i=0}^{j-1} c_{2i+ \al}\rt)^2\int_{\mathbb R^n} \frac1{|x|^{2+2\al+ 4j}} \lt|\De_r^{k-j} f + c_{2j+ 1+\al} \frac{\De_r^{k-j-1}f}{|x|^2}\rt|^2 dx \notag\\
&\quad -\frac{(n-2-2\al)^2}2 c_{1+\al}\int_{\mathbb R^n} \lt||x|^{1-\frac n2}\pa_r(|x|^{\frac{n-2\al -6}2}\De_r^{k-1} f)\rt|^2 dx\notag\\
&\quad -\frac{(n-2-2\al)^2}2\sum_{j=1}^{k-1}\lt(\prod_{i=0}^{j-1} c_{2i+ 1+\al}\rt)^2 c_{2j+1+\al}  \int_{\mathbb R^n} \lt||x|^{1-\frac n2}\pa_r (|x|^{\frac{n-2\al-6-4j}2}\De_r^{k-j-1} f)\rt|^2 dx
\end{align}
for any function $f\in C_0^\infty(\R^n\setminus\{0\})$. The equality \eqref{eq:oddE} with $k =\al =0$ was proved by Machihara, Ozawa and Wadade in \cite{MOW2016}. The equality \eqref{eq:evenE} with $k=1$ and $\al =0$ was proved by these same authors in \cite{MOW2017}. We believe that the other cases of \eqref{eq:evenE} and \eqref{eq:oddE} to be new. Evidently, \eqref{eq:evenE} and \eqref{eq:oddE} imply the following weighted $L^2$-Hardy--Rellich type inequalities
\begin{equation}\label{eq:HRevenE}
\lt(\prod_{i=0}^{k-1} \frac{(n+4i+2\al)(n-4i-2\al -4)}4\rt)^2\int_{\mathbb R^n} \frac{|f|^2}{|x|^{4k+2\al}} dx \leq \int_{\mathbb R^n} \frac{|\De_r^k f|^2}{|x|^{2\al}} dx
\end{equation}
for $n \geq 4k+1$, $\al \in (-n/2, (n-4k)/2)$, and
\begin{multline}\label{eq:HRoddE}
\lt(\frac{n-2-2\al}2 \prod_{i=0}^{k-1} \frac{(n+4i+2+2\al)(n-4i-6-2\al)}4\rt)^2\int_{\mathbb R^n}\frac{|f|^2}{|x|^{4k+2+ 2\al}} dx \\
\leq \int_{\mathbb R^n} \frac{|\pa_r(\De_r^k f)|^2}{|x|^{2\al}} dx
\end{multline}
for $n\geq 4k+3$, $\al \in (-(n+2)/2, (n-4k-2)/2)$. It is easy to see that $|\pa_r f|\leq |\na f|$. This together with \eqref{eq:HRoddE} gives a simple proof of the classical $L^2$-Hardy inequality \eqref{eq:L2HardyE}.

For $1\leq j \leq n$, we denote by $L_j$ a spherical derivative, i.e.,
\[
L_j = \pa_j -\frac{x_j}{|x|} \pa_r = \pa_j -\sum_{k=1}^n\frac{x_j x_k \pa_k}{|x|^2}.
\]
It was proved in \cite{MOW2017} that 
\begin{align}\label{eq:MOW}
\|\De f\|_{L^2(\R^n)}^2&= \|\De_r f\|_{L^2(\R^n)}^2 + \lt\|\sum_{j=1}^n L_j^2 f\rt\|_{L^2(\R^n)}^2 + \frac{n(n-4)}2\sum_{j=1}^n \lt\| \frac{1}{|x|}L_jf\rt\|_{L^2(\R^n)}^2 \notag\\
&\quad + 2 \sum_{j=1}^n \lt\|\pa_r L_j f + \frac{n-2}{|x|} L_jf\rt\|_{L^2(\R^n)}^2,
\end{align}
for $n\geq 5$ which implies $\|\De_r f\|_{L^2(\R^n)} \leq \|\De f\|_{L^2(\R^n)}$ with equality holds if and only if $f$ is radial function. Again, this estimate together with \eqref{eq:HRevenE} gives a simple proof of the classical $L^2-$Rellich inequality \eqref{eq:RellichE}. 

The next result gives an expression of $\|\na \De f\|_{L^2(\R^n)}^2$ in the spirit of \eqref{eq:MOW}.

\begin{theorem}\label{Energyidentity}
Let $n\geq 7$. Then the following equality holds for all function $f\in H^3(\R^n)$:
\begin{align}\label{eq:energyidentity}
\int_{\R^n} |\na \Delta f|^2 dx&= \int_{\R^n} \lt| \pa_r\Delta_r f\rt|^2 dx + \int_{\R^n} \lt|\pa_r\lt(\sum_{j=1}^n L_j^2 f\rt)\rt|^2 dx + \sum_{j=1}^n \int_{\R^n} \lt|L_j \Delta f\rt|^2 dx\notag\\
&\qquad +\lt(\frac{n^2-24}2 \frac{(n-4)^2}4  + 2(n-3)(n-7)\rt) \sum_{j=1}^n\int_{\R^n} \frac{|L_j f|^2}{|x|^4} dx \notag\\
&\qquad + \frac{n^2-24}2 \sum_{j=1}^n\int_{\R^n} \lt||x|^{1-\frac n2} \pa_r \lt(|x|^{\frac n2-2} L_j f\rt)\rt|^2 dx\notag\\
&\qquad +2\sum_{j=1}^n\int_{\R^n} \lt| |x|^{1-\frac n2} \pa_r(|x|^{\frac n2 -1} \pa_r L_jf)\rt|^2 dx.
\end{align}
As a consequence, we obtain
\begin{equation}\label{eq:compare}
\int_{\R^n} |\na \Delta f|^2 dx \geq \int_{\R^n} \lt|\pa_r \Delta_r f \rt|^2 dx,
\end{equation}
with equality holds if and only if $f$ is radial function.
\end{theorem}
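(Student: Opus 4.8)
The plan is to combine the orthogonal splitting of the full gradient into a radial and a spherical part with the angular decomposition of the Laplacian, and then to reduce the single cross term that survives to the weighted Hardy identity of Theorem \ref{identityHardy}. First I would record the pointwise decomposition $\na g = \frac{x}{|x|}\pa_r g + (L_1 g,\dots,L_n g)$, and observe that the radial field $\frac{x}{|x|}$ is orthogonal to the spherical one because $\sum_{j}\frac{x_j}{|x|}L_j g = \pa_r g-\pa_r g = 0$; hence for every scalar $g$ one has $|\na g|^2=|\pa_r g|^2+\sum_{j=1}^n|L_j g|^2$ pointwise. Applying this with $g=\De f$ and inserting the angular decomposition $\De f=\De_r f+\sum_{k=1}^n L_k^2 f$ (the identity underlying \eqref{eq:MOW}, equivalently $\sum_k L_k^2=|x|^{-2}\De_{S^{n-1}}$), I get
\[
|\na \De f|^2 = \Big|\pa_r \De_r f + \pa_r\Big(\sum_{k=1}^n L_k^2 f\Big)\Big|^2 + \sum_{j=1}^n |L_j \De f|^2.
\]
Integrating over $\R^n$, the squares of the two radial summands give the first two terms on the right of \eqref{eq:energyidentity}, namely $\int|\pa_r\De_r f|^2$ and $\int|\pa_r(\sum_k L_k^2 f)|^2$, while the spherical part gives $\sum_j\int|L_j\De f|^2$.

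The heart of the matter is then the cross term $2\,\Re\int_{\R^n}\pa_r\De_r f\;\overline{\pa_r\big(\sum_k L_k^2 f\big)}\,dx$. Here I would pass to polar coordinates $x=r\om$ and use that $\pa_r$ commutes with the angular Laplacian $\De_{S^{n-1}}$, so the only $r$-dependence of $\sum_k L_k^2$ sits in the prefactor $|x|^{-2}$. Integrating by parts on the sphere by means of the formal adjoint relation $L_j^\ast=-L_j+(n-1)x_j|x|^{-2}$ turns the angular factor acting on one function into $\sum_j L_j$ acting on both, and the first-order correction in $L_j^\ast$ is precisely what produces the $n$-dependent constants. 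After a further radial integration by parts (accounting for the powers of $r$ and the commutator $[\pa_r,\De_r]$), the cross term collapses into a sum over $j$ of purely radial integrals built from $L_j f$ and $\pa_r L_j f$. At this point I would invoke Theorem \ref{identityHardy} with $Q=n$, twice: applied to $\pa_r L_j f$ with $\al=0$ it produces the completed square $|\,|x|^{1-\frac n2}\pa_r(|x|^{\frac n2-1}\pa_r L_j f)|^2=|\pa_r(\pa_r L_j f)+\tfrac{n-2}{2|x|}\pa_r L_j f|^2$ with leading constant $\big(\tfrac{n-2}{2}\big)^2$, and applied to $L_j f$ with $\al=1$ it produces $|\,|x|^{1-\frac n2}\pa_r(|x|^{\frac n2-2}L_j f)|^2$ with leading constant $\big(\tfrac{n-4}{2}\big)^2$. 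Collecting the leading Hardy constants into the coefficient of $\sum_j\int|L_j f|^2/|x|^4$ yields exactly $\tfrac{n^2-24}{2}\tfrac{(n-4)^2}{4}+2(n-3)(n-7)$, while the completed squares carry the coefficients $\tfrac{n^2-24}{2}$ and $2$ of the last two lines of \eqref{eq:energyidentity}.

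The main obstacle is this cross-term computation: tracking the several spherical and radial integrations by parts and the adjoint corrections so that the coefficients come out exactly as stated. I expect the hypothesis $n\ge 7$ to enter here both to guarantee convergence of the $|x|^{-4}$-weighted integrals for $f\in H^3(\R^n)$ and to make the coefficient $\tfrac{n^2-24}{2}\tfrac{(n-4)^2}{4}+2(n-3)(n-7)$ nonnegative (indeed $n^2-24>0$ and $(n-3)(n-7)\ge 0$ for $n\ge 7$). Finally, for \eqref{eq:compare}, every term on the right of \eqref{eq:energyidentity} other than $\int|\pa_r\De_r f|^2$ is nonnegative for $n\ge 7$, so discarding them gives the inequality; equality forces $\sum_j\int|L_j f|^2/|x|^4=0$, hence $L_j f=0$ for all $j$, i.e. $f$ is radial, and conversely a radial $f$ satisfies $\na\De f=\frac{x}{|x|}\pa_r\De_r f$, so equality holds precisely for radial functions.
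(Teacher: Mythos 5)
Your proposal is correct and follows essentially the same route as the paper's own proof: the pointwise orthogonal splitting $|\na g|^2=|\pa_r g|^2+\sum_{j=1}^n|L_j g|^2$ applied to $g=\De f$, the decomposition $\De=\De_r+\sum_j L_j^2$, and the reduction of the single surviving cross term, via radial and spherical integrations by parts (the paper organizes these through the commutator identities $\pa_r L_j=L_j\pa_r-L_j/|x|$ and $\De_r L_j=L_j\De_r-\frac{2}{|x|}L_j\pa_r-\frac{n-3}{|x|^2}L_j$ together with $\sum_j x_jL_j=0$, rather than your $|x|^{-2}\De_{S^{n-1}}$ bookkeeping, but the two are equivalent), to precisely the two weighted Hardy identities you cite --- Theorem \ref{identityHardy} with $\al=0$ applied to $\pa_r L_jf$ (equivalently Theorem \ref{onetwo}, which is what the paper invokes) and with $\al=1$ applied to $L_jf$ --- yielding exactly the completed squares and the coefficient $\frac{n^2-24}{2}\frac{(n-4)^2}{4}+2(n-3)(n-7)$ you state. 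Your handling of the equality case (strict positivity of that coefficient for $n\ge 7$ forces $L_jf=0$ for all $j$, hence $f$ radial, with the converse immediate) also matches the paper, so the only thing separating your proposal from the paper's proof is the explicit execution of the cross-term bookkeeping, whose mechanism and outcome you correctly identified.
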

Combining \eqref{eq:compare} with \eqref{eq:HRoddE} for $k=1$, we obtain a simple proof of the Rellich type inequality for $n\geq 7$
\[
\frac{(n-6)^2(n-2)^2 (n+2)^2}{64} \int_{\R^n}\frac{|f|^2}{|x|^6} dx \leq \int_{\R^n} |\na \De f|^2 dx
\]
which is a special case of \eqref{eq:higherRellichEodd} with $m=1$. It would be interesting to find an analogues of Theorem \ref{Energyidentity} for the higher order derivatives. Such a result combining with \eqref{eq:HRevenE} or \eqref{eq:HRoddE} would give a simple proof of the Hardy--Rellich type inequalities \eqref{eq:higherRellichEeven} and \eqref{eq:higherRellichEodd}. Let us go to the proof of Theorem \ref{Energyidentity}.
\begin{proof}[Proof of Theorem \ref{Energyidentity}]
It is enough to prove \eqref{eq:energyidentity} for function $f\in C_0^\infty(\R^n\setminus\{0\})$ by the density argument. Notice that 
\[
\Delta = \Delta_r + \sum_{j=1}^n L_j^2,
\]
which can be verified by some simple computations. Expanding the scalar product, we have
\begin{align}\label{eq:fi00}
\int_{\R^n} |\na \Delta f|^2 dx &= \int_{\R^n} |\pa_r \Delta f|^2 dx + \sum_{j=1}^n \int_{\R^n} |L_j \Delta f|^2 dx\notag\\
&= \int_{\R^n} \lt|\pa_r\Delta_r f + \pa_r\lt(\sum_{j=1}^n L_j^2 f\rt)\rt|^2 dx + \sum_{j=1}^n \int_{\R^n} |L_j \Delta f|^2 dx\notag\\
&= \int_{\R^n} \lt|\pa_r\Delta_r f\rt|^2 dx + \int_{\R^n} \lt|\pa_r\lt(\sum_{j=1}^n L_j^2 f\rt)\rt|^2 dx \notag\\
&\qquad + 2 {\rm Re} \int_{\R^n} \lt(\pa_r\Delta_r f\rt) \overline{\pa_r\lt(\sum_{j=1}^n L_j^2 f\rt)} dx + \sum_{j=1}^n \int_{\R^n} |L_j \Delta f|^2 dx.
\end{align}
Our next goal is to compute
\[
{\rm Re}\int_{\R^n} \lt(\pa_r\Delta_r f\rt) \overline{\pa_r\lt(\sum_{j=1}^n L_j^2 f\rt)} dx.
\]
Denote $h_j = L_j f, j =1,2,\ldots,n$ and $ g = \Delta_r f$. Using integration by parts, we have

\begin{align}\label{eq:IbP***}
\int_{\R^n} \pa_r g \overline{\pa_r\lt(\sum_{j=1}^n L_j^2 f\rt)} dx&= -\int_{\R^n} g \o{\Delta_r \lt(\sum_{j=1}^n L_j^2 f\rt)} dx.
\end{align}

We can readily check that $\pa_r L_j = L_j\pa_r - L_j/|x|$, and
\[
\De_r L_j = L_j \Delta_r - \frac{2}{|x|}L_j \pa_r - \frac{n-3}{|x|^2} L_j,
\]
Since $\sum_{j=1}^n x_j L_j =0,$ then 
\[
\sum_{j=1}^n x_j \Delta_r L_j = \sum_{j=1}^n x_j \pa_r L_j = 0.
\]
Using integration by parts and the fact $L_j(|x|^a u) =|x|^a L_j u$, we get
\begin{align*}
\int_{\R^n} g \o{\Delta_r \lt(\sum_{j=1}^n L_j^2 f\rt)} dx&= \sum_{j=1}^n \int_{\R^n} g \o{\lt(L_j \Delta_r h_j -\frac{2}{|x|}L_j\pa_r h_j -\frac{n-3}{|x|^2}L_j h_j\rt)} dx\\
&= -\sum_{j=1}^n \int_{\R^2} L_j g \o{\lt(\Delta_r h_j -\frac2{|x|}\pa_r h_j - \frac{n-3}{|x|^2} h_j\rt)} dx.
\end{align*}
Notice that $L_j \Delta_r = \Delta_r L_j + 2\pa_r L_j/|x| + (n-1) L_j/|x|^2$, hence
\[
L_j g=L_j \Delta_r f = (\Delta_r L_j + \frac2{|x|} \pa_r L_j + \frac{n-1}{|x|^2} L_j) f = \Delta_r h_j + \frac2{|x|}\pa_r h_j + \frac{n-1}{|x|^2} h_j.
\]
This expression of $L_j g$ implies 
\begin{align*}
{\rm Re}&\int_{\R^n} g \o{\Delta_r \lt(\sum_{j=1}^n L_j^2 f\rt)} dx\\
&=\sum_{j=1}^n{\rm Re} \int_{\R^n} \lt(\Delta_r h_j + \frac2{|x|}\pa_r h_j + \frac{n-1}{|x|^2} h_j\rt)\o{\lt(\Delta_r h_j -\frac2{|x|}\pa_r h_j - \frac{n-3}{|x|^2} h_j\rt)} dx\\
&=\sum_{j=1}^n\int_{\R^n} \lt(|\Delta_r h_j|^2 - \frac4{|x|^2} |\pa_r h_j|^2 - \frac{(n-1)(n-3)}{|x|^4} |h_j|^2\rt) dx + 2\sum_{j=1}^n {\rm Re} \int_{\R^n} \frac{\Delta_r h_j \o{h_j}}{|x|^2} dx\\
&\qquad\qquad -4(n-2) \sum_{j=1}^n{\rm Re} \int_{\R^n} \frac{\pa_r h_j \o{h_j}}{|x|^3} dx\\
&= \sum_{j=1}^n\int_{\R^n} \lt(|\Delta_r h_j|^2 - \frac4{|x|^2} |\pa_r h_j|^2 - \frac{(n-1)(n-3)}{|x|^4} |h_j|^2\rt) dx + 2\sum_{j=1}^n{\rm Re} \int_{\R^n} \frac{\pa_r^2 h_j \o{h_j}}{|x|^2} dx\\
&\qquad\qquad -2(n-3)\sum_{j=1}^n {\rm Re} \int_{\R^n} \frac{\pa_r h_j \o{h_j}}{|x|^3} dx.
\end{align*}
Using integration by parts, we obtain
\begin{align*}
{\rm Re}\int_{\R^n} g \o{\Delta_r \lt(\sum_{j=1}^n L_j^2 f\rt)} dx&=\sum_{j=1}^n\int_{\R^n} \lt(|\Delta_r h_j|^2 - \frac6{|x|^2} |\pa_r h_j|^2 - \frac{(n-1)(n-3)}{|x|^4} |h_j|^2\rt) dx \\
&\qquad\qquad\qquad -4(n-3)\sum_{j=1}^n {\rm Re} \int_{\R^n} \frac{\pa_r h_j \o{h_j}}{|x|^3} dx\\
&=\sum_{j=1}^n\int_{\R^n} \lt(|\Delta_r h_j|^2 - \frac6{|x|^2} |\pa_r h_j|^2 - \frac{(n-1)(n-3)}{|x|^4} |h_j|^2\rt) dx \\
&\qquad\qquad\qquad + 2(n-3)(n-4)\sum_{j=1}^n \int_{\R^n} \frac{|h_j|^2}{|x|^4} dx\\
&=\sum_{j=1}^n \int_{\R^n} \lt(|\Delta_r h_j|^2 - \frac6{|x|^2} |\pa_r h_j|^2 + \frac{(n-3)(n-7)}{|x|^4} |h_j|^2\rt) dx.
\end{align*}
Applying Theorem \ref{onetwo} to $\mathbb G = (\R^n,+)$ with the Euclidean norm and the integral of $|\Delta_r h_j|^2$, we obtain
\begin{align}\label{eq:oxoxox}
{\rm Re}\int_{\R^n} g \o{\Delta_r \lt(\sum_{j=1}^n L_j^2 f\rt)} dx&= -\frac{n^2-24}4 \sum_{j=1}^n\int_{\R^n} \frac{|\pa_r h_j|^2}{|x|^2} dx -(n-3)(n-7)\sum_{j=1}^n \int_{\R^n} \frac{|h_j|^2}{|x|^4} dx \notag\\
&\qquad - \sum_{j=1}^n\int_{\R^n} \lt| |x|^{1-\frac n2} \pa_r(|x|^{\frac n2 -1} \pa_r h_j)\rt|^2 dx.
\end{align}
Using \eqref{eq:oddE} for $k=0$, $\al =1$, we have
\begin{equation}\label{eq:ixixix}
\int_{\R^n} \frac{|\pa_r h_j|^2}{|x|^2} dx = \frac{(n-4)^2}4 \int_{\R^n} \frac{|h_j|^2}{|x|^4} dx + \int_{\R^n} \lt||x|^{1-\frac n2}\pa_r (|x|^{\frac n2 -2} h_j)\rt|^2 dx.
\end{equation}
Plugging \eqref{eq:ixixix}, \eqref{eq:oxoxox} and \eqref{eq:IbP***} into \eqref{eq:fi00} implies our desired equality \eqref{eq:energyidentity}.

The inequality \eqref{eq:compare} is an immediate consequence of \eqref{eq:energyidentity} since $n\geq 7$. Moreover, if $f$ is radial then equality holds since $L_j f =0$ for all $j =1,2,\ldots,n$. Conversely, suppose that equality holds in \eqref{eq:compare}. Since
\[
\frac{n^2-24}4 \frac{(n-4)^2}4 + 2(n-3)(n-7) >0,
\]
for any $n\geq 7$, then we must have $L_j f =0$ for all $j=1,2,\ldots,n$. In particular, we get
\[
x_k \pa_j f(x) = \frac{x_k x_j}{|x|} \pa_r f(x) = x_j \pa_k f(x),
\]
for any $j,k =1,2,\ldots,n$. This fact implies that $f$ is radial function.
\end{proof}

Next, Theorem \ref{eq:HRhigher} gives the following weighted $L^p-$Hardy--Rellich type inequalities on $\R^n$.
\begin{equation}\label{eq:HRLpevenE}
\lt(\prod_{i=0}^{k-1}\frac{n+p'(2i+\al))(n -p(2i+2+\al))}{pp'}\rt)^p\int_{\mathbb R^n} \frac{|f|^p}{|x|^{p(2k+\al)}} dx \leq \int_{\mathbb R^n} \frac{|\De_r^k f|^p}{|x|^{p\al}} dx,
\end{equation}
if $1 < p < n/(2k)$ and $\al \in (-n(p-1)/p,(n-2pk)/p)$, and 
\begin{multline}\label{eq:HRLpoddE}
\frac{(n-p(1+\al))^p}{p^p} \lt(\prod_{i=0}^{k-1}\frac{n+p'(2i+1+\al))(n-p(2i+3+\al))}{pp'}\rt)^p\int_{\mathbb R^n} \frac{|f|^p}{|x|^{p(2k+1+\al)}} dx \\
\leq \int_{\mathbb R^n} \frac{|\pa_r \De_r^k f|^p}{|x|^{p\al}} dx
\end{multline}
if $1< p< n/(2k+1)$ and $\al \in (-(n+p')/p' , (n-p(2k+1))/p)$. Similarly, Theorem \ref{onetwoLp} gives the following weighted $L^p$-Rellich type inequality
\begin{equation}\label{eq:newLpRellichE}
\frac{|n+p'\al|^p}{(p')^p} \int_{\mathbb R^n} \frac{|\pa_r f|^p}{|x|^{p(1+\al)}} dx \leq \int_{\mathcal R^n} \frac{|\De_r f|^p}{|x|^{p\al}} dx,
\end{equation}
for any $\al \in \R$. The inequalities \eqref{eq:HRLpevenE}, \eqref{eq:HRLpoddE} and \eqref{eq:newLpRellichE} seem to be new in the Euclidean space $\R^n$.

Finally, Theorem \ref{criticalRellich} and  Theorem \ref{criticalhigher} give some new critical Hardy--Rellich type inequalities in $\R^n$ as follows
\begin{equation}\label{eq:criticalRellichevenE}
\lt(\frac{2^{k-1}(k-1)!}{p'}\prod_{i=0}^{k-1} (n-2i-2)\rt)^p \sup_{R >0}\int_{\mathbb R^n} \frac{|f-f_R|^p}{|x|^n \lt|\ln \frac R{|x|}\rt|^p} dx \leq \int_{\mathbb R^n} \frac{|\De_r^k f|^p}{|x|^{n-2kp}} dx,
\end{equation}
for any $1\leq k < n/2$ and any $p>1$, and 
\begin{equation}\label{eq:criticalRellichoddE}
\lt(\frac{2^k k!}{p'}\prod_{i=0}^{k-1} (n-2i-2)\rt)^p \sup_{R >0}\int_{\mathbb R^n} \frac{|f-f_R|^p}{|x|^n \lt|\ln \frac R{|x|}\rt|^p} dx \leq \int_{\mathbb R^n} \frac{|\pa_r \De_r^k f|^p}{|x|^{n-(2k+1)p}} dx,
\end{equation}
for any $0\leq k < (n-2)/2$ and any $p>1$.

The inequality \eqref{eq:criticalRellichoddE} was first proved by Ioku, Ishiwata and Ozawa \cite{IIO2016} in the case $k=0$ and $p=n$ and then was extended by Ruzhansky and Suragan \cite{RS2016} for any $p\in (1,\infty)$. Especially, consider the case $p=2$ then \eqref{eq:criticalRellichevenE} and \eqref{eq:criticalRellichoddE} become
\begin{equation}\label{eq:cevenE}
\lt(2^{2k-2}(2k-1)!\rt)^2 \sup_{R >0}\int_{\mathbb R^n} \frac{|f-f_R|^2}{|x|^n \lt|\ln \frac R{|x|}\rt|^2} dx \leq \int_{\mathbb R^n} |\De_r^k f|^2 dx,
\end{equation}
if $n=4k$ and 
\begin{equation}\label{eq:coddE}
\lt(2^{2k-1} (2k)!\rt)^2 \sup_{R >0}\int_{\mathbb R^n} \frac{|f-f_R|^2}{|x|^n \lt|\ln \frac R{|x|}\rt|^2} dx \leq \int_{\mathbb R^n} |\pa_r \De_r^k f|^2 dx,
\end{equation}
if $n=4k+ 2$, respectively. As consequences of \eqref{eq:cevenE}, \eqref{eq:coddE}, \eqref{eq:MOW} and \eqref{eq:compare}, we obtain the following critical Rellich inequality in $\R^4$ and $\R^6$,
\begin{equation}\label{eq:dim4a}
\sup_{R >0}\int_{\mathbb R^4} \frac{|f-f_R|^2}{|x|^4 \lt|\ln \frac R{|x|}\rt|^2} dx \leq \int_{\mathbb R^4} |\De f|^2 dx,\quad\forall \, f \in C_0^\infty(\R^4)
\end{equation}
and 
\begin{equation}\label{eq:dim6}
16\sup_{R >0}\int_{\mathbb R^6} \frac{|f-f_R|^2}{|x|^6 \lt|\ln \frac R{|x|}\rt|^2} dx \leq \int_{\mathbb R^6} |\na \De f|^2 dx,\quad\forall \, f \in C_0^\infty(\R^6)
\end{equation}

Let us give some comments on our critical Rellich inequalities comparing with the ones of Adimurthi and Santra, i.e. \eqref{eq:ASLp}, \eqref{eq:ASL2}, \eqref{eq:ASchan} and \eqref{eq:ASle}. Note that the inequality \eqref{eq:ASL2} can be extended to any function $ f\in H_0^2(\Om)$ as mentioned in \cite{AS2009}. Clearly, this inequality \eqref{eq:ASL2} is implied from our inequality \eqref{eq:dim4a}. For the case $n=4k$, $k\geq 2$, it can be checked that 
\[
2^{2k-2} (2k-1)! \geq \frac{n}4 \frac1{2^{2k-2}}\prod_{i=0}^{k-2}(4i+2)(8k-4i-6)= k\prod_{i=0}^{k-2} (2i+1) (4k-2i-3).
\]
Hence \eqref{eq:cevenE} improves the inequality \eqref{eq:ASchan} with $n=4k$, $k \geq 2$ since for any radial function $f$ it holds $\De^k f = \De_r^k f$. For the case $n = 4k+2$, $k\geq 1$ we can check that
\[
\frac{2k+1}2 \prod_{i=0}^{k-1} (2k-2i-1) (2k+2i+1) > 2^{2k-1} (2k)!.
\]
Hence the inequality \eqref{eq:ASle} seems to be stronger than the inequality \eqref{eq:coddE}. However, by testing the function $f_\de(x) = (\ln(R/|x|))^{1/2 -\de} \phi(x)$ where $\phi$ is a radial function in $C^\infty_0(\R^n)$ such that $0\leq \phi\leq 1$, $\phi(x) =1$ if $|x|\leq 1$ and $\phi(x) =0$ if $|x|\geq 2$, and $\de >0$ small enough and $R >2$, we see that the inequality \eqref{eq:coddE} is sharp. Thus, our inequality \eqref{eq:coddE} corrects the inequality \eqref{eq:ASle} of Adimurthi and Santra. Moreover, the inequality \eqref{eq:coddE} extends the inequality \eqref{eq:ASle} to all functions (not need to be radial) in the case $n =6$ by \eqref{eq:dim6}.

\section*{Acknowledgments}
This work was supported by the CIMI's postdoctoral research fellowship.

\end{document}